\newcommand{\bluefootnote}[1]{
\relax
}
\DeclareRobustCommand{\note}[1]{%
\relax
}
\newcommand{\defnfont}[1]{\textbf{#1}}
\renewcommand{\theenumi}{\roman{enumi}}
\renewcommand{\p@enumii}{\theenumi.}
\title[Rank-Finiteness for Modular Categories]{Rank-Finiteness for Modular Categories}
\author{Paul Bruillard}
\email{pjb2357@gmail.com}
\address{Department of Mathematics,
    Texas A\&M University,
    College Station, TX
    U.S.A.}
\curraddr{Pacific Northwest National Laboratory, 902 Battelle Boulevard,
Richland, WA U.S.A}
\author{Siu-Hung Ng}
\email{rng@math.lsu.edu}
\address{Department of Mathematics, Louisiana State University, Baton Rouge, LA
    U.S.A.}
\thanks{The first, third and fourth authors were partially supported by
NSF grant DMS1108725, and the second author was partially supported by NSF grants
DMS1001566, DMS1303253, and DMS1501179. This project began at a workshop at the American Institute of Mathematics, and was completed while E.R. was visiting the Beijing International Center for Mathematical Research, Peking University--the support and hospitality of both institutions are gratefully acknowledged. It is a pleasure to thank Vladimir Turaev and Pavel Etingof for valuable comments.}
\author{Eric C. Rowell}
\email{rowell@math.tamu.edu}
\address{Department of Mathematics,
    Texas A\&M University,
    College Station, TX
    U.S.A.}
\author{Zhenghan Wang}
\email{zhenghwa@microsoft.com}
\address{Microsoft Research Station Q and Department of Mathematics,
    University of California,
    Santa Barbara, CA
    U.S.A.}
\keywords{Modular categories, Cauchy theorem, Frobenius-Schur indicator}
\date{\today}
\dedicatory{}
\begin{document}
\raggedbottom
\begin{abstract}
  We prove a rank-finiteness conjecture for modular categories: up to equivalence, there are
   only finitely many modular categories of any fixed rank.  Our technical advance is a generalization of the Cauchy theorem in
  group theory to the context of spherical fusion categories.  For a modular category $\CC$ with $N=\ord(T)$, the order of the modular $T$-matrix, the Cauchy theorem says that the set of primes dividing the global quantum
  dimension $D^2$ in the Dedekind domain $\mathbb{Z}[e^{\frac{2\pi i}{N}}]$ is identical
  to that of $N$.
\end{abstract}

\maketitle

\section{Introduction}
  \label{Introduction}
Modular categories are intricate organizing algebraic structures appearing in a
  variety of mathematical subjects including topological quantum field theory
  \cite{Tu1}, conformal field theory \cite{MS1}, representation theory of
  quantum groups \cite{BKi}, von Neumann algebras \cite{EK1}, and vertex operator
  algebras \cite{Hu1}.  They are fusion categories with additional braiding and
  pivotal structures \cite{ENO1,Tu1,BKi} satisfying a non-degeneracy condition.  These extra structures endow them with some
  \lq\lq abelian-ness" which makes the theory of modular categories easier.

Besides the intrinsic mathematical aesthetics, another motivation for studying
 modular categories comes from their application in
  condensed matter physics and quantum computing \cite{W1, W2}.  Unitary
  modular categories are algebraic models of anyons in two dimensional
  topological phases of matter (where simple objects model anyons).  In topological
  quantum computation,  anyons give rise to quantum computational models.
  Modular categories  have also been used recently to construct physically
  realistic three dimensional topological insulators and superconductors
  \cite{WW1, BCFV1}.  Therefore, modular categories form part of the mathematical foundations of topological quantum computation.

A modular category $\mathcal{C}$ over $\mathbb{C}$ is a non-degenerate braided spherical fusion category over $\mathbb{C}$
  \cite{Tu1, BKi}.  A \defnfont{fusion category} $\CC$ over $\mathbb{C}$ is an abelian $\BC$-linear semisimple rigid
  monoidal category with a simple unit object $\1$, finite-dimensional morphism spaces and finitely many isomorphism classes of simple objects.  The label set $\Pi_\CC$ of $\CC$ is the set of isomorphism classes of simple objects of the modular category $\CC$. The \defnfont{rank} of $\CC$ is the finite number $r=|\Pi_\CC|$.  Each modular category $\mathcal{C}$ leads to a $(2+1)$-dimensional
  topological  quantum field theory and, in particular, colored
  oriented framed link invariants \cite{Tu1}.  The invariant $d_i$ for the circle
  colored by a label $i\in \Pi_\CC$ is called the \defnfont{quantum
  dimension} of the label $i$, and the global quantum dimension $D^2=\sum_{i\in \Pi_\CC}d_i^2$  is an important invariant of
  $\mathcal{C}$.  The invariant of a Hopf
  link colored by labels $i,j$ will be denoted as $S_{ij}$, which forms the unnormalized modular $S$-matrix $S=(S_{ij}), i,j\in \Pi_\CC$. The invariant of the circle with a right-handed
  kink colored by a label $i$ is $\theta_i\cdot d_i$ for some root of unity $\theta_i$,
  which is called the \defnfont{topological twist} of the label $i$.  The topological
  twists are encoded by a diagonal modular $T$-matrix $T=(\delta_{ij}\theta_i), i,j\in \Pi_\CC$.   The
  modular $S$-matrix and modular $T$-matrix together lead to a projective representation of the
  modular group $\SL{}$ by sending the generating matrices

  \begin{equation*}
     \fs=\mtx{0 & -1\\ 1 & 0},\qquad \ft=\mtx{1 & 1\\ 0 & 1}
  \end{equation*}
  to $S,T$, respectively \cite{Tu1, BKi}.  The
  modular group $\SL{}$ arises here as the mapping class group of the torus.  Amazingly, the kernel of this projective representation of $\SL{}$ is always a congruence
  subgroup of $\SL{}$ \cite{NS3}.  The $S$-matrix determines the fusion rules through the Verlinde formula, and the $T$-matrix is of finite order $\ord(T)$ by
  Vafa's theorem \cite{BKi}.  Together, the pair $(S,T)$ is called the \emph{modular data} of $\CC$.

The abelian-ness of modular categories first
  manifests itself in the tensor product, via the braiding.  But a deeper sense of
  abelian-ness is hidden in the
  Galois group of the number field $\mbbK_{\mathcal{C}}=\mathbb{Q}[S_{ij}],i,j\in \Pi_\CC$:  $\Gal(\mbbK_{\mathcal{C}}/\BQ)$ is isomorphic to an
    abelian subgroup \cite{BG1, RSW} of the symmetric group $\mathfrak{S}_r$, where $r$ is the rank of
  $\mathcal{C}$.  This profound abelian-ness permits the application of number theory to
  the study of modular categories.

In $2003$, the fourth author conjectured that, up to equivalence, there are only
  finitely many modular categories of a given rank, which we will call {\it the
  rank-finiteness conjecture} \cite{W3,RSW}.

The first main result of the paper is a proof of this conjecture:
  \begin{reptheorem}{Rank Finiteness}
    There are only finitely many modular categories of fixed rank $r$, up to
    equivalence.
  \end{reptheorem}

The idea of the proof is as follows.  A classical result of Landau \cite{L1} is that for each $n$, there are only finitely many finite groups $G$ with exactly $n$ irreducible complex representations.  Landau's result is proved by dividing the class equation $|G|=\sum_{i=1}^n [G:C(g_i)]$ by $|G|$ to produce the Diophantine equation $1=\sum_{i=1}^n\frac{1}{x_i}$.  Observing that this equation has finitely many solutions in the positive integers, one obtains a bound on $|G|$, which implies the result.  Our proof follows the same strategy, with the dimension equation $D^2=\sum_{i\in \Pi_\CC}d_i^2$ playing the role of the class equation.
Firstly, Ocneanu rigidity implies that for a fixed set of fusion rules, there are
  only finitely many equivalence classes of modular categories \cite{ENO1}.
  Hence, the rank-finiteness conjecture is reduced to showing that there are only
  finitely many possible fusion rules for any given rank. Using the Verlinde
  formula, we can deduce the finiteness of
  fusion rules for a given rank from a bound of the global quantum dimension
  $D^2$. In particular, if there were only finitely possible values
  of $D^2$ in each rank, then rank-finiteness would follow \cite[Prop. 6.2]{RSW}.  Since the $\{d_i\}$ are algebraic integers in a cyclotomic field, not necessarily in $\mathbb{Q}$, a new approach is needed.

Recall that the Frobenius-Schur exponent $\FSexp(\CC)$ of a spherical fusion category $\CC$ over $\BC$ is equal to the order of the $T$-matrix of its (modular) Drinfeld center $Z(\CC)$, and for a modular category, $\FSexp(\CC)=\ord(T)$, the order of the $T$-matrix (cf. \cite{NS2}).
 Our technical advance is the second main result, which is of independent interest.  This is a generalization of the Cauchy theorem in
  group theory to the context of spherical fusion categories:

\begin{reptheorem}{Cauchy Theorem for Modular Categories}
    If $\CC$ is a spherical fusion category over $\BC$ with $N=\FSexp(\CC)$, then the set of prime ideals dividing the principal ideal generated by the global quantum dimension $D^2$ of $\CC$ in the Dedekind domain  $\BZ[e^{2\pi i/N}]$ is identical to that of $N$.
\end{reptheorem}

As a consequence, the quantum dimensions $\{d_i\}$\and $D^2$ of a modular category have special arithmetic properties: they are
  so-called {\it $\mcS$-units} with respect to the common set $\mcS$ of prime ideals in the
  factorization of the ideals generated by $D^2$ and $\ord(T)$.   Then regarding
  $D^2=\sum_{i\in\Pi_\CC}d_i^2$ as an $\mcS$-unit equation, we apply a powerful theorem of Evertse \cite{Ev1}: for any fixed $r$, there are finitely many non-degenerate solutions to $1=\sum_{i=1}^r x_i$ for $\mcS$-units $x_i$.  It follows that there are
  only finitely many solutions to the dimension equation $D^{2}=\sum_{i\in\Pi_\CC}d_{i}^{2}$, which, in turn, bounds $D^2$ in terms of the rank.  Rank-finiteness then follows.

All these steps can be made effective, so we
  have explicit bounds for the number of solutions to the dimension equation.  The bound for the number of
  possible modular
  categories for a given rank that we obtained is absurdly large.  For example
  for rank=$2$, there are only $8$ modular categories up to equivalence while, our bound for the number of solutions to the $\mcS$-unit
  equation $D^{2}=1+d_1^2$ (see Proposition \ref{201302051706}) is
   $2^{8.15885\times 10^{41}}$.
  A natural question is to determine if there is a better bound for the
  number of modular categories of rank=$r$.  Etingof observes in Remark \ref{EtRem} that the number of modular categories of rank=$r$ grows faster than any polynomial in $r$.

The rank-finiteness conjecture was motivated by the classification of topological phases of matter \cite{W1,W2}.  Topological phases of matter
  are states of matter which have an energy gap in the thermodynamic limit and
  are stable under small yet arbitrary perturbations.  Thus, they cannot be
  continuously deformed non-trivially inside topological states of matter.  A mathematical
  manifestation of this rigidity of topological phases of matter should be rank-finiteness.
The rank-finiteness theorem implies that, in principle, modular categories can be classified for low rank cases.  Indeed, early progress in the classification program is the complete classification of unitary modular categories of rank at most $4$ \cite{RSW}. The authors of this paper have now pushed the classification to rank $5$, which will appear in a separate publication.

The Cauchy theorem for spherical fusion categories is a generalization of \cite[Thm. 8.4]{NS2} for \emph{integral} fusion categories. This question was originally asked by Etingof and Gelaki in the context of Hopf algebras \cite[Question 5.1]{EG1}, and subsequently verified for Hopf algebras in \cite{KSZ1}. Moreover, the Cauchy theorem provides an affirmative answer to \cite[Question 6.10]{DLN1}.

The proof of the Cauchy theorem for spherical fusion categories relies heavily upon \emph{higher Frobenius-Schur ($FS$)-indicators}, which are discussed in Section \ref{Modular Categories}.  The theory of $FS$-indicators has played a key role in several recent results such as the congruence subgroup theorem \cite{NS2} and Galois symmetry \cite{DLN1} (cf. Sec. \ref{galsymsection}).

For a finite group $G$, the $n$th $FS$-indicator of a representation $V$ over $\BC$ with character $\chi_V$ is given by $\nu_n(V):=\frac{1}{|G|}\sum_{g\in G}\chi_V(g^n)$. The Frobenius-Schur theorem asserts that the second $FS$-indicator $\nu_2(V)$ of an irreducible representation $V$ must be $-1$, $0$ or $1$.

The second $FS$-indicator for a primary field of a rational conformal field theory is introduced by Bantay \cite{Ban1} as an expression in terms of the associated modular data. Bantay's expression provides a formula for the second $FS$-indicator of a simple object in a modular category.  Second $FS$-indicators are later introduced by Fuchs, Ganchev,
Szlach\'anyi, and Vescerny\'es for certain ${\mathbb{C}}^*$-fusion categories \cite{FGSV}, by Linchenko and Montgomery \cite{LM} for semisimple Hopf algebras, and by Mason and Ng \cite{MNg} for semisimple quasi-Hopf algebras.  In each case, they are shown to satisfy an analogue of the Frobenius-Schur theorem.  The less familiar higher $FS$-indicators ($n>2$) are also defined for semisimple Hopf algebras in \cite{LM}, and studied extensively by Kashina, Sommerh\"auser and Zhu in \cite{KSZ1}.

The second $FS$-indicators are first shown in \cite{MNg} to be invariants of the integral fusion categories $\Rep(H)$, where $H$ is a semisimple quasi-Hopf algebra over $\BC$. This result motivates the development of higher $FS$-indicators $\nu_n$ for pivotal categories by Ng and Schauenburg \cite{NS1}, and in particular, spherical fusion categories $\CC$ over $\BC$ \cite{NS2}. The invariance of higher $FS$-indicators for $\Rep(H)$  \cite{NS4} follows from their categorical treatment.
For a spherical fusion category $\CC$, there exists a minimal positive integer $N=\FSexp(\CC)$, called the \emph{$FS$-exponent} of $\CC$, which satisfies $\nu_N(X_k)=d_k$ for all $k\in \Pi_\CC, X_k\in k$ \cite{NS2}. The $FS$-exponent of a spherical fusion category behaves, in many ways, like the exponent of a finite group. In fact, the $FS$-exponent of $\Rep(G)$ for any finite group $G$ is equal to the exponent of $G$.

Inspired by the paper \cite{SZ1} of Sommerh\"auser and Zhu, Ng and Schauenburg formulate a categorical definition of generalized $FS$-indicators in \cite{NS3} so that $\SL{}$ acts on them in two different but compatible ways. These generalized $FS$-indicators reveal new arithmetic properties of modular categories, which include the congruence kernels of their projective representations of $\SL{}$ (\emph{loc.~cit.}) and the Galois symmetry \cite{DLN1} conjectured by Coste and Gannon \cite{CG1}.

 Our reduction of rank-finiteness to Evertse's theorem obscures the nature of rank-finiteness for modular categories.  The
  key to Evertse's finiteness of $\mcS$-unit solutions is the Schmidt subspace
  theorem, which implies finiteness theorems for some simultaneous approximations
  to algebraic numbers by elements of a number field. A more direct proof of rank-finiteness might shed light on whether or not rank-finiteness also holds for spherical fusion categories.  One potential approach is taking the Drinfeld center of spherical fusion categories and then deducing rank-finiteness for spherical fusion categories from the modular case.  The key hurdle to this approach is controlling the rank of the Drinfeld center in terms of the rank of the original category.  In light of \cite{Eti2}, this appears to be difficult: even for integral fusion categories $\CC$, the rank of the Drinfeld center $Z(\CC)$ is super-polynomial in the rank of $\CC$.

The contents of the paper are as follows.  Section \ref{Modular Categories} is a collection of necessary results on fusion and modular categories.  In Section \ref{Rank Finiteness and the Cauchy Theorem}, we prove the Cauchy theorem for spherical fusion categories and rank-finiteness for modular categories, as well as for modularizable premodular categories.  In Section \ref{asymptotics}, we conclude with a discussion of asymptotics and future directions.

\section{Modular Categories}
  \label{Modular Categories}
In this section, we will collect some conventions and essential results on
spherical fusion categories and modular categories. Most of these results can be
found in  \cite{Tu1, BKi, ENO1, NS1, NS2, NS3} and the references
therein.  All fusion and modular categories are over the complex numbers $\mathbb{C}$ in this paper unless stated otherwise.

\subsection{Basic Definitions}
\label{subsection: Basic Definitions}

 A modular
category is a braided spherical fusion category in which the braiding is
non-degenerate.
Modular categories were first axiomatized by Turaev \cite{Tu2}, based
on earlier notions in rational conformal field theory by Moore and Seiberg \cite{MS1} and related foundational work of Joyal and Street \cite{JS1}.  Early interesting examples arose in the work of Reshetikhin and Turaev \cite{ReshTur} on
quantum groups and their application to low-dimensional topology.   In this section, we will give the precise definition and
describe some further properties and consequences of the definition.


  \subsubsection{Fusion Categories}
  \label{subsubsection: Fusion Categories}

    Recall from \cite{ENO1}, a \defnfont{fusion category}  $\CC$ (over $\BC$) is an abelian
    $\BC$-linear semisimple rigid
    monoidal category with a simple unit object $\1$, finite-dimensional morphism
    spaces and finitely many isomorphism classes of simple objects.

    Here a \defnfont{monoidal} category is a category $\CC$ with
    \begin{enumerate}
     \item  a bifuntor $\ot:\CC\times\CC\rightarrow \CC$
     \item a natural  isomorphism (associativity) $\alpha_{UVW}:(U\ot V)\ot W\rightarrow U\ot(V\ot W)$
     \item and a unit object $\1$ with natural isomorphisms $\lambda_V:\1\ot V\rightarrow V$ and $\rho_V:V\ot\1\rightarrow V$
    \end{enumerate} so that the associativity and the unit satisfy the pentagon and triangle compatibility axioms \cite{Mac}.  A monoidal functor is a pair $(F, \phi)$ where $F:\CC\rightarrow\DD$ is a functor with $F(\1_\CC)\cong \1_\DD$ and $\phi_{V,W}: F(V \otimes W) \rightarrow F(V) \otimes F(W)$ is a natural isomorphism which is compatible with the associator.  Two monoidal categories $\CC$ and $\DD$ are equivalent if there is a monoidal functor $(F, \phi)$ from $\CC$ to $\DD$ so that the functor $F: \CC \rightarrow \DD$ is an equivalence of the underlying ordinary categories \cite{Mac}.

    In a fusion category $\CC$ with tensor product $\o$ and unit
    object $\1$, the left dual of $V \in \CC$ is a triple $(V^{*}, \db_V, \ev_V)$,
    where $\db_V: \1 \to V \o V^{*}$ and $\ev_V: V^{*} \o V \to \1$  are the
     coevaluation and evaluation morphisms.  The left duality can be
    extended to a monoidal functor $(-)^{*} : \CC \to
    \CC^{\op}$, and so $(-)\bidu: \CC \to \CC$ defines a monoidal
    equivalence.
    Moreover, we can choose $\1^{*} = \1$. The linear space of morphisms
    between objects $V$ and $W$ will be denoted as
    $\Hom_{\mcC}\(V,W\)$.  Right duals are similarly defined,

    Let $\Pi_\CC$ be the set of isomorphism classes of simple objects of the
    fusion category $\CC$. The \defnfont{rank} of $\CC$ is the finite number
    $r=|\Pi_\CC|$, and we denote the members of $\Pi_\CC$ by $\{0,\ldots,r-1\}$.
    We simply write $V_i$ for an object in the isomorphism class $i \in
    \Pi_\CC$. By convention, the isomorphism class of $\1$ corresponds to $0\in\Pi_\CC$. The
    rigidity of $\CC$ defines an involutive permutation $i \mapsto i^*$ on $\Pi_\CC$,
    which is given by $V_{i^*} \cong V_i^{*}$
    for all $i \in \Pi_\CC$.

  \subsubsection{Braidings}
  \label{subsubsection: Braiding and Drinfeld Isomorphisms}

    A \defnfont{braiding} $c$ of a
    fusion category $\CC$ is a natural family of isomorphisms $c_{V, W}: V \o W \to
    W \o V$ in $V$ and $W$ of $\CC$ which satisfy the hexagon axioms:
    \begin{equation*}
      \xymatrix{ U \o (V \o W) \ar[r]^-{c_{U, V\o W}} &   (V \o W) \o U
      \ar[d]^-{\a} \\
      (U \o V) \o W \ar[d]_-{c \o \id}\ar[u]^-{\a} &  V \o (W \o U)  \\
      (V \o U) \o W \ar[r]_-{\a} &  V \o (U \o W) \ar[u]_-{\id \o c}
      \,,}\quad
      \xymatrix{ (U \o V) \o W \ar[r]^-{c_{U\o V, W}} &   W \o (U \o V)
      \ar[d]^-{\a\inv} \\
      U \o (V \o W) \ar[d]_-{\id \o c}\ar[u]^-{\a\inv} &  (W \o U) \o V  \\
      U \o (W \o V) \ar[r]_-{\a\inv} &   (U \o W) \o V \ar[u]_-{c \o \id}\,,
      }
    \end{equation*}

    for all $U, V, W \in \CC$, where $\a$ is the associativity isomorphism of $\CC$
    (cf. \cite{JS1}).

    A \defnfont{braided fusion category} is a pair $(\CC, c)$ in which $c$ is a braiding
    of the fusion category $\CC$. We simply call $\CC$ a braided fusion
    category if the underlying braiding $c$ is understood.

  \subsubsection{Spherical Fusion Categories}
  \label{subsubsection: Spherical Fusion Categories}

    A \defnfont{pivotal structure} of a fusion category $\CC$ is an isomorphism
    $j:\Id_\CC \to (-)\bidu$ of monoidal functors. One can respectively define the  left and the right
    \defnfont{pivotal traces} of an endomorphism $f: V \to V$ in $\CC$ as
    \begin{align*}
      \ptrl(f)&=\(\1\xrightarrow{\db_{V^{*}}} V^{*}\o V\bidu\xrightarrow{\id\o j_V\inv} V^{*}\o
      V\xrightarrow{\id \o f} V^{*}\o V \xrightarrow{\ev_V}\1\)\\
      \ptrr(f)&=\(\1\xrightarrow{\db_V}V\o
      V^{*}\xrightarrow{f\o \id}V\o V^{*} \xrightarrow{j_V\o \id} V\bidu \o V^{*}
      \xrightarrow{\ev_{V^{*}}}\1\)\,.
    \end{align*}

    Note that $j_V\du = j_{V\du}\inv$ (cf. \cite[Prop. A.1]{Sch}), and so we have $\ptrl(f) = \ptrr(f\du)$.
    Since $\1$ is a simple object of $\CC$, both pivotal traces $\ptrl(f)$ and
    $\ptrr(f)$ can be identified with some scalars in $\BC$.  A pivotal structure
    on $\CC$ is called \defnfont{spherical} if the
    two pivotal traces coincide for all endomorphisms $f$ in $\CC$. In a spherical category, the pivotal trace(s)
    will be denoted by $\ptr(f)$.

    For the purpose of this paper, a \defnfont{pivotal} (resp. \defnfont{spherical})
    \defnfont{category} $(\CC, j)$ is a fusion category $\CC$ equipped with a pivotal
    (resp. spherical) structure $j$. We will denote the pair $(\CC, j)$ by
    $\CC$ when there is no ambiguity. The \defnfont{left} and the \defnfont{right
    pivotal dimensions} of $V\in\CC$ are defined as $d^\ell(V)=\ptr^\ell(\id_V)$ and
    $d^{r}(V)=\ptr^r(\id_V)$ respectively.

  \subsubsection{Modular Categories}
  \label{subsubsection: Modular Categories}
    Following \cite{Ka1}, a \defnfont{twist}  (or \defnfont{ribbon structure}) of a
    braided fusion category $(\CC, c)$ is an $\BC$-linear automorphism,
    $\theta$, of $\Id_\CC$ which satisfies
    \begin{equation*}
      \theta_{V \o W} = (\theta_V \o \theta_W)\circ c_{W, V}\circ  c_{V, W}, \quad \theta^{*}_V = \theta_{V^{*}}
    \end{equation*}

    for $V, W \in \CC$. A braided fusion category equipped with a ribbon structure
    is called a \defnfont{ribbon fusion} or \defnfont{premodular} category.

     Associated with the braiding $c$ is an isomorphism of $\BC$-linear functors
    $u: \Id_\CC \to (-)\bidu$, called the \defnfont{Drinfeld isomorphism}.  When $\CC$ is a strict
    fusion category, $u_V$ is the composition:
    \begin{equation*}
      u_V :=(V \xrightarrow{\db \o \id} V^{*} \o V\bidu \o V \xrightarrow{\id \o
      c\inv} V^{*} \o V \o V\bidu   \xrightarrow{\ev \o \id } V\bidu=V )\,.
    \end{equation*}

    If $u$ is the Drinfeld isomorphism associated with $c$, and $\th$ is a ribbon
    structure, then
    \begin{equation}
      j=u \theta
    \end{equation}

    is a spherical  structure of $\CC$. This equality defines a one-to-one
    correspondence between the spherical structures and the ribbon structures on
    $(\CC,c)$. In particular, every premodular category admits a spherical structure.

    A premodular category $\CC$ is called a \defnfont{modular category} if the
    $S$-\defnfont{matrix} of $\CC$, defined by
    \begin{equation*}
      S_{ij} = \ptr(c_{V_j, V_{i^*}} \circ c_{V_{i^*}, V_j}) \text{ for }  i, j \in \Pi_\CC\,,
    \end{equation*}

    is non-singular.  Note that $S$ is a symmetric matrix and that
    $d^{r}\(V_i\)=S_{0i}=S_{i0}$ for all $i$.

\subsection{Further Properties and Basic Invariants}
\label{subsection: Further Consequences and Properties}
  \subsubsection{Grothendieck Ring and Dimensions}
    The \defnfont{Grothendieck ring} $K_0(\CC)$ of a fusion category $\CC$ is the
    $\BZ$-ring generated by $\Pi_\CC$ with multiplication induced from
    $\o$.  The structure coefficients of $K_0(\CC)$ are obtained from:
    \begin{equation*}
      V_i \o V_j \cong \bigoplus_{k \in \Pi_\CC} N_{i,j}^k \,V_k
    \end{equation*}

    where $N_{i,j}^k = \dim(\Hom_{\mcC}\(V_k, V_i\o V_j\))$. This family of non-negative
    integers $\{N_{i,j}^k\}_{i,j,k \in \Pi_\CC}$ is called the \textit{fusion
    rules} of $\CC$.

    In a braided fusion category, $K_0(\CC)$ is a commutative ring and the fusion
    rules satisfy the symmetries:
    \begin{equation}
      \label{fusion symmetries}
      N_{i,j}^k=N_{j,i}^k=N_{i,k^*}^{j^*}=N_{i^*,j^*}^{k^*},\quad
      N_{i,j}^0=\delta_{i,j^*}.
    \end{equation}

    The \defnfont{fusion matrix} $N_i$ associated to $V_i$, defined by
    $(N_i)_{k,j}=N_{i,j}^k$, is
    an integral matrix with non-negative entries. In the braided fusion setting,
    these matrices are normal and mutually commuting. The largest real eigenvalue of
    $N_i$ is called the \defnfont{Frobenius-Perron
    dimension} of $V_i$ and is denoted by $\FPdim(V_i)$.
    Moreover, $\FPdim$ can be extended to a $\BZ$-ring homomorphism
    from $K_0(\CC)$ to $\BR$ and is the unique such homomorphism that is positive
    (real-valued) on $\Pi_\CC$ (see \cite{ENO1}). The \defnfont{Frobenius-Perron dimension} of
    $\CC$ is defined as
    \begin{equation*}
      \FPdim(\CC) = \sum_{i \in \Pi_\CC} \FPdim(V_i)^2\,.
    \end{equation*}

    \begin{defn}
      A fusion category $\mcC$ is said to be
      \begin{enumerate}
      \item \defnfont{weakly integral} if $\FPdim\(\mcC\)\in\mbbZ$.
        \item \defnfont{integral} if $\FPdim\(V_{j}\)\in\mbbZ$ for all
        $j\in\P_{\mcC}$.
        \item \defnfont{pointed} if $\FPdim\(V_{j}\)=1$ for all
        $j\in\P_{\mcC}$.
      \end{enumerate}

      Furthermore, if $\FPdim\(V\)=1$, then $V$ is \defnfont{invertible}.
    \end{defn}

    \begin{rmk}
      The terminology \textit{invertible} arises from the fact that $\FPdim\(V\)=1$
      if and only if $V\otimes V^{*}\cong\1$.  The set of invertible simple
objects generates a full fusion subcategory $\CC_{pt}$ called the \defnfont{pointed
subcategory}.
    \end{rmk}

    Let $\CC$ be a pivotal category. It follows from \cite[Prop. 2.9]{ENO1} that
    $d^{r}(V\du)=\ol{d^{r}(V)}$ is an algebraic integer for any $V \in \CC$. The
    \defnfont{global dimension} of $\CC$ is defined by
    \begin{equation*}
      D^2 = \sum_{i \in \Pi_\CC} |d^{r}(V_i)|^2.
    \end{equation*}

    \begin{remark}
      It is worth noting that the global dimension $D^2$ can be defined for any
      fusion category (cf. \cite{ENO1}), and does not depend on the existence,
      or choice of, of a pivotal structure.
    \end{remark}

    By \cite{M2, ENO1}, a pivotal structure of a fusion category $\CC$ is spherical
    if, and only if, $d^{r}(V)$ is real for all $V \in \CC$. In this case,
    $d^{r}\(V\)=d^{\ell}\(V\)$ and we simply write $d\(V\)$ to refer to the
    dimension of $V$. Furthermore for $i\in\P_{\mcC}$, we adopt the shorthand
    $d_{i}=d\(V_{i}\)$.

    A fusion category $\CC$ is called \defnfont{pseudo-unitary} if $D^{2}=\FPdim(\CC)$.
    For a pseudo-unitary fusion category $\CC$, it has been
    shown in \cite{ENO1} that there exists a unique spherical structure of $\CC$
    such that $d\(V\) = \FPdim(V)$ for all objects $V \in \CC$.

  \subsubsection{Spherical and Ribbon Structures}

    The set of isomorphism classes of invertible objects $G(\CC)$ in a fusion category $\CC$ forms a group in
    $K_0(\CC)$ where $i\inv = i^*$ for $i \in G(\CC)$. For modular categories $\CC$, the group
    $G(\CC)$ parameterizes pivotal structures on the underlying braided
    fusion category:\footnote{The second part of this result
    was pointed out to us by Naidu.}
    \begin{lem}
      \label{pivotalinvertible}
      Let $\CC$ be a modular category. There is a bijective correspondence between
      the pivotal structures of the underlying braided fusion category $\CC$ and
      the group of invertible objects $G(\CC)$. Under this correspondence, the
      inequivalent spherical structures of $\CC$ map onto the maximal elementary
      abelian 2-subgroup, $\Omega_2 G(\CC)$, of $G(\CC)$.
    \end{lem}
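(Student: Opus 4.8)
The plan is to parametrize \emph{all} pivotal structures by twists via the Drinfeld isomorphism, and then use the braiding to identify the relevant symmetry group with $G(\CC)$.

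First I would fix the given ribbon structure $\theta_0$ on $(\CC,c)$ (a modular category is in particular premodular, so it comes equipped with one) and set $j_0 = u\theta_0$; by the correspondence recalled above, $j_0$ is a spherical structure, which will serve as a basepoint. For an arbitrary pivotal structure $j$, the composite $u^{-1}j$ is a natural automorphism of $\Id_\CC$, and the monoidality of $j$ is equivalent — after cancelling, via the standard coherence for $u$, the double-braiding factors supplied by the Drinfeld isomorphism — to $\theta := u^{-1}j$ satisfying the twist identity $\theta_{V\otimes W} = (\theta_V\otimes\theta_W)c_{W,V}c_{V,W}$. Thus $j\mapsto u^{-1}j$ is a bijection between pivotal structures and twists, generalizing the stated spherical–ribbon correspondence, under which the spherical structures are exactly the twists that in addition satisfy $\theta_{V^*}=\theta_V^*$, i.e. the ribbon twists.

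Next I would note that the twists form a torsor over the group $\operatorname{Aut}_\otimes(\Id_\CC)$ of \emph{monoidal} natural automorphisms of $\Id_\CC$: for two twists $\theta,\theta_0$ the ratio $\psi:=\theta\theta_0^{-1}$ has the two copies of $c_{W,V}c_{V,W}$ cancel, so $\psi_{V\otimes W}=\psi_V\otimes\psi_W$, and conversely $\theta_0\psi$ is again a twist. It then suffices to identify $\operatorname{Aut}_\otimes(\Id_\CC)$ with $G(\CC)$. For invertible $g$ and simple $V_i$, the object $g\otimes V_i$ is simple, so $c_{V_i,g}c_{g,V_i}$ acts by a scalar $m_g(i)=\theta_0(g\otimes V_i)/(\theta_0(g)\theta_0(V_i))$ (writing $\theta_0(X)$ for the scalar twist of a simple object $X$); the assignment $i\mapsto m_g(i)$ is monoidal, and $g\mapsto m_g$ is a homomorphism $G(\CC)\to\operatorname{Aut}_\otimes(\Id_\CC)$. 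It is injective, since $m_g=\mathrm{id}$ means $g$ is transparent and the non-degeneracy of the braiding (invertibility of $S$) forces $g=\1$; and it is surjective because a monoidal natural automorphism of $\Id_\CC$ is the same datum as a character of the universal grading group $U(\CC)$, while for a modular category the monodromy pairing identifies $G(\CC)$ with $\widehat{U(\CC)}$, so the two groups have equal order. Composing, $g\mapsto u\theta_0 m_g$ is the required bijection from $G(\CC)$ onto the set of pivotal structures, carrying $e$ to $j_0$.

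Finally I would cut out the spherical locus. A pivotal structure $j=u\theta_0 m_g$ is spherical iff the twist $\theta_0 m_g$ is ribbon, i.e. iff $m_g(i)=m_g(i^*)$ for all $i$ (a condition $\theta_0$ already meets, being ribbon). Using $\theta_0(X)=\theta_0(X^*)$ together with $(g\otimes V_i)^*\cong g^{-1}\otimes V_{i^*}$ and $g^*=g^{-1}$, a short computation gives $m_g(i^*)=m_{g^{-1}}(i)$, so the condition becomes $m_g=m_{g^{-1}}$, that is $m_{g^2}=\mathrm{id}$, which by injectivity means $g^2=e$. Hence the spherical structures map onto the $2$-torsion subgroup $\Omega_2 G(\CC)$, as claimed. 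The genuinely external input — and the main obstacle — is the surjectivity of $g\mapsto m_g$, equivalently the equality $|G(\CC)|=|U(\CC)|$ for modular categories (the Gelaki–Nikshych duality between invertible objects and the universal grading); everything else is formal once the pivotal–twist dictionary is set up, although one must handle the braiding and coherence conventions in that dictionary with care.
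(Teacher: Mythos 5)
Your proposal is correct, and its first half follows the same skeleton as the paper's proof: both realize the pivotal structures as a torsor over $\Aut_\o(\Id_\CC)$ and then invoke the identification $\Aut_\o(\Id_\CC) \cong G(\CC)$ for modular categories, which is precisely the external input the paper cites as \cite[Thm. 6.2]{GN2} and which you correctly flag as the one non-formal step hiding behind the surjectivity of $g \mapsto m_g$. Two differences are worth noting. First, the paper works with pivotal structures directly, via $j \mapsto j_0\inv j$, whereas you transport everything through the Drinfeld isomorphism to balanced/ribbon twists; since natural endomorphisms of $\Id_\CC$ commute, $j_0\inv j = \theta_0\inv \theta$, so this extra layer is equivalent but not needed (though it is what makes your later ribbon computation available). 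Second---and this is where you genuinely diverge---the identification of the spherical locus is a different argument. The paper notes that $j$ is spherical iff its dimension function is real, i.e. iff the scalars $\lambda_V$ of $(j_0\inv j)_V$ are real; since $\Aut_\o(\Id_\CC) \cong G(\CC)$ is a finite group, these scalars are roots of unity, and real roots of unity are $\pm 1$, so sphericity is exactly the condition that $j_0\inv j$ has order at most $2$. You instead use the spherical--ribbon dictionary plus the dual identity $m_g(i^*) = m_{g\inv}(i)$ to reduce sphericity to $m_{g^2} = \id$, hence $g^2 = e$ by injectivity of $g \mapsto m_g$ (which in turn uses M\"uger non-degeneracy). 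Both are valid: the paper's roots-of-unity argument is shorter and requires no computation with duals, while yours makes the correspondence concrete---it exhibits exactly which monodromy character twists the pivotal structure---and trades the finiteness/reality step for an algebraic identity, at the price of explicitly invoking non-degeneracy a second time.
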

    \begin{proof}
      Let $j_0$ be the spherical structure of the modular category $\CC$.
      For any pivotal structure $j$ of $\CC$, we have $j_0\inv j \in  \Aut_\o(\Id_\CC)$,
      the group of automorphisms of the monoidal functor $\Id_\CC$. Moreover, $j
      \mapsto j_0\inv j$ defines a bijection between the set of pivotal structures
      of $\CC$ and $\Aut_\o(\Id_\CC)$. Note that $j$ is spherical if, and only if,
      the associated dimension function is real valued, and hence for any simple
      $V$, $(j_0\inv j)_V = \lambda_V \id_V$ for some real scalar $\lambda_V$.  By
      \cite[Thm. 6.2]{GN2}, $\Aut_\o(\Id_\CC) \cong G(\CC)$ and hence the first
      statement follows. In particular, $j_0\inv j$ has finite order. Thus, $j$ is
      a spherical structure of $\CC$ if, and only if, $(j_0\inv j)_V  = \pm
      \id_V$ for any simple $V$, or  $j_0\inv j \in \Aut_\o(\Id_\CC)$ is of order
      $\le2$. Therefore, the second statement follows from the isomorphism
      $\Aut_\o(\Id_\CC) \cong G(\CC)$.
    \end{proof}

    \begin{remark}
      The isomorphism $\Aut_\o(\Id_\CC) \cong G(\CC)$ is determined by the braiding $c$
      and the spherical structure $j_0$ of the modular category $(\CC,c,j_0)$. By
      \cite[Cor. 7.11]{M2}, $(\CC, c, j)$ is a modular category for all spherical
      structures $j$ of $\CC$, so that there are exactly $|G(\CC)|$ pivotal and
      $|\Omega_2 G(\CC)|$ spherical structures on the fusion category $\CC$.
    \end{remark}

    In any ribbon fusion category $\CC$ the associated ribbon structure, $\theta$,
    has finite order.
    This celebrated fact is part of Vafa's Theorem
    (see \cite{Va1, BKi}) in the case of modular categories.
    However, any ribbon category embeds in a modular category
    (via Drinfeld centers, see \cite{M2}) so the result holds generally. Observe
    that, $\theta_{V_i} = \th_i \id_{V_i}$ for some root of unity $\th_i \in \BC$.
    Since $\theta_\1 = \id_\1$, $\th_0 = 1$. The $T$-\defnfont{matrix} of
    $\CC$ is defined by $T_{ij} = \delta_{ij} \th_j$ for $i,j \in \Pi_\CC$. The
    \textbf{balancing equation}:
    \begin{equation}
      \label{Balancing}
      \theta_i\theta_j S_{ij}=\sum_{k\in\Pi_\CC} N_{i^*j}^kd_k\theta_k
    \end{equation}

    is a useful algebraic consequence, holding in any premodular category.
    The pair $(S,T)$ of $S$ and $T$-matrices will be called the \defnfont{modular
    data} of a given modular category $\CC$.

  \subsubsection{Modular Data and $\SL{}$ Representations}
   \begin{definition}
      For a pair of matrices $(S,T)$ for
      which there exists a modular category with modular data $(S,T)$, we will say
      $(S,T)$ is \defnfont{realizable modular data}.
    \end{definition}

    The fusion rules $\{N_{i,j}^k\}_{i,j,k \in \Pi_\CC}$ of
    $\CC$ can be written in terms of the $S$-matrix, via the
    \defnfont{Verlinde formula} \cite{BKi}:
    \begin{equation}
      \label{Verlinde Formula}
      N_{i,j}^k = \frac{1}{D^{2}} \sum_{a \in \Pi_\CC} \frac{S_{ia} S_{ja}
      S_{k^*a}}{S_{0a}} \quad\text{for all } i,j,k \in \Pi_\CC\,.
    \end{equation}

    The  modular data $(S, T)$ of a modular category $\CC$ satisfy the conditions:
    \begin{equation}
      \label{eq:STrelations}
      (ST)^3 = p^{+} S^2, \quad S^2=p^{+}p^{-} C, \quad CT=TC,\quad C^2=\id,
    \end{equation}

    where $p^{\pm} = \sum_{i\in \Pi_\CC} d_i^2 \th_i^{\pm 1} $ are called the
    \defnfont{Gauss sums}, and
    $C=\(\delta_{ij^*}\)_{i,j \in \Pi_\CC}$ is called the \defnfont{charge conjugation matrix} of $\CC$.
    In terms of matrix entries, the first equation in (\ref{eq:STrelations})
   gives the \textbf{twist equation}:
    \begin{equation}\label{twisteq}
      p^{+} S_{jk} = \theta_j \theta_k \sum_{i} \theta_i S_{ij} S_{ik}\,.
    \end{equation}
    The quotient $\frac{p^{+}}{p^{-}}$, called the \defnfont{anomaly} of $\CC$,
    is a  root of unity, and
    \begin{equation}
      \label{eq:gausssum}
      p^{+} p^{-}  =D^{2}.
    \end{equation}

    Moreover, $S$ satisfies
    \begin{equation}
      \label{eq:S}
      S_{ij} = S_{ji} \quad \text{and}\quad S_{ij^*} = S_{i^*j}
    \end{equation}

    for all $i, j \in \Pi_\CC$. These equations and the Verlinde formula imply that
    \begin{equation}
      \label{eq:orthogonality}
      S_{ij\du} = \ol{S_{ij}} \quad\text{and}\quad\frac{1}{D^{2}}\sum_{j \in \Pi_\CC}
      S_{ij}\ol{S}_{jk} = \delta_{ik}.
    \end{equation}

    In particular, $S$ is projectively unitary.

    A modular category $\CC$ is called \defnfont{self-dual} if $i=i\du$ for all $i \in
    \Pi_\CC$. In fact, $\mcC$ is self-dual if and only if $S$ is a real matrix.

    Let $D$ be the positive square
    root of $D^{2}$. The Verlinde formula can be rewritten as
    \begin{equation*}
      S N_i S\inv = D_i \quad \text{for }i \in \Pi_\CC
    \end{equation*}

    where $\(D_i\)_{ab} = \delta_{ab} \frac{S_{ia}}{S_{0a}}$. In particular, the
    assignments $\phi_a: i \mapsto \frac{S_{ia}}{S_{0a}}$ for $i \in \Pi_\CC$
    determine (complex) linear characters of $K_0(\CC)$. Since $S$ is non-singular,
    $\{\phi_a\}_{a \in \Pi_\CC}$ is the set of \textit{all} the linear characters of
    $K_0(\CC)$. Observe that $\FPdim$ is a character of $K_0(\CC)$, so that there is
    some $a\in\Pi_\CC$ such that $\FPdim=\phi_a$.  By the unitarity of $S$, we have
    that $\FPdim(\CC)=D^{2}/(d_a)^2$.

    As an abstract group, $\SL{} \cong \langle \fs, \ft \mid \fs^4=1, (\fs \ft)^3
    =\fs^2\rangle$. The standard choice for generators is:
    \begin{equation*}
      \fs := \mtx{0 & -1\\ 1 & 0}\quad \text{and} \quad \ft:=\mtx{1 & 1\\ 0 & 1}\,.
    \end{equation*}

    Let $\eta:\GLC{\Pi_\CC}\to
    \PGLC{\Pi_\CC}$ be the natural surjection. The relations \eqref{eq:STrelations} imply that
    \begin{equation}
      \label{eq:projrep}
      \orho_\CC\colon \fs\mapsto \eta(S)\quad \text{and}\quad \ft\mapsto \eta(T)
    \end{equation}

    defines a projective representation of $\SL{}$.  Since the  modular
    data is an invariant of a modular category, so is the associated projective
    representation type of $\SL{}$.  The following arithmetic properties of this
    projective representation will play an important role in our discussion (cf.
    \cite{NS3}).  Recall that $\BQ_N:=\BQ(\zeta_N)$, where $\zeta_N$ is a primitive $N$th root of unity.
    \begin{thm}
      \label{t:cong1}
      Let $(S,T)$ be the  modular data of the modular category $\CC$ with
      $N=\ord\(T\)$. Then the entries of $S$ are algebraic integers of $\BQ_N$.
      Moreover, $N$ is minimal such that the projective representation $\orho_\CC$ of
      $\SL{}$ associated with the modular data can be factored through $\mathrm{SL}(2,\BZ/N\BZ)$.
      In other words, $\ker\orho_\CC$ is a congruence subgroup of level $N$.
    \end{thm}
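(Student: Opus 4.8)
The statement bundles two claims: that every $S_{ij}$ is an algebraic integer lying in $\BQ_N$, and that $\ker\orho_\CC$ is precisely the principal congruence subgroup $\Gamma(N)$, with $N=\ord(T)$ minimal. I would dispatch integrality first and directly. Reading $S_{ij}$ off the balancing equation \eqref{Balancing}, $\theta_i\theta_j S_{ij}=\sum_{k\in\Pi_\CC} N_{i^*j}^k d_k\theta_k$, the right-hand side is an algebraic integer because the $N_{i^*j}^k$ are non-negative integers, each $d_k$ is an algebraic integer by \cite[Prop. 2.9]{ENO1}, and each $\theta_k$ is a root of unity; since $\theta_i\theta_j$ is a unit, $S_{ij}$ is an algebraic integer as well.

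The real content is to confine the entries to $\BQ_N$ and to identify the kernel, and these two facts are best proved as a single package through higher Frobenius--Schur indicators. The first move is to replace the projective representation $\orho_\CC$ of \eqref{eq:projrep} by an honest linear representation $\rho$ of $\SL{}$: the relations \eqref{eq:STrelations} together with the Gauss sums (and $p^+p^-=D^2$) let one normalize the scalar ambiguity so that $\rho(\fs)=S/D$ and $\rho(\ft)$ a root-of-unity rescaling of $T$ satisfy $\rho(\fs)^4=1$ and $(\rho(\fs)\rho(\ft))^3=\rho(\fs)^2$ exactly. One then realizes the generalized $FS$-indicators of \cite{NS1,NS3} as matrix entries of $\rho(\gamma)$, $\gamma\in\SL{}$. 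Using the defining property of the $FS$-exponent, $\nu_N(V_k)=d_k$, and the interpolation of the whole modular data by these indicators, one obtains that the indicators --- and hence every $S_{ij}$ --- lie in $\BQ_N$, i.e. $\mbbK_\CC=\BQ(S_{ij})\subseteq\BQ_N$.

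For the congruence statement I would argue in three steps. First, since $\theta_0=1$ and $N$ is the least common multiple of the orders of the $\theta_i$, the matrix $T^N$ is the identity while $T^k$ is non-scalar for $0<k<N$; hence $\orho_\CC(\ft)$ has order exactly $N$, so $\ft^N\in\ker\orho_\CC$ and the Wohlfahrt level of $\ker\orho_\CC$ equals $N$. Second --- and this is the heart --- I would establish the Galois symmetry of the generalized indicators: for $a$ coprime to $N$ the arithmetic automorphism $\sigma_a\in\Gal(\BQ_N/\BQ)$, $\zeta_N\mapsto\zeta_N^a$, acts on the $\BQ_N$-valued entries of $\rho$ by exactly the permutation-and-sign data induced by the diagonal transformation $\mathrm{diag}(a,a^{-1})$ in $\mathrm{SL}(2,\BZ/N\BZ)$. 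This ``arithmetic action $=$ modular action'' compatibility forces $\ker\orho_\CC$ to be stable in a way no non-congruence subgroup can be, so $\ker\orho_\CC$ is a congruence subgroup. Third, Wohlfahrt's theorem says a congruence subgroup of level $N$ contains $\Gamma(N)$; combined with $\ord(\orho_\CC(\ft))=N$ this pins down $\ker\orho_\CC=\Gamma(N)$ and the minimality of $N$, so $\orho_\CC$ factors through $\mathrm{SL}(2,\BZ/N\BZ)$ but through no smaller quotient.

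The decisive obstacle is the Galois symmetry in the second step. The formal $\SL{}$-relations \eqref{eq:STrelations} alone are far too weak --- $\SL{}$ has non-congruence subgroups of finite index, so no purely group-theoretic manipulation of $S$ and $T$ can yield congruence. What breaks the impasse is the genuinely new input that the \emph{arithmetic} Galois action on the modular data coincides with a \emph{geometric} $\SL{}$-action, and making this precise is exactly what the generalized-indicator machinery of \cite{NS1,NS3} is built to do; it is the step I expect to be both the hardest and the least formal.
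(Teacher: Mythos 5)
Note first that the paper does not prove Theorem \ref{t:cong1} at all: it is imported verbatim from Ng--Schauenburg \cite{NS3}, so your attempt has to be measured against that proof. The parts of your sketch that are complete are fine, and the first part can even be streamlined: by the balancing equation \eqref{Balancing}, $S_{ij}=\theta_i^{-1}\theta_j^{-1}\sum_k N_{i^*j}^k d_k\theta_k$, and since every $\theta_k$ is an $N$-th root of unity while the $d_k$ are algebraic integers in $\BQ_N$ (the theorem quoted in Section \ref{fs indicators}, combined with $\FSexp(\CC)=\ord(T)$ for modular categories), this exhibits each $S_{ij}$ as an algebraic integer of $\BQ_N$ in one stroke. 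Your level computation is also sound: $\theta_0=1$ forces any scalar power of $T$ to be the identity, so $\orho_\CC(\ft)$ has projective order exactly $N$, and normality of $\ker\orho_\CC$ then pins its Wohlfahrt level at $N$.

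The genuine gap is your second step, which you yourself flag as ``the least formal'': the claim that Galois compatibility ``forces $\ker\orho_\CC$ to be stable in a way no non-congruence subgroup can be'' is not an argument, and it is exactly where all of the work in \cite{NS3} lives. Wohlfahrt's theorem cannot start the engine: it only upgrades ``congruence subgroup of some level'' plus ``Wohlfahrt level $N$'' to ``contains $\Gamma(N)$''; it is powerless to establish congruence in the first place, which is the whole content of the theorem (as you note, $\SL{}$ has finite-index non-congruence subgroups, and these too have well-defined Wohlfahrt levels). What \cite{NS3} actually proves, via a delicate analysis of equivariant indicators, is a concrete identity: the signed permutation matrices implementing the Galois action \eqref{eq:galois2} lie, projectively, in the image of $\orho_\CC$, as images of matrices congruent to $\mathrm{diag}(a,a^{-1})$ modulo $N$; feeding these into a presentation of $\mathrm{SL}(2,\BZ/N\BZ)$ by generators and relations (the relations beyond \eqref{eq:STrelations} and $\ft^N=1$ are precisely the ones these diagonal elements satisfy) is what yields $\Gamma(N)\subseteq\ker\orho_\CC$. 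Your sketch names the right ingredient but supplies no mechanism converting it into congruence. Separately, your concluding claim $\ker\orho_\CC=\Gamma(N)$ is false in general: for any self-dual modular category with $N>2$ (Ising, Fibonacci), the charge conjugation matrix is the identity, so $S^2=D^2\,\mathrm{Id}$ and hence $\fs^2\in\ker\orho_\CC$, while $-\mathrm{Id}\notin\Gamma(N)$. This error does not sink the statement, which asserts only that the kernel is a congruence subgroup of level $N$ (contains $\Gamma(N)$, with $N$ minimal) --- a conclusion your steps would deliver once the middle step is actually proved rather than asserted.
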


    \begin{definition}
      A \defnfont{modular representation} of $\CC$ (cf. \cite{NS3}) is a representation
      $\rho$ of  $\SL{}$ which satisfies the commutative diagram:
      \begin{equation*}
        \xymatrix{ \SL{}\ar[r]^-{\rho}\ar[rd]_-{\orho_\CC}
        &\GLC{\Pi_\CC}\ar[d]^-{\eta}\\
        & \PGLC{\Pi_\CC}\,.
        }
      \end{equation*}
    \end{definition}

    Let $\zeta\in \mbbC$ be a fixed $6$-th root of the anomaly
    $\dfrac{p^{+}}{p^{-}}$. For any $12$-th root of unity $x$, it follows from
    \eqref{eq:STrelations} that  the assignments
    \begin{equation}
      \label{eq:repC}
      \rho_x^\zeta: \fs \mapsto \frac{\zeta^3}{x^3 p^{+}} S, \quad \ft \mapsto \frac{x}{\zeta} T
    \end{equation}

    define a  modular representation of $\CC$.  Moreover, $\{\rho_x^\zeta\mid
    x^{12}=1\}$ is the complete set of modular representations of $\CC$ (cf.
    \cite[Sect. 1.3]{DLN1}).  Since $D^2=p^{+}p^{-}$, we have $\zeta^3/p^{+}=\gamma/D$, where $\gamma =\pm 1$. Thus, one can always find a $6$-th root of unity $x$ so that $\rho_x^\zeta: \fs \mapsto S/D$. For the purpose of this paper, we only need to consider the modular representation $\rho$ of $\CC$ which assigns $\fs \to S/D$.    Note also that $\rho_x^\zeta(\fs)$ and $\rho_x^\zeta(\ft)$
    are matrices over a finite abelian extension of $\BQ$. Therefore, modular
    representations of any modular category are defined over the abelian closure
    $\BQA$ of $\BQ$ in $\BC$ (cf. \cite{BG}).

    Let $\rho$ be any modular representation of the modular category $\CC$, and set
    \begin{equation*}
      s = \rho(\fs) \quad \text{and}\quad t = \rho(\ft)\,.
    \end{equation*}

    It is clear that a representation $\rho$ is uniquely determined by the pair
    $(s,t)$, which will be called a \defnfont{normalized modular pair} of $\CC$. In view of the preceding paragraph, there exists a root of unity $y$ such that $(S/D, T/y)$ is a  normalized modular pair of $\CC$.

  \subsubsection{Galois Symmetry}\label{galsymsection}

    Observe that for any choice of a normalized modular pair $(s,t)$, we have
    $\frac{s_{ia}}{s_{0a}}=\frac{S_{ia}}{S_{0a}}=\phi_a(i)$.
    For each $\s \in \Aut(\BQA)$, $\s(\phi_a)$  given by $\s(\phi_a)(i) =
    \s\left(\frac{s_{ia}}{s_{0a}}\right)$ is again a linear character of $K_0(\CC)$
    and hence $\s(\phi_a) = \phi_{\hs(a)}$ for some unique $\hs\in \Sym(\Pi_\CC)$.
    That is,
    \begin{equation}
      \label{eq:galois1}
      \s\left(\frac{s_{ia}}{s_{0a}}\right) = \frac{s_{i\hs(a)}}{s_{0\hs(a)}} \quad \text{for all }i, a\in
      \Pi_\CC\,.
    \end{equation}
    Moreover, there exists a function $\e_\s : \Pi_\CC \to \{\pm 1\}$, which
    depends on the choice of $s$, such that:
    \begin{equation}
      \label{eq:galois2}
     \s(s_{ij}) = \e_{\s}(i) s_{\hs(i) j} = \e_{\s}(j) s_{i \hs(j)} \quad \text{for all }i, j \in \Pi_\CC
    \end{equation}
    (cf.  \cite[App. B]{BG}, \cite{CG1} or \cite[App.]{ENO1}). The group $\Sym(\Pi_\CC)$ will often be written as $\mathfrak{S}_r$ where $r=|\Pi_\CC|$ is the rank of $\CC$.

    The following theorem will be used in the sequel:
    \begin{thm} \label{t:Galois}
      \label{c:galsym}
      Let $\CC$ be a modular  category of rank $r$, with $T$-matrix of order $N$. Suppose $(s,t)$ is a normalized modular pair of $\CC$. Set $t=(\delta_{ij} t_i)$ and $n =\ord (t)$. Then:
      \begin{enumerate}
       \item[(a)] $N \mid n\mid 12 N$ and
       $s,t \in \GL_r(\BQ_n)$.  Moreover,
       \item[(b)] (Galois Symmetry) for $\s\in \Gal(\BQ_n/\BQ)$,
       $
  \s^2(t_i)=t_{\hs(i)}.
  $
      \end{enumerate}
    \end{thm}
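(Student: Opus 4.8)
The plan is to prove the two parts by different means: part~(a) by a direct computation from the explicit form \eqref{eq:repC} of the modular representations together with Theorem~\ref{t:cong1}, and part~(b) by combining the congruence property with the realization of the Galois action as conjugation by a diagonal element of $\mathrm{SL}(2,\BZ/n\BZ)$. Since every normalized modular pair arises as $(s,t)=(\rho_x^\zeta(\fs),\rho_x^\zeta(\ft))$ for some $x$ with $x^{12}=1$, I would write $t=\frac{x}{\zeta}T$ and $s=\frac{\zeta^3}{x^3p^{+}}S$ as in \eqref{eq:repC} and set $\mu:=x/\zeta$. As $\theta_0=1$ we have $t_0=\mu$ and $t_i=\mu\theta_i$ for all $i$. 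Because $x^{12}=1$ and $\zeta$ is a $6$-th root of the anomaly $p^{+}/p^{-}$ (a root of unity), the scalar $\mu$ is a root of unity; together with Vafa's theorem this shows each $t_i$ is a root of unity, so $t$ has finite order $n$, all $t_i\in\BQ_n$, and in particular $\mu=t_0\in\BQ_n$.

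For part~(a) I would first pin down $n$. Reading off the $0$-entry, $t^k=\id$ forces $\mu^k=1$, and then $t_i^k=\theta_i^k=1$ for all $i$ forces $N\mid k$; conversely these two conditions suffice. Hence $n=\mathrm{lcm}(\ord(\mu),N)$, which already gives $N\mid n$. To bound $n$ from above it is enough to bound $\ord(\mu)=\ord(x/\zeta)$, and since $\ord(x)\mid 12$ it suffices to bound $\ord(\zeta)$. Here $\zeta^6=p^{+}/p^{-}=(p^{+})^2/D^2$, and by Theorem~\ref{t:cong1} all $S_{ij}$ lie in $\BQ_N$; hence $d_i=S_{0i}$, $p^{+}=\sum_i d_i^2\theta_i$ and $D^2=\sum_i d_i^2$ lie in $\BQ_N$, so the anomaly $\zeta^6$ is a root of unity of $\BQ_N=\BQ(\zeta_N)$. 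The roots of unity of $\BQ_N$ have exponent dividing $2N$, so $\ord(\zeta)\mid 12N$, whence $\ord(\mu)\mid 12N$ and $n=\mathrm{lcm}(\ord(\mu),N)\mid 12N$. Finally $t$ is diagonal with entries $t_i\in\BQ_n$, and using $\zeta^3/x^3=\mu^{-3}$ we may rewrite $s=\frac{1}{\mu^3p^{+}}S$; since $\mu\in\BQ_n$, $p^{+}\in\BQ_N\subseteq\BQ_n$ with $p^{+}\neq0$ (as $p^{+}p^{-}=D^2\neq0$), and $S_{ij}\in\BQ_N\subseteq\BQ_n$, every entry of $s$ lies in $\BQ_n$. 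As $S$ is non-singular and $t$ has nonzero diagonal, $s,t\in\GL_r(\BQ_n)$, proving (a).

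For part~(b) my plan is to use the congruence property to turn the Galois action into conjugation. By Theorem~\ref{t:cong1} the kernel of $\rho$ is a congruence subgroup, and $t=\rho(\ft)$ has order $n$, so $\ft^{n}\in\ker\rho$ and $\rho$ factors through a representation $\bar\rho$ of $\mathrm{SL}(2,\BZ/n\BZ)$. For $\s=\s_a\in\Gal(\BQ_n/\BQ)\cong(\BZ/n\BZ)^{\times}$ with $\s_a(\zeta_n)=\zeta_n^{a}$, the $t_i$ being $n$-th roots of unity gives $\s^2(t_i)=t_i^{a^2}$, i.e.\ $\s^2(t)=t^{a^2}$. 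On the other hand, the diagonal element $g_a\in\mathrm{SL}(2,\BZ/n\BZ)$ with diagonal entries $a,a^{-1}$ satisfies $g_a\ft g_a^{-1}=\ft^{a^2}$, so $\bar\rho(g_a)\,t\,\bar\rho(g_a)^{-1}=t^{a^2}=\s^2(t)$. It remains to identify the conjugating matrix $\bar\rho(g_a)$ with the permutation $\hs$: concretely, that $\bar\rho(g_a)=D_\s\,P_{\hs}^{-1}$ is a monomial matrix supported on $\hs$, where $P_{\hs}$ is the permutation matrix of $\hs$ and $D_\s$ is diagonal. Granting this, since $t$ is diagonal the factor $D_\s$ drops out of the conjugation and $\s^2(t)=P_{\hs}^{-1}\,t\,P_{\hs}=\mathrm{diag}(t_{\hs(i)})$, i.e.\ $\s^2(t_i)=t_{\hs(i)}$.

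The main obstacle is exactly this last identification: promoting the entrywise Galois symmetry \eqref{eq:galois1}--\eqref{eq:galois2} of the $S$-matrix to the statement that $\s^2$ is implemented on all of $\bar\rho$ by conjugation with the monomial matrix $\bar\rho(g_a)$ attached to $\hs$. This is the Galois symmetry of modular data conjectured by Coste--Gannon \cite{CG1} and established through the congruence-and-Galois framework of \cite{NS3,DLN1}, which I would invoke here. By contrast, part~(a) is elementary once Theorem~\ref{t:cong1} places the $S$-matrix entries in $\BQ_N$.
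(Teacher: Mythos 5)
The paper offers no proof of this theorem at all: it disposes of it in one sentence by citing \cite{NS3} for part (a) and \cite[Thm.~II(iii)]{DLN1} for part (b). Measured against that, your treatment of part (a) is genuinely different and stronger: you give a correct, essentially self-contained derivation from facts the paper records in its preliminaries --- the classification $\{\rho_x^\zeta \mid x^{12}=1\}$ of modular representations, Theorem~\ref{t:cong1}, Vafa's theorem, and the fact that the anomaly is a root of unity. The details check out: writing $t_i=\mu\theta_i$ with $\mu=x/\zeta=t_0$, reading off the $0$-entry gives $n=\mathrm{lcm}(\ord(\mu),N)$, hence $N\mid n$; since $\zeta^6=(p^+)^2/D^2$ is a root of unity lying in $\BQ_N$ (its order therefore divides $2N$), one gets $\ord(\mu)\mid 12N$ and so $n\mid 12N$; and $s=\frac{1}{\mu^3 p^+}S$, $p^+\neq 0$, places $s,t$ in $\GL_r(\BQ_n)$. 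This buys transparency that the paper's bare citation does not.

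Part (b) is a different story. Your reduction --- $\s^2(t)=t^{a^2}=\bar\rho(g_a)\,t\,\bar\rho(g_a)^{-1}$, then conjugation by a monomial matrix supported on $\hs$ permutes the diagonal --- is a faithful sketch of how the cited theorem produces the identity, but the crux you isolate (that $\bar\rho(g_a)$ \emph{is} such a monomial matrix) is precisely the content of \cite[Thm.~II]{DLN1}, so in the end your part (b), like the paper's, is a citation of the same external result. Within your sketch there is also a concrete misattribution: you claim Theorem~\ref{t:cong1} gives that $\ker\rho$ is a congruence subgroup, so that $\rho$ factors through $\mathrm{SL}(2,\BZ/n\BZ)$. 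Theorem~\ref{t:cong1} concerns only the \emph{projective} representation $\orho_\CC$ (level $N$); the congruence property of the \emph{linear} representation $\rho$ at level $n=\ord(t)$ does not follow from it together with $\ft^{\,n}\in\ker\rho$ --- passing from projective to linear congruence is itself one of the main theorems of \cite{NS3} (cf.\ \cite[Thm.~II(i)]{DLN1}). Since you invoke that framework anyway for the monomial-matrix identification, this is not fatal to your plan, but as written that intermediate step is unjustified, and part (b) should be presented honestly as resting on \cite{NS3,DLN1} rather than on Theorem~\ref{t:cong1}.
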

Part $(a)$ of Theorem \ref{t:Galois} is proved in \cite{NS3}, whereas
     part $(b)$ is proved in \cite[Thm. II(iii)]{DLN1}.

In the sequel, we will simply denote by $\BF_A$
    the field extension over $\BQ$ generated by the entries of a complex matrix
    $A$. If $\BF_A/\BQ$ is Galois, then we simply write $\Gal(A)$ for the Galois
    group $\Gal(\BF_A/\BQ)$.

In this notation, if $(S,T)$ is the modular data of $\CC$, then $\BF_T = \BQ_N$,
    where $N=\ord\(T\)$, and we have  $\BF_S \subseteq \BF_T$ by \thmref{t:cong1}. In
    particular, $\BF_S$ is an abelian Galois extension over $\BQ$.

For any normalized modular pair $(s,t)$ of $\CC$ we have $\BF_t = \BQ_n$, where
    $n=\ord\(t\)$. Moreover, by  \thmref{t:Galois}, $\BF_S\subseteq \BF_s \subseteq \BF_t$. In
    particular, the field extension $\BF_s/\BQ$ is also Galois. The kernel of the
    restriction map $\res:\Gal(t) \to \Gal(S)$ is isomorphic to $\Gal(\BF_t/\BF_S)$.

    The following important lemma is proved in \cite[Prop. 6.5]{DLN1}.
    \begin{lem}
      \label{l:2group}
      Let $\CC$ be a modular category with modular data $(S,T)$. For any
      normalized modular pair $(s,t)$ of $\CC$, $\Gal(\BF_t/\BF_S)$ is an
      elementary 2-group.
    \end{lem}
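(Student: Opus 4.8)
The plan is to exhibit every element of $\Gal(\BF_t/\BF_S)$ as an involution, using the Galois symmetry of Theorem~\ref{t:Galois}(b) as the essential input. First I would record the reduction: since $\BF_t = \BQ_n$ is abelian over $\BQ$, the subgroup $\Gal(\BF_t/\BF_S)$ of $\Gal(\BQ_n/\BQ)$ is abelian, so to show it is an elementary $2$-group it suffices to verify that $\s^2 = \id$ for every $\s \in \Gal(\BF_t/\BF_S)$. Such a $\s$ is precisely an automorphism in $\Gal(\BQ_n/\BQ)$ that restricts to the identity on $\BF_S$.

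Next I would argue that the permutation $\hs \in \Sym(\Pi_\CC)$ attached to $\s$ is trivial. The key observation is that $\hs$ is governed entirely by the $S$-data: by definition $\hs$ is determined by the action of $\s$ on the linear characters $\phi_a$, whose values $\phi_a(i) = s_{ia}/s_{0a} = S_{ia}/S_{0a}$ lie in the number field $\BF_S$ (which is closed under division, and $S_{0a} = d_a \neq 0$). Since $\s$ fixes $\BF_S$ pointwise, we get $\s(\phi_a) = \phi_a$ for all $a$, and then \eqref{eq:galois1} forces $\phi_{\hs(a)} = \phi_a$. Because $S$ is nonsingular, the $r$ characters $\{\phi_a\}_{a \in \Pi_\CC}$ are pairwise distinct (they constitute all the linear characters of $K_0(\CC)$), so $\phi_{\hs(a)} = \phi_a$ yields $\hs(a) = a$ for every $a$; that is, $\hs = \id$.

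Finally I would feed $\hs = \id$ into the Galois symmetry relation of Theorem~\ref{t:Galois}(b): $\s^2(t_i) = t_{\hs(i)} = t_i$ for all $i \in \Pi_\CC$. Since $\BF_t = \BQ_n$ is generated over $\BQ$ by the diagonal entries $t_i$ of $t$, this gives $\s^2 = \id$ on $\BF_t$, completing the argument. The one nontrivial ingredient here is the Galois symmetry identity $\s^2(t_i) = t_{\hs(i)}$ itself, which I would simply invoke from Theorem~\ref{t:Galois}(b); granting it, the remaining work is bookkeeping. The single point that must be handled with care is the decoupling of the two pieces of Galois data—showing that the combinatorial permutation $\hs$ depends only on $\BF_S$ while the squaring action $\s^2$ captures the rest of $\BF_t$—and it is exactly here that the non-singularity of $S$ is used, to guarantee that the characters $\phi_a$ are distinct and hence that a $\s$ trivial on $\BF_S$ produces a trivial $\hs$.
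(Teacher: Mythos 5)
Your proof is correct and is essentially the same argument as the paper's: the paper does not prove Lemma~\ref{l:2group} itself but cites \cite[Prop.~6.5]{DLN1}, and the proof there proceeds exactly as you do---any $\s \in \Gal(\BF_t/\BF_S)$ fixes the character values $s_{ia}/s_{0a} \in \BF_S$, so $\hs = \id$ by the pairwise distinctness of the characters $\phi_a$ (nonsingularity of $S$), and then Galois symmetry (Theorem~\ref{t:Galois}(b)) yields $\s^2(t_i) = t_{\hs(i)} = t_i$, hence $\s^2 = \id$ on $\BF_t = \BQ_n$. One point worth making explicit is non-circularity: the Galois symmetry identity you invoke is proved in \cite{DLN1} independently of this lemma (via the congruence property of modular representations), so using it here is legitimate.
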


  \subsubsection{Frobenius-Schur Indicators}\label{fs indicators}

    A \defnfont{strict pivotal} category is a pivotal category in which the
    associativity isomorphisms are identities,  the pivotal structure $j:\Id_\CC\rightarrow
    (-)\bidu$ is the identity, and
    the associated natural isomorphisms $\xi_{U, V}: U^{*} \o V^{*} \to (V \o
    U)^{*}$ are also identities. Moreover, we have the following theorem (cf.
    \cite{NS1}).
    \begin{thm}
      \label{t:str_piv}
      Every pivotal category is pivotally equivalent to a strict pivotal category.
    \end{thm}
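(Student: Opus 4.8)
The plan is to strictify the structure in stages, invoking the coherence theorem for monoidal categories at the outset and then handling the duality and pivotal data by hand. First I would apply Mac Lane's strictness theorem \cite{Mac}: every monoidal category is monoidally equivalent to a strict one. Transporting the remaining structure along such an equivalence (a monoidal equivalence automatically carries duals to duals and a pivotal structure to a pivotal structure), we may assume that the associativity and unit constraints of $\CC$ are already identities. Thus $\CC$ is a strict monoidal category equipped with a left duality $(-)^{*}$, the coherence isomorphisms $\xi_{U,V}\colon U^{*}\o V^{*}\to (V\o U)^{*}$ exhibiting $(-)^{*}$ as a tensor-reversing monoidal functor, and a pivotal structure $j\colon \Id_\CC\to (-)\bidu$. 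It then remains to produce a pivotally equivalent category in which $\xi$ and $j$ are identities.

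Next I would strictify the duality by passing to a syntactic model. Let $\CC'$ be the category whose objects are finite words $\underline{X}=(X_1,\dots,X_n)$ in the objects of $\CC$, with the empty word as unit and concatenation as tensor product, so that the tensor product of $\CC'$ is strictly associative and unital. Setting $\Hom_{\CC'}(\underline{X},\underline{Y})=\Hom_\CC(X_1\o\cdots\o X_n,\,Y_1\o\cdots\o Y_m)$, the functor $\CC'\to\CC$ that evaluates a word to its iterated tensor product is fully faithful and essentially surjective, hence a monoidal equivalence. On $\CC'$ I would define the dual of a word by reversal, $(X_1,\dots,X_n)^{*}:=(X_n^{*},\dots,X_1^{*})$, and assemble its evaluation and coevaluation from those of $\CC$ using the isomorphisms $\xi$. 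By construction $\1^{*}=\1$ and the coherence isomorphisms for this new dual are identities, so $\xi$ has been trivialized; moreover the double dual of a word is now computed letterwise, reducing the remaining task to arranging $X\bidu=X$ and $j_X=\id_X$ on single objects.

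Finally I would strictify the pivotal structure. Since $j$ is a monoidal natural isomorphism $\Id_\CC\cong(-)\bidu$, the double-dual functor is monoidally isomorphic to the identity; I would use $j$ to reselect the duals of the generating objects so that taking the dual twice returns the object on the nose and $j_X$ becomes the identity, checking that this reselection is compatible with the letterwise duality on words and therefore does not disturb the already-trivialized $\xi$. One then verifies that the transported evaluation and coevaluation morphisms continue to satisfy the rigidity (zig-zag) identities, so that $\CC'$ is a genuine strict pivotal category and the evaluation functor $\CC'\to\CC$ is a pivotal equivalence.

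The main obstacle is this last coherence bookkeeping: trivializing $j$ and $\xi$ simultaneously requires that the double-dual reselection not reintroduce nontrivial structure into $\xi$, and verifying this compatibility is exactly where the axioms relating $j$ to the duality are used. Managing the combinatorics of duals of words together with the interaction between $j$ and $\xi$ is the delicate point; once it is settled, the pivotal axioms transport formally, yielding the desired strict pivotal category (cf. \cite{NS1}).
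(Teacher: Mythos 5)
First, a point of reference: the paper itself does not prove this theorem---it is quoted from Ng--Schauenburg \cite{NS1}---so your proposal has to be measured against the proof given there, whose general shape (strictify the monoidal structure, then realize duals syntactically on words with concatenation and reversal) your first two steps do parallel. The genuine gap is in your third step, the claim that one can ``use $j$ to reselect the duals of the generating objects so that taking the dual twice returns the object on the nose and $j_X$ becomes the identity.'' This asks for an involution $X \mapsto X^{*}$ on the \emph{objects} of $\CC$ refining the duality involution on isomorphism classes. Any such involution carries each self-dual isomorphism class to itself, and therefore has a fixed point whenever a self-dual class has odd cardinality---for instance always, if you feed in a skeletal category. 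A fixed point is a simple object with $X^{*}=X$ on the nose, and that is exactly what cannot be allowed.

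The obstruction, which your write-up never confronts, is the Frobenius--Schur indicator. In a strict pivotal category the duality data is a single assignment per object, so $X^{*}=X$ forces $\ev_{X^{*}}=\ev_X$ and $\db_{X^{*}}=\db_X$; feeding $f=\db_X$ into the operator $E^{(2)}_X$ from Section \ref{fs indicators} and applying the zig-zag identity gives $E^{(2)}_X(\db_X)=\bigl(\bigl[(\ev_X\otimes\id_X)(\id_X\otimes\db_X)\bigr]\otimes\id_X\bigr)\circ\db_X=\db_X$, hence $\nu_2(X)=+1$. Since $\nu_2$ is an invariant of pivotal categories, a simple object with $\nu_2(X)=-1$ (e.g.\ the two-dimensional irreducible in $\Rep(Q_8)$, $Q_8$ the quaternion group) can \emph{never} be equal to its own dual in any strict pivotal model. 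So your reselection is impossible as stated for skeletal input, and in general must be engineered to avoid fixed points on every quaternionic class---a need you never identify; indeed you locate the ``main obstacle'' in the $j$--$\xi$ bookkeeping, whereas this indicator constraint is the real one, and it is why a naive ``re-choose the duals'' argument fails. The proof in \cite{NS1} sidesteps it by making duals purely syntactic: letters are \emph{signed} symbols $X^{+}$, $X^{-}$, the dual of a word reverses the order and flips the signs, so the object-level involution exists by fiat, no simple object is ever forced to coincide with its dual, and the pivotal structure $j$ is used only to construct the evaluation functor and the duality morphisms, never to re-choose objects. With that replacement---formal duals instead of reselected duals---your outline becomes the standard argument.
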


    Frobenius-Schur indicators are indispensable invariants of spherical categories
    introduced in \cite{NS1}.  They are defined for each object in a pivotal
    category. Here, we only provide the definition of these indicators in a
    \emph{strict} spherical category. Let $n$ be a positive integer and $V$ an object of a strict spherical category
    $\CC$. We denote by $V^{\ot n}$ the $n$-fold tensor power of $V$. One can define a
    $\BC$-linear operator $E_V^{(n)}: \Hom_{\mcC}\(\1, V^{\ot n}\) \to
    \Hom_{\mcC}\(\1, V^{\ot n}\)$ given by
    \begin{equation*}
      E_V^{(n)}(f) = \left( \1 \xrightarrow{\db} V^{*} \o V \xrightarrow{\id_{V^{*}}
      \o f\o \id_{V}} V^{*} \o V^{\ot n+1} \xrightarrow{\ev \o \id_{V}^{\ot n}} V^{\ot n}
      \right)\,.
    \end{equation*}

    The $n$-th Frobenius-Schur indicator of $V$ is defined as
    \begin{equation*}
      \nu_n(V) = \Tr(E_V^{(n)})\,.
    \end{equation*}

    It follows directly from graphical calculus that $\left(E_V^{(n)}\right)^n
    =\id$, and so $\nu_n(V)$ is an algebraic integer in the $n$-th cyclotomic
    field $\BQ_n = \BQ(e^{\frac{2\pi i}{n}})$.

    The first indicator $\nu_1(V_i)$ is the Kronecker delta function $\delta_{0i}$ on
    $\Pi_\CC$, i.e. $\nu_1(V)=1$ if $V \cong \1$ and 0 otherwise. The second
    indicator is consistent with the classical Frobenius-Schur indicator of an
    irreducible representation of a group, namely $\nu_2(V)=\pm 1$ if $V \cong
    V^{*}$ and 0 otherwise for any simple object $V$ of $\CC$. The higher indicators
    are more obscure in nature, but they are all additive complex valued functions of
    the Grothendieck ring $K_0(\CC)$ of $\CC$.

    The classical definition of exponent
    of a finite group can be generalized to a spherical category via the following
    theorem \cite{NS2}.
    \begin{thm}
      Let $\CC$ be a spherical  category. There exists a positive integer $n$ such
      that $\nu_n(V) = d\(V\)$ for all $V \in \CC$. If $N$ is minimal among such $n$,
      then $d\(V\)$ are algebraic integers in $\BQ_{N}$.
    \end{thm}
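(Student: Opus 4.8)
The plan is to pass to the Drinfeld center and apply Vafa's theorem there, then read off the cyclotomic statement for free. Recall (M\"uger) that for a spherical fusion category $\CC$ over $\BC$ the Drinfeld center $Z(\CC)$ is a modular category; write $\tilde\theta_X$ for the twist and $\tilde d_X$ for the dimension of a simple object $X\in\Pi_{Z(\CC)}$, let $F\colon Z(\CC)\to\CC$ be the forgetful functor, and let $I$ be its two-sided adjoint. Since $Z(\CC)$ is in particular a ribbon fusion category, Vafa's theorem (as recorded above) guarantees that its $T$-matrix has finite order $m$, so every twist $\tilde\theta_X$ is an $m$-th root of unity.

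The technical heart is the indicator formula
\begin{equation*}
  \nu_n(V)=\frac{1}{D^2}\sum_{X\in\Pi_{Z(\CC)}}\tilde\theta_X^{\,n}\,\tilde d_X\,[F(X):V],
\end{equation*}
where $D^2$ is the global dimension of $\CC$ and $[F(X):V]=\dim\Hom_\CC(V,F(X))$ is the multiplicity of the simple object $V$ in $F(X)$. This identity is obtained by rewriting the graphical definition of $E_V^{(n)}$ inside $Z(\CC)$: the $n$-fold rotation operator whose trace is $\nu_n(V)$ is realized, after passing to the center, as the action of the $n$-th power of the twist on the relevant morphism space, and diagonalizing this action over $\Pi_{Z(\CC)}$ produces the weights $\tilde\theta_X^{\,n}$ (the sign of the exponent is immaterial for what follows). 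Establishing this formula rigorously, i.e. matching the rotation operator in $\CC$ with the twist of $Z(\CC)$, is the step I expect to be the \emph{main obstacle}, since it requires careful graphical calculus together with the identification of $E_V^{(n)}$ with a twist action in the center.

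Granting the formula, existence is immediate. Take any multiple $n$ of $m=\ord(\tilde T)$; then $\tilde\theta_X^{\,n}=1$ for all $X$ and the formula collapses to $\nu_n(V)=\frac{1}{D^2}\sum_X\tilde d_X\,[F(X):V]$. Since $F$ preserves quantum dimensions and, by adjunction and semisimplicity, $[F(X):V]=[I(V):X]$, the remaining sum is exactly $\dim I(V)=\dim FI(V)$. Using $FI(V)\cong\bigoplus_{W\in\Pi_\CC}W\otimes V\otimes W^{*}$ we compute its dimension as $\left(\sum_{W\in\Pi_\CC}d_W^{2}\right)d(V)=D^2\,d(V)$. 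Hence $\nu_n(V)=d(V)$ for every such $n$, which shows that the set of positive integers $n$ with $\nu_n(V)=d(V)$ for all $V\in\CC$ is nonempty.

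Finally, the cyclotomic claim is now formal. The nonempty set just produced has a minimal element $N$, and for this $N$ we have $d(V)=\nu_N(V)$ for every $V$. By the discussion preceding the statement, $\left(E_V^{(N)}\right)^N=\id$, so $\nu_N(V)$ is an algebraic integer lying in $\BQ_N$; therefore each $d(V)=\nu_N(V)$ is an algebraic integer in $\BQ_N$, as claimed.
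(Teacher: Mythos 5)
Your proposal is, in essence, the proof of Ng--Schauenburg \cite{NS2}, which is exactly where this paper sources the statement (it is quoted there without proof): the center formula you grant is their key theorem, the collapse at multiples of the order of the twist of $Z(\CC)$ via Vafa's theorem together with $FI(V)\cong\bigoplus_{W\in\Pi_\CC}W\otimes V\otimes W^{*}$ is their existence argument, and the cyclotomic integrality of $d(V)=\nu_N(V)$ is formal from $\bigl(E_V^{(N)}\bigr)^N=\id$. So your approach matches the cited original and your downstream computation is correct, with the honest caveat that the indicator formula you defer as the ``main obstacle'' is indeed the entire technical content of the cited result.
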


    The minimal integer $\FSexp(\CC):=N$ above is called the
    \defnfont{Frobenius-Schur exponent}. If $\CC$ is the category of complex
    representations of a finite group $G$, then $\FSexp(\CC) = \exp(G)$.
For modular categories the
      Frobenius-Schur indicators $\nu_n(V)$ are completely
      determined
      by the modular data of $\CC$, explicitly given in \cite{NS2} (generalizing the second
      indicator formula in \cite{Ban1}):
    \begin{thm}
      Let $\CC$ be a modular category with the $T$-matrix given by
      $[\delta_{ij}\theta_i]_{i,j \in \Pi_\CC}$. Then $\ord\(T\)=\FSexp(\CC)$, and
      \begin{equation}
        \label{eq:BF}
        \nu_n(V_k) = \frac{1}{D^{2}} \sum_{i,j \in \Pi_\CC} N_{i,j}^k\, d_i
        d_j\left(\frac{\theta_i}{\theta_j}\right)^n
      \end{equation}

      for all $k \in \Pi_\CC$ and positive integers $n$.
    \end{thm}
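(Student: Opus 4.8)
The statement combines two assertions: the identity $\ord(T)=\FSexp(\CC)$ and the indicator formula \eqref{eq:BF}. The plan is to establish \eqref{eq:BF} first by a direct categorical computation, and then to read off the exponent identity as an elementary consequence. By Theorem~\ref{t:str_piv} I may replace $\CC$ by a pivotally equivalent strict ribbon category, under which $\nu_n(V_k)=\Tr\bigl(E_{V_k}^{(n)}\bigr)$ is the trace of the ``cyclic rotation'' operator on $H_k:=\Hom(\1,V_k^{\ot n})$, a map built from $\db$ and $\ev$ with $\bigl(E_{V_k}^{(n)}\bigr)^n=\id$. Everything then reduces to evaluating this rotation-trace in terms of the modular data.

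For the formula itself I would argue graphically. The trace $\Tr\bigl(E_{V_k}^{(n)}\bigr)$ is the categorical evaluation of the closed ribbon graph obtained by closing up $n$ parallel $V_k$-colored strands with a single cyclic shift. Using the non-degeneracy of the braiding, I insert the completeness (``killing-ring'') resolution of the identity on the coupled strands: this expresses the closed diagram as a double sum over simple objects $V_i,V_j$, where the multiplicity of the pair $(i,j)$ is $\dim\Hom(V_k,V_i\ot V_j)=N_{i,j}^k$ and the two loops contribute the dimension factors $d_i d_j$. The $n$-fold cyclic rotation drags the strand of type $i$ past the strand of type $j$ a total of $n$ times; each passage is a full relative twist contributing the ratio $\theta_i/\theta_j$, so the net phase on the $(i,j)$-channel is $(\theta_i/\theta_j)^n$. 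Finally, the normalization forced by the non-degeneracy relation $\sum_i d_i^2=D^2$ (equivalently the orthogonality \eqref{eq:orthogonality} of $S$) supplies the prefactor $1/D^2$. Assembling these contributions yields exactly \eqref{eq:BF}. As a consistency check, substituting the Verlinde formula \eqref{Verlinde Formula} for $N_{i,j}^k$ into the right-hand side and using the twist equation \eqref{twisteq} recovers the usual $S,T$-expressions, and the case $n=2$ reproduces Bantay's formula \cite{Ban1}.

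Granting \eqref{eq:BF}, the identity $\ord(T)=\FSexp(\CC)$ follows. First, the fusion identity $\sum_{j}N_{i,j}^k d_j=d_i d_k$ (from $N_{i,j}^k=N_{i^*,k}^{\,j}$ and $d_{i^*}=d_i$) gives $\frac{1}{D^2}\sum_{i,j}N_{i,j}^k d_i d_j=d_k$. Hence if $\ord(T)\mid n$, then $\theta_i^n=1$ for all $i$, so $(\theta_i/\theta_j)^n=1$ and \eqref{eq:BF} collapses to $\nu_n(V_k)=d_k$ for every $k$. Conversely, suppose $\nu_n(V_k)=d_k$ for all $k$. Subtracting these two expressions and taking real parts (the left side is already real, since pairing $i\leftrightarrow j$ conjugates $(\theta_i/\theta_j)^n$ while fixing $N_{i,j}^k d_i d_j$) gives $\sum_{i,j}N_{i,j}^k d_i d_j\bigl(\cos(n\arg(\theta_i/\theta_j))-1\bigr)=0$; every summand is $\le 0$, so each term with $N_{i,j}^k\neq0$ must vanish, forcing $(\theta_i/\theta_j)^n=1$ whenever $V_k\subset V_i\ot V_j$. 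Since $V_i\ot V_j\neq0$ for every pair $(i,j)$, this holds for all $i,j$; taking $j=0$ (so $\theta_0=1$) gives $\theta_i^n=1$ for all $i$, i.e.\ $\ord(T)\mid n$. Thus $\{n:\nu_n(V_k)=d_k\ \forall k\}$ is exactly the set of multiples of $\ord(T)$, whose least element is $\ord(T)$; therefore $\FSexp(\CC)=\ord(T)$.

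The main obstacle is the middle paragraph: making the graphical evaluation rigorous. The delicate points are (i) verifying that the rotation-trace really decomposes into the claimed $(i,j)$-channels with multiplicity $N_{i,j}^k$, rather than into channels that mix under $E_{V_k}^{(n)}$; (ii) bookkeeping the framing so that each of the $n$ cyclic passages contributes precisely the relative twist $\theta_i/\theta_j$ with the correct sign convention (here the ribbon axiom $\theta_{V\ot W}=(\theta_V\ot\theta_W)\,c_{W,V}c_{V,W}$ and $\theta_{i^*}=\theta_i$ must be tracked carefully); and (iii) justifying the $1/D^2$ normalization from non-degeneracy. These are exactly the steps where modularity, as opposed to mere sphericity, is used, and where the computation must be carried out in full detail (as in \cite{NS2}).
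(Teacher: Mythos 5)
Two points, one structural and one mathematical. First, the paper does not prove this theorem at all: it is quoted from \cite{NS2}, where the argument does not proceed by directly evaluating the rotation trace graphically. Instead, one first proves, for an arbitrary spherical fusion category, that $\nu_n(V)=\frac{1}{\dim\CC}\ptr\big(\theta_{I(V)}^n\big)$, where $I:\CC\to Z(\CC)$ is the induction (adjoint to the forgetful functor) and $\theta$ is the twist of the modular center; then, for modular $\CC$, one uses the equivalence of $Z(\CC)$ with the Deligne product of $\CC$ and its reverse, under which $I(V_k)$ decomposes with multiplicities $N_{i,j}^k$ and twists $\theta_i\theta_j^{-1}$, yielding \eqref{eq:BF}. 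Your middle paragraph is an attempt to shortcut this, and it is the heart of the claim, but it is only a sketch --- as you yourself concede in the final paragraph. Note in particular that it cannot literally be a channel-by-channel trace decomposition of $E_{V_k}^{(n)}$ acting on $\Hom_{\mcC}\(\1,V_k^{\ot n}\)$: that space has integer dimension, while your claimed contributions carry the non-integral weights $d_id_j/D^2$. What you are really asserting is a diagrammatic identity between two closed ribbon graphs (insertion of a ``killing ring''), and that identity is precisely what must be proved; deferring it to \cite{NS2} means the formula, and hence the theorem, is not established by your argument.

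Second, your converse direction (that $\nu_n(V_k)=d_k$ for all $k$ forces $\ord(T)\mid n$) contains a genuine error: you claim each summand $N_{i,j}^k\,d_id_j\big(\cos(n\arg(\theta_i/\theta_j))-1\big)$ is $\leq 0$, but in a spherical (non-pseudo-unitary) modular category the dimensions $d_i$ are merely real and may be negative --- e.g.\ the Yang--Lee category has $d_1=(1-\sqrt5)/2<0$, so for $k=1$ the term with $(i,j)=(0,1)$ has negative weight $d_0d_1$. With mixed signs, the vanishing of the sum does not force each term to vanish. The step can be repaired by summing against $d_k$ rather than arguing termwise: using $\sum_k N_{i,j}^k d_k=d_id_j$, the hypothesis $\nu_n(V_k)=d_k$ for all $k$ gives $D^2=\sum_k d_k\nu_n(V_k)=\frac{1}{D^2}\big|\sum_i d_i^2\theta_i^n\big|^2$, i.e.\ $\big|\sum_i d_i^2\theta_i^n\big|=\sum_i d_i^2$. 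Since the weights $d_i^2$ are strictly positive, equality in the triangle inequality forces all $\theta_i^n$ to be equal, hence equal to $\theta_0^n=1$, so $\ord(T)\mid n$. With that correction your last paragraph is sound (the forward direction via $\sum_{i,j}N_{i,j}^k d_id_j=D^2d_k$ is fine), but the theorem as a whole still rests on the unproven formula \eqref{eq:BF}.
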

\subsubsection{Modular Data}\label{modulardata}
    \begin{definition}
      Let $S,T\in\GL_r(\BC)$ and define constants $d_j:=S_{0j}$, $\theta_j:=T_{jj}$, $D^2:=\sum_j d_j^2$
      and $p_{\pm}=\sum_{k=0}^{r-1}(S_{0,k})^2\th_{k}^{\pm1}$.  The pair $(S,T)$
      is an \defnfont{admissible modular data} of rank
      $r$ if they satisfy the following conditions:
      \begin{enumerate}
        \item $d_j\in\BR$ and $S=S^t$ with $S\overline{S}^t=D^2 \Id$.
          $T_{i,j}=\delta_{i,j}\theta_i$ with $N:=\ord(T)< \infty$.
        \item $(ST)^3=p^{+}S^2$, $p_{+}p_{-}=D^2$ and $\frac{p_{+}}{p_{-}}$ is a root of unity.
        \item $N_{i,j}^k:=\frac{1}{D^2} \sum_{a=0}^{r-1} \frac{S_{ia} S_{ja}
          \overline{S_{ka}}}{S_{0a}}\in\BN$ for all $0\leq i,j,k\leq (r-1)$.
        \item   $\theta_i\theta_j
          S_{ij}=\sum_{k=0}^{r-1} N_{i^*j}^kd_k\theta_k$ where $i^*$ is the unique label such that $N_{i,i^*}^0=1$.
        \item Define $\nu_n(k): = \frac{1}{D^2} \sum_{i,j =0}^{r-1} N_{i,j}^k\,
          d_i d_j\left(\frac{\theta_i}{\theta_j}\right)^n$. Then
          $\nu_2(k)=0$ if $k\neq k^*$ and $\nu_2(k)=\pm 1$ if $k=k^*$.  Moreover,
          $\nu_n(k)\in \BZ[e^{2 \pi i/N}]$ for all $n,k$.
        \item $\mbbF_S\subset \BF_T=\BQ_N$, $\Gal(\mbbF_S/\BQ)$ is isomorphic to an
          abelian subgroup of $\mathfrak{S}_r$ and $\Gal(\mbbF_T/\mbbF_S)\cong (\BZ/2\BZ)^\ell$ for some integer $\ell$.
        \item The prime divisors of $D^{2}$ and $N$ coincide in
        $\BZ[e^{2 \pi i/N}]$.\footnote{See \sectionref{subsection: The Cauchy Theorem}.}
      \end{enumerate}
    \end{definition}
%

    \begin{theorem}
      \label{realizeadmiss}
      Let $\(S,T\)$ be a realizable modular data. Then
      \begin{itemize}
        \item[(a)] $\(S,T\)$ is admissible and
        \item[(b)] For all $\s\in\Aut(\BQA)$,
        $\(\s\(S\),\s\(T\)\)$ is realizable.
      \end{itemize}
    \end{theorem}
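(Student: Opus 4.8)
I would treat the two parts separately, since they are logically independent. For part (a) the plan is to read each of the seven admissibility conditions directly off the established properties of the modular data of a genuine modular category $\CC$ realizing $(S,T)$, most of which are recorded in the previous section. Explicitly: (i) is the reality of $d_j=S_{0j}$ (sphericity), the symmetry $S=S^t$, the identity $S\overline{S}^t=D^2\Id$ obtained from \eqref{eq:orthogonality} together with $S=S^t$, and Vafa's theorem for $\ord(T)<\infty$; (ii) is \eqref{eq:STrelations}, \eqref{eq:gausssum} and the root-of-unity anomaly; (iii) is the Verlinde formula, with $S_{k^*a}=\overline{S_{ka}}$ coming from \eqref{eq:orthogonality} and $S=S^t$; (iv) is the balancing equation \eqref{Balancing}, where $i^*$ is singled out by $N_{i,i^*}^0=1$; and the $\nu_2$-part of (v) is the Frobenius--Schur theorem, the general formula being \eqref{eq:BF}.

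The arithmetic clauses require slightly more care. For the integrality in (v), $\nu_n(V_k)$ is an algebraic integer and, by \eqref{eq:BF}, lies in $\BQ_N$ (it is built from $d_i\in\BF_S\subseteq\BQ_N$ and $N$-th roots of unity), hence it lies in the full ring of integers $\BZ[e^{2\pi i/N}]$ of $\BQ_N$. Clause (vi) assembles \thmref{t:cong1} (giving $\BF_S\subseteq\BF_T=\BQ_N$), the theorem of \cite{BG1, RSW} that $\Gal(\BF_S/\BQ)$ is an abelian subgroup of $\mathfrak{S}_r$, and Lemma \ref{l:2group}: for a normalized modular pair $(s,t)$ one has $\BF_S\subseteq\BF_T\subseteq\BF_t$ (since $N\mid n$ by \thmref{t:Galois}, whence $\BQ_N\subseteq\BQ_n$), and $\Gal(\BF_T/\BF_S)$, being a quotient of the elementary abelian $2$-group $\Gal(\BF_t/\BF_S)$, is again elementary abelian. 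Finally, (vii) is exactly our Cauchy theorem for spherical fusion categories applied to $\CC$, using $\FSexp(\CC)=\ord(T)=N$.

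For part (b) the idea is Galois conjugation of the realizing category itself. Using that every fusion category over $\BC$ can be defined over $\overline{\BQ}$ (a consequence of Ocneanu rigidity \cite{ENO1}), I would fix a model of $\CC$ whose associativity, braiding and pivotal data lie in a number field, extend $\sigma\in\Aut(\BQA)$ to an automorphism $\tilde\sigma$ of $\overline{\BQ}$, and let $\CC^{\tilde\sigma}$ be the category obtained by applying $\tilde\sigma$ to all these scalars. The pentagon, hexagon, rigidity, pivotal and twist axioms are polynomial identities with integer coefficients, hence preserved by $\tilde\sigma$; the label set and fusion rules (being integer Hom-space dimensions) are unchanged; and the $S$- and $T$-matrices of $\CC^{\tilde\sigma}$, indexed by the same labels, are the entrywise images $\tilde\sigma(S)=\sigma(S)$ and $\sigma(T)$, since these entries already lie in $\BQA$. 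Non-degeneracy survives because $\det\sigma(S)=\sigma(\det S)\neq 0$.

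The delicate point is that $\CC^{\tilde\sigma}$ remain spherical, i.e.\ that its dimensions $\sigma(d_i)$ be real. I must check this, and it holds because $d_i\in\BF_S\cap\BR\subseteq\BQ_N\cap\BR=\BQ_N^+$, and the maximal real subfield $\BQ_N^+$---the fixed field of complex conjugation, which is central in the abelian group $\Gal(\BQ_N/\BQ)$---is stable under $\sigma$, so $\sigma(d_i)\in\BQ_N^+\subseteq\BR$. Hence $\CC^{\tilde\sigma}$ is a modular category with modular data $(\sigma(S),\sigma(T))$, giving realizability. I expect the main obstacle to be the careful invocation of definability over $\overline{\BQ}$ and the simultaneous verification that conjugation respects every categorical axiom; the reality of the conjugated dimensions is the easily-overlooked subtlety on which survival of the spherical, and hence modular, structure depends.
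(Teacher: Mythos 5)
Your proposal is correct and takes essentially the same approach as the paper: part (a) amounts to matching each admissibility clause against the previously established properties of modular data (including the Cauchy theorem for clause (vii)), which is exactly what the paper means by ``follows from the definition,'' and part (b) is the Galois-conjugation-of-the-category argument that the paper simply cites from \cite[Section 2.7]{ENO1} (see also \cite{DHW1}). Your write-up merely supplies the details that the paper's two-line proof delegates to the references.
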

    \begin{proof}
      (a) follows from the definition of admissible modular data, while (b)
      follows from \cite[Section 2.7]{ENO1} (see also \cite{DHW1}).
    \end{proof}

    \begin{remark}
      We expect a converse of Theorem \ref{realizeadmiss} to be true: that is, if
      $(S,T)$ is admissible then it is realizable.  Indeed, a satisfactory definition
      of admissible would be a minimal set of conditions that guarantee realizability.
    \end{remark}

\section{Rank-Finiteness and the Cauchy Theorem}
  \label{Rank Finiteness and the Cauchy Theorem}
  The main goal of this section is to prove the following
  theorem, conjectured by the fourth author in 2003 (see \cite{W3}):

  \begin{theorem}[Rank-Finiteness Theorem]
    \label{Rank Finiteness}
    There are only finitely many modular categories of fixed rank $r$, up to
    equivalence.
  \end{theorem}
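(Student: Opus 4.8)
**

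The plan is to reduce Rank-Finiteness to a bound on the global dimension $D^2$ in terms of the rank $r$, and then to obtain such a bound by treating the dimension equation as an $\mcS$-unit equation. The strategy mirrors Landau's classical proof that there are finitely many finite groups with a fixed number of conjugacy classes, with the dimension equation $D^2=\sum_{i\in\Pi_\CC}d_i^2$ playing the role of the class equation $|G|=\sum_i[G:C(g_i)]$.

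The first reduction I would invoke is Ocneanu rigidity together with the Verlinde formula: by \cite{ENO1}, for a fixed set of fusion rules there are only finitely many equivalence classes of modular categories, so it suffices to bound the number of possible fusion rules in each rank. Since the fusion rules are determined by the $S$-matrix via \eqref{Verlinde Formula}, and the entries $S_{ij}$ are algebraic integers in $\BQ_N$ whose sizes are controlled once $D^2$ is bounded, it is enough (by \cite[Prop. 6.2]{RSW}) to establish that $D^2$ takes only finitely many values for each fixed $r$. Thus the whole problem collapses to bounding $D^2$ in terms of $r$.

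To bound $D^2$ I would exploit the Cauchy Theorem (Theorem~\ref{Cauchy Theorem for Modular Categories}), which identifies the set $\mcS$ of prime ideals of $\BZ[e^{2\pi i/N}]$ dividing $(D^2)$ with those dividing $(N)$. The key consequence is that each quantum dimension $d_i$, as well as $D^2$, is an $\mcS$-unit: indeed $d_i$ is an algebraic integer in $\BQ_N$, and since $\nu_N(V_i)=d_i$ and the product of the $d_i$ controls $D^2$, the primes appearing in the factorization of the $d_i$ all lie in $\mcS$. Having established that the $d_i^2/D^2$ are $\mcS$-units summing to $1$, I would write the normalized dimension equation $1=\sum_{i\in\Pi_\CC}(d_i/D)^2$ and apply Evertse's theorem \cite{Ev1}: for a fixed number of terms $r$ and a fixed set $\mcS$, the $\mcS$-unit equation $1=\sum_{i=1}^r x_i$ has only finitely many non-degenerate solutions. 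After partitioning into vanishing subsums to handle degenerate solutions (a standard inductive device), this yields finitely many possible tuples of normalized dimensions, hence finitely many values of $D^2$.

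The main obstacle is the passage from the abstract finiteness of $\mcS$-unit solutions to a genuine bound on $D^2$ that depends only on $r$ and not on $N$, since a priori $\mcS$ and $N$ themselves are unconstrained. The delicate point is that Evertse's bound on the \emph{number} of solutions is uniform in the $\mcS$-units once $|\mcS|$ and $r$ are fixed, but $|\mcS|$ is bounded by the number of prime divisors of $N$, which is not controlled by $r$ alone. Overcoming this requires using the Cauchy theorem more carefully — leveraging the coincidence of prime divisors of $D^2$ and $N$ to bound $N$ itself, together with the congruence and Galois-symmetry constraints (Theorems~\ref{t:cong1} and \ref{c:galsym}, Lemma~\ref{l:2group}) that tie the arithmetic of $N$ to the rank $r$. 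Once $N$ is bounded in terms of $r$, the set $\mcS$ is finite and controlled, Evertse's theorem applies uniformly, and the desired bound on $D^2$ follows.
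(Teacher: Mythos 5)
Your proposal follows essentially the same route as the paper: Ocneanu rigidity plus the Verlinde formula reduce the problem to bounding $D^2$, the Cauchy theorem makes $D^2$ and the $d_i$ into $\mcS$-units, Evertse's theorem bounds the solutions of the dimension equation, and the key obstacle you correctly identify---bounding $N=\ord(T)$ in terms of $r$---is exactly what the paper's Proposition~\ref{201301161421} supplies, using Lemma~\ref{l:2group} and the fact that $\Gal(\BF_S/\BQ)$ is an abelian subgroup of $\mathfrak{S}_r$. Two small points of difference: the bound on $N$ is pure Galois theory and does not use the Cauchy theorem at all (the coincidence of prime divisors of $D^2$ and $N$ enters only afterwards, to get the $\mcS$-unit property), and positivity of the $d_i^2$ makes the dimension equation automatically a \emph{proper} $\mcS$-unit equation, so your inductive vanishing-subsum device is unnecessary.
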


  Prior to this work this conjecture had only been resolved in certain restricted
  cases, for instance it was shown \cite[Proposition 8.38]{ENO1} that there are
  finitely many \emph{weakly integral fusion} categories of a given fixed rank through a  number
  theoretic argument similar to that of Landau \cite{L1}.

  The proof of the Rank-Finiteness Theorem relies upon several well-known reductions, a new result known as the
  Cauchy Theorem (for Spherical Fusion Categories \ref{Cauchy Theorem for Modular
  Categories}) and some results in analytic number theory due to Evertse \cite{Ev1}.

  In \sectionref{subsection: The Cauchy Theorem}, the Cauchy Theorem for Spherical Fusion
  Categories is proved, and in \sectionref{subsection:
  Rank Finiteness} we prove Theorem \ref{Rank Finiteness}.  We discuss asymptotics related to Theorem \ref{Rank Finiteness} in \sectionref{asymptotics}.

  \subsection{The Cauchy Theorem}
  \label{subsection: The Cauchy Theorem}

Let $\mathbb{A}$ be the ring of algebraic integers in $\mbbC$. For $a, b, c \in
\mathbb{A}$ with $a \ne 0$, $b \equiv c \bmod a$ means that $(b-c)/a \in
\mathbb{A}$.

Suppose $\CC$ is a modular category with $N=\FSexp(\CC)$ and $q$ is prime with
$(q,N)=1$.
We begin with a simple lemma, which is essentially proved in \cite[Lem. 1.8]{Wa}
and \cite[Section 3.4]{KSZ1}.
\begin{lem}\label{l1}
  Let $W$ be a finite-dimensional $\mbbC$-linear space. If $E$ is a $\mbbC$-linear operator on $W$ such that $E^q =\id_W$ for some prime number $q$, then
  $$\Tr(E)^q  \equiv \dim_\mbbC W \bmod q\,.$$
  In particular, if $\Tr(E) \in \BZ$, then $\Tr(E) \equiv \dim_\mbbC W \bmod q$.
\end{lem}
\begin{proof}
  Let $\zeta_q\in \mbbC$ denote a primitive $q$-th root of unity.
  Then $\Tr(E)= \sum_{i=0}^{q-1} m_i \zeta_q^i$, where $m_i$ is the multiplicity of the eigenvalue $\zeta_q^i$. Thus,
  \begin{equation*}
    \Tr(E)^q \equiv \sum_{i=0}^{q-1} m_i^q \equiv \sum_{i=0}^{q-1} m_i =
    \dim_\mbbC W \bmod q\,.
  \end{equation*}

  In particular, if $\Tr(E) \in \BZ$,  the second statement follows from  Fermat's little theorem.
\end{proof}

Recall that  the $n$-th Frobenius-Schur indicator $\nu_n(V)$ for $V \in \CC$ is
defined as the trace of a $\mbbC$-linear operator $E^{(n)}_{V} : \Hom_{\mcC}\(\1,
V^{\ot n}\) \to \Hom_{\mcC}\(\1, V^{\ot n}\)$.  This operator $E_V^{(n)}$ satisfies
\begin{equation*}
  \left(E_V^{(n)}\right)^n= \id\,.
\end{equation*}

Moreover, $\nu_n(V)$ is an algebraic integer in $\BQ_n\cap\BQ_N$. Since $q$ and
$N$ are relatively prime, we have
\begin{equation*}
  \nu_q(V) \in \BQ_N \cap \BQ_q =\BQ\,.
\end{equation*}

Thus $\nu_q(V) \in \BZ$. By the preceding lemma, we have proved
\begin{lem} \label{l2}
  For any $V \in \CC$, $\nu_q(V) \in \BZ$ and we have
  \begin{equation*}
    \nu_q(V) \equiv \dim_\mbbC \Hom_{\mcC}\(\1, V^{\ot q}\) \bmod q\,.
  \end{equation*}
\end{lem}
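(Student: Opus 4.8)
The plan is to obtain the congruence as a direct application of the preceding Lemma \ref{l1} to the operator $E := E_V^{(q)}$ acting on the finite-dimensional space $W := \Hom_{\mcC}\(\1, V^{\ot q}\)$ (finite-dimensionality is guaranteed by the fusion axioms). Two inputs are required: that $\left(E_V^{(q)}\right)^q = \id_W$, which is exactly the order relation recalled from the graphical calculus above; and that $\Tr(E) = \nu_q(V)$ is an \emph{ordinary} integer, so that the ``in particular'' clause of Lemma \ref{l1} applies rather than merely its $q$-th-power form.

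All the substance therefore lies in the integrality $\nu_q(V) \in \BZ$, which I would establish by intersecting two cyclotomic fields. On one side, since $\left(E_V^{(q)}\right)^q = \id$ the eigenvalues of $E_V^{(q)}$ are $q$-th roots of unity, so $\nu_q(V) = \Tr(E_V^{(q)})$ is an algebraic integer lying in $\BQ_q$. On the other side, for a modular category every Frobenius-Schur indicator lies in $\BQ_N$ with $N = \FSexp(\CC)$: this is visible from the explicit formula \eqref{eq:BF}, whose ingredients $S$ and the twists $\theta_i$ all lie in $\BQ_N$ by Theorem \ref{t:cong1}. Hence $\nu_q(V) \in \BQ_q \cap \BQ_N$. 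Since $(q,N)=1$, the standard intersection formula for cyclotomic fields gives $\BQ_q \cap \BQ_N = \BQ_{\gcd(q,N)} = \BQ$, and an algebraic integer in $\BQ$ is a rational integer; thus $\nu_q(V) \in \BZ$.

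With integrality secured, the conclusion is immediate: the integral case of Lemma \ref{l1} applied to $E = E_V^{(q)}$ on $W = \Hom_{\mcC}\(\1, V^{\ot q}\)$ yields $\nu_q(V) \equiv \dim_\mbbC \Hom_{\mcC}\(\1, V^{\ot q}\) \bmod q$. I do not anticipate any genuine obstacle in this lemma; the only place where real work is consumed is the cyclotomic intersection $\BQ_q \cap \BQ_N = \BQ$, and this is precisely where the coprimality hypothesis $(q,N)=1$ enters the argument. Everything else is bookkeeping of the two facts $\left(E_V^{(q)}\right)^q = \id$ and $\Tr(E_V^{(q)}) = \nu_q(V)$, both already in place from the discussion of higher $FS$-indicators above.
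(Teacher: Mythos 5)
Your proof is correct and follows essentially the same route as the paper: establish $\nu_q(V) \in \BQ_q \cap \BQ_N = \BQ$ (hence in $\BZ$, being an algebraic integer) using $(q,N)=1$, then apply the integral case of Lemma \ref{l1} to $E_V^{(q)}$ acting on $\Hom_{\mcC}\(\1, V^{\ot q}\)$. The only cosmetic difference is that you justify $\nu_q(V)\in\BQ_N$ via the explicit formula \eqref{eq:BF}, while the paper invokes the general fact from \cite{NS2} that indicators of a spherical fusion category lie in $\BQ_N$; both are valid in the modular setting at hand.
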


Let $\OO_N$ be the ring of algebraic integers of $\BQ_N$. It is well known that
$\OO_N=\BZ[\zeta_N]$, where $\zeta_N$ is a primitive $N$-th root of unity in
$\mbbC$. Set $K_N(\CC) = K_0(\CC) \o_\BZ \OO_N$. Then $K_N(\CC)$ is an
$\OO_N$-algebra. For any non-zero element $a \in \OO_N$ and $\a, \b \in
K_N(\CC)$, we write $\a \equiv \b \bmod a$ if $\a - \b= a \g$ for some $\g
\in K_N(\CC)$.

By \cite{NS2}, $\nu_q: K_0(\CC) \to \BZ$ is a group homomorphism; however, the
assignment $V \mapsto \dim_\mbbC \Hom_{\mcC}\(\1, V^{\ot q}\) $ is not. We can extend the $\nu_q$
to an $\OO_N$-linear map from $K_N(\CC)$ to $\OO_N$, and we continue to denote
such an extension by $\nu_q$. Similarly, we can extend the dimension function
$d: K_0(\CC) \to \OO_N$ to an $\OO_N$-linear map from $K_N(\CC)$ to $\OO_N$.
However, it is important to note that this  extension is an
$\OO_N$-\emph{algebra} homomorphism.

Note that $K_N(\CC)$ is a free $\OO_N$-module with $\Pi_\CC$ as a basis. For
$\a =
\sum_{i \in \Pi_\CC}\a_i i \in K_N(\CC)$, we define $\d(\a) = \a_0$. Obviously,
$\d: K_N (\CC)\to \OO_N$ is $\OO_N$-linear. Although $\d(\a^q)$ is not
$\OO_N$-linear in $\a$, it is $\BZ$-linear modulo $q$.
\begin{lem}\label{l3}
  For $\a \in K_N(\CC)$,  we have
  \begin{equation*}
    \d(\a^q) \equiv \sigma_q(\nu_q(\a)) \bmod q
  \end{equation*}
  where $\s_q \in \Gal(\BQ_N/\BQ)$ is defined by $\s_q(\zeta_N)= \zeta_N^q$.
\end{lem}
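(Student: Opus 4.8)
The plan is to reduce the identity to a congruence in $K_N(\CC)/qK_N(\CC)$ and then to exploit two separate Frobenius phenomena: one coming from the characteristic-$q$ structure of the Grothendieck algebra, and one coming from the cyclotomic arithmetic of $\OO_N$. Write $\a=\sum_{i\in\Pi_\CC}\a_i\,i$ with $\a_i\in\OO_N$. Since $\CC$ is braided, $K_0(\CC)$ is commutative, hence so is $K_N(\CC)$, and $K_N(\CC)/qK_N(\CC)$ is a commutative ring of characteristic $q$. First I would invoke the ``freshman's dream'': in any commutative ring the multinomial coefficients $\binom{q}{k_1,\dots,k_m}$ with more than one nonzero $k_j$ are divisible by the prime $q$, so
\begin{equation*}
  \a^q \equiv \sum_{i\in\Pi_\CC}\a_i^q\, i^q \bmod q
\end{equation*}
in $K_N(\CC)$.

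Next I would apply the $\OO_N$-linear functional $\d$ to this congruence. By $\OO_N$-linearity, $\d(\a^q)\equiv\sum_i\a_i^q\,\d(i^q)\bmod q$ in $\OO_N$, and because $i^q$ is the class of $V_i^{\ot q}$ we have $\d(i^q)=\dim_\mbbC\Hom_{\mcC}(\1,V_i^{\ot q})$. Lemma \ref{l2} then gives $\d(i^q)\equiv\nu_q(i)\bmod q$, and since each $\nu_q(i)\in\BZ$ this yields
\begin{equation*}
  \d(\a^q)\equiv\sum_{i\in\Pi_\CC}\a_i^q\,\nu_q(i)\bmod q.
\end{equation*}

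To finish I would bring in the arithmetic Frobenius. Because $(q,N)=1$, I claim that for every $x\in\OO_N=\BZ[\zeta_N]$ one has $x^q\equiv\s_q(x)\bmod q\OO_N$: writing $x$ in powers of $\zeta_N$, applying the freshman's dream again in $\OO_N/q\OO_N$, and reducing the integer coefficients by Fermat turns $x^q$ into $\s_q(x)$, since $\s_q$ simply raises $\zeta_N$ to the $q$-th power. Applying this to each $\a_i$, and using that $\nu_q(i)\in\BZ$ is fixed by $\s_q$ together with the $\OO_N$-linearity of $\nu_q$, I obtain
\begin{equation*}
  \sum_i\a_i^q\,\nu_q(i)\equiv\sum_i\s_q(\a_i)\,\s_q(\nu_q(i))=\s_q\!\left(\sum_i\a_i\,\nu_q(i)\right)=\s_q(\nu_q(\a))\bmod q,
\end{equation*}
and chaining the two congruences gives the claim. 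The point I would be most careful about is that both Frobenius steps must be valid modulo the full integer $q$ (i.e. modulo $q\OO_N$), not merely modulo a single prime of $\OO_N$ above $q$; this is exactly what makes the $q$-th-power map and the Galois action $\s_q$ interchangeable, and it is the mechanism that converts the manifestly non-$\OO_N$-linear quantity $\d(\a^q)$ into the $\s_q$-twist of the $\OO_N$-linear quantity $\nu_q(\a)$.
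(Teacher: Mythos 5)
Your proof is correct and follows essentially the same route as the paper's: the freshman's dream in the commutative ring $K_N(\CC)$ modulo $q$, application of the $\OO_N$-linear functional $\d$ together with Lemma \ref{l2} to replace $\d(i^q)$ by $\nu_q(V_i)$, and then the arithmetic Frobenius congruence $x^q\equiv\s_q(x)\bmod q\OO_N$ combined with the $\OO_N$-linearity of $\nu_q$ and the integrality of the $\nu_q(V_i)$. Your closing remark that all congruences hold modulo the full ideal $q\OO_N$ (not merely a prime above $q$) is a correct and careful observation, implicit in the paper's argument.
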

\begin{proof}
  Let $\a = \sum_{i \in \Pi_\CC} \a_i i$. Then $\a^q \equiv \sum_{i \in \Pi_\CC}
\a_i^q i^q \bmod q$. Since $\d$ is $\OO_N$-linear, we have
  \begin{equation*}
    \d(\a^q) \equiv \d(\sum_{i \in \Pi_\CC} \a_i^q i^q) =\sum_{i \in \Pi_\CC}
\a_i^q \d(i^q)\bmod q\,.
  \end{equation*}

  Since $\d(i^q) = \dim_\mbbC \Hom_{\mcC}(\1,V_i^{\ot q})$, it follows from Lemma \ref{l2} that
  \begin{equation*}
    \d(i^q)  \equiv \nu_q(V) \bmod q\,.
  \end{equation*}

  Thus, we find
  \begin{equation*}
    \d(\a^q) \equiv \sum_{i \in \Pi_\CC} \a_i^q \nu_q(V_i)  \bmod q\,.
  \end{equation*}

  Note that for $a \in \OO_N$, $a = \sum_j a_j \zeta_N^j$ where $a_j \in \BZ$. Therefore,
  \begin{equation*}
    a^q \equiv \sum_j a_j^q \zeta_N^{qj} \equiv \sum_j a_j \sigma_q (\zeta_N^j) = \sigma_q(a) \bmod q\,.
  \end{equation*}

  Hence, we have
  \begin{equation*}
    \d(\a^q) \equiv \sum_{i \in \Pi_\CC} \sigma_q(\a_i) \nu_q(V_i)
=\sigma_q(\nu_q(\a)) \bmod q\,.
  \end{equation*}

  The last equality follows from the $\OO_N$-linearity of $\nu_q$, and
$\nu_q(V_i) \in \BZ$ for all $i \in \Pi_\CC$.
\end{proof}

By \cite{NS2}, $d_i \in \OO_N$ for $i \in \Pi_\CC$. Therefore, $R=\sum_{i \in
\Pi_\CC}
d_i i$ is an  element of $K_N(\CC)$.

Notice that $R$ defines a rank 1 ideal of $K_N(\CC)$ as $i R = d_i R$ for all
$i \in \Pi_\CC$. Thus, for $\a \in K_N(\CC)$, $\a R = d(\a) R$, where $d:
K_N(\CC) \to \OO_N$ is the extended dimension function.
 Therefore,
\begin{equation*}
  R^n =  R^{n-1} R = d(R^{n-1})  R = D^{2(n-1)} R\,.
\end{equation*}

Now, we can write our first proposition for the indicators of the \textit{virtual} object $R$.
\begin{prop}\label{p1}
  Let $R=\sum_{i \in \Pi_\CC} d_i i \in K_N(\CC)$, and $q$ a prime number not
dividing
  $N$. Then we have
  \begin{equation*}
    \s_q\left(\nu_q(R)\right) \equiv D^{2(q-1)} \bmod q\,.
  \end{equation*}
\end{prop}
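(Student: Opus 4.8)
The plan is to combine Lemma~\ref{l3} with the rank-one relation $R^n = D^{2(n-1)}R$ established just above the statement. Applying Lemma~\ref{l3} directly to $\a = R$ gives
\begin{equation*}
  \d(R^q) \equiv \s_q(\nu_q(R)) \bmod q\,,
\end{equation*}
so everything reduces to computing $\d(R^q)$ on the left-hand side. First I would use the relation $R^q = D^{2(q-1)}R$ to rewrite $\d(R^q) = \d(D^{2(q-1)}R)$. Since $\d$ is $\OO_N$-linear and $D^2$ is a scalar in $\OO_N$, this pulls out as $D^{2(q-1)}\d(R)$.

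Next I would evaluate $\d(R)$. By definition $R = \sum_{i\in\Pi_\CC} d_i\, i$, and $\d$ extracts the coefficient of the unit object $0\in\Pi_\CC$; since $d_0 = d(\1) = 1$, we get $\d(R) = d_0 = 1$. Therefore $\d(R^q) = D^{2(q-1)}$, and substituting back yields
\begin{equation*}
  \s_q(\nu_q(R)) \equiv \d(R^q) = D^{2(q-1)} \bmod q\,,
\end{equation*}
which is exactly the claim.

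I do not expect any serious obstacle here: the proposition is essentially a one-line corollary of Lemma~\ref{l3} once the identity $\d(R^q)=D^{2(q-1)}$ is recognized. The only point requiring minor care is the bookkeeping of the congruence in $K_N(\CC)$ versus in $\OO_N$: Lemma~\ref{l3} is stated as a congruence modulo $q$ in $\OO_N$, and the relation $R^q = D^{2(q-1)}R$ is an exact identity in $K_N(\CC)$, so applying the $\OO_N$-linear functional $\d$ to the exact identity is legitimate and the resulting congruence mod $q$ holds in $\OO_N$ as required. The main subtlety worth double-checking is simply that $\d(R)=1$ relies on $d_0=1$ (the quantum dimension of the unit), which is immediate from $d(\1)=\nu_N(\1)$ or directly from $S_{00}$ conventions, so no additional input is needed.
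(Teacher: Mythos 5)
Your proposal is correct and follows essentially the same route as the paper: the paper's proof likewise applies Lemma~\ref{l3} to $\a = R$, uses the identity $R^q = D^{2(q-1)}R$ from the preceding discussion, and evaluates $\d(R) = d_0 = 1$ to conclude. Your extra remarks about the legitimacy of applying the $\OO_N$-linear map $\d$ and about $d_0 = 1$ are sound but just make explicit what the paper leaves implicit.
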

\begin{proof}
  By the preceding discussion, we have $R^q = D^{2(q-1)} R$. Since $\d(R)=d_0 =1$,
  we have $\delta(R^q) = D^{2(q-1)}$. By Lemma \ref{l3},
  \begin{equation*}
    \sigma_q(\nu_q(R)) \equiv D^{2(q-1)}  \bmod q\,. \qedhere
  \end{equation*}
\end{proof}

\begin{prop}
  \label{p3}
  For any $\s \in \Gal(\BQ_N/\BQ)$, $d_{\hs(0)}$ is a unit of $\OO_N$.
\end{prop}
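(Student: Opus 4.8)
The plan is to prove the sharper identity $d_{\hs(0)}^2 = D^2/\s(D^2)$ and then read off that its right-hand side is a unit of $\OO_N$. Granting the identity, the unit conclusion is immediate: since $d_{\hs(0)} \in \OO_N$ we get $d_{\hs(0)}^2 \in \OO_N$, so $\s(D^2)$ divides $D^2$ in $\OO_N$; but the field norm of $\BQ_N/\BQ$ is invariant under $\Gal(\BQ_N/\BQ)$, so $\s(D^2)$ and $D^2$ have equal norm, and an algebraic integer that divides another of equal norm is an associate of it. Hence $D^2/\s(D^2) = d_{\hs(0)}^2$ is a unit, and therefore so is $d_{\hs(0)}$.

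To produce the identity I would start from the Galois symmetry \eqref{eq:galois1} specialized to $a = 0$. Since $\phi_0(i) = S_{i0}/S_{00} = d_i$ and $\phi_{\hs(0)}(i) = S_{i\hs(0)}/d_{\hs(0)}$, this reads $S_{i\hs(0)} = \s(d_i)\, d_{\hs(0)}$ for every $i \in \Pi_\CC$. The key observation is that each conjugate $\s(d_i)$ is real: complex conjugation restricts on $\BQ_N$ to $\zeta_N \mapsto \zeta_N^{-1}$, an element of the abelian group $\Gal(\BQ_N/\BQ)$ that fixes the real number $d_i \in \BQ_N$, so commuting it past $\s$ gives $\ol{\s(d_i)} = \s(d_i)$. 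Feeding $S_{i\hs(0)} = \s(d_i)\, d_{\hs(0)}$ into the column orthogonality of $S$ (use $S = S^t$ from \eqref{eq:S} together with \eqref{eq:orthogonality} at $i = k = \hs(0)$) then collapses everything to
\[
  D^2 = \sum_{i \in \Pi_\CC} |S_{i\hs(0)}|^2 = d_{\hs(0)}^2 \sum_{i \in \Pi_\CC} \s(d_i)^2 = d_{\hs(0)}^2\,\s(D^2),
\]
where reality removes all cross terms and the last step uses $\s(D^2) = \sum_i \s(d_i)^2$.

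I expect the main obstacle to be arranging this clean collapse, and in particular being careful about two points. First, a normalized pair $s = S/D$ need not have entries in $\BQ_N$, but the ratios $s_{ia}/s_{0a} = S_{ia}/S_{0a} = \phi_a(i)$ do lie in $\BQ_N$; this is what makes \eqref{eq:galois1} a genuine identity over $\BQ_N$ to which $\s \in \Gal(\BQ_N/\BQ)$ may be applied, and it is where I would invoke the inclusion $\BF_S \subseteq \BQ_N$. Second, the reality of the $\s(d_i)$ is essential: without it the sum $\sum_i |S_{i\hs(0)}|^2$ would not factor as $d_{\hs(0)}^2\,\s(D^2)$. Once these are secured the remainder is formal, and as a byproduct one sees that $d_{\hs(0)}^2 = 1$ exactly when $D^2$ is fixed by $\s$.
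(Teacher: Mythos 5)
Your proof is correct and takes essentially the same route as the paper's: both rest on the identity $d_{\hs(0)}^2 = D^2/\s(D^2)$ and conclude by observing that this quotient is an algebraic integer of field norm $1$, hence a unit of $\OO_N$. The only difference is expository: the paper asserts $\s(1/D^2) = d_{\hs(0)}^2/D^2$ without derivation, while you supply the missing details via \eqref{eq:galois1}, the reality of the conjugates $\s(d_i)$, and the orthogonality of the $S$-matrix columns.
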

\begin{proof}
  Without loss of generality, we may assume  $\hs(0)=1$. Then
  \begin{equation*}
    \s(\frac{1}{D^2}) = d^2_1/D^2
    \quad \text{or}\quad d_1^2 = D^2/\s(D^2).
  \end{equation*}
  Obviously, the norm of $D^2/\s(D^2)$ is 1, and so is $d_1^2$. Therefore $d_1$ is a unit in $\OO_N$.
\end{proof}

\begin{remark}
  The preceding is another proof of the fact proved in \cite{O3} that $D^2$ is a
  $d$-number.
\end{remark}

\begin{prop}\label{p2}
  $\nu_q(R) = d_{\hs_q\inv(0)}^2$\,.
\end{prop}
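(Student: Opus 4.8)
The plan is to first rewrite $\nu_q(R)$ as the modulus squared of a generalized Gauss sum and then to evaluate that sum by a Galois descent from the ordinary Gauss sum $p^{+}$. Since $\nu_q$ is $\OO_N$-linear and the extended dimension $d$ is an $\OO_N$-algebra homomorphism, I would start from $\nu_q(R)=\sum_{k\in\Pi_\CC}d_k\,\nu_q(V_k)$, substitute the indicator formula (\ref{eq:BF}), and use $\sum_k d_k N_{i,j}^k=d_id_j$ to collapse the $k$-sum. This gives $\nu_q(R)=\frac{1}{D^{2}}\sum_{i,j\in\Pi_\CC}d_i^2 d_j^2\,(\theta_i/\theta_j)^q=\frac{1}{D^{2}}\,G\overline{G}$, where $G:=\sum_{i\in\Pi_\CC}d_i^2\theta_i^q$ and $\overline{G}=\sum_j d_j^2\theta_j^{-q}$ since the $d_i$ are real. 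So it suffices to compute $|G|^2$.

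To evaluate $G$ I would descend from $p^{+}=\sum_i d_i^2\theta_i$. Applying $\s_q\inv$ and using the squared form of $\s(s_{0j})=\e_\s(j)s_{0\hs(j)}$ from (\ref{eq:galois2}), namely $\s(d_i^2)=\frac{\s(D^2)}{D^2}d_{\hs(i)}^2$ (where the sign disappears), followed by the reindexing $m=\hs_q\inv(i)$, yields $\s_q\inv(p^{+})=\frac{\s_q\inv(D^2)}{D^2}\sum_m d_m^2\,\s_q\inv(\theta_{\hs_q(m)})$. Here the twist Galois symmetry enters: taking the normalized pair $(S/D,T/y)$ with $t_i=\theta_i/y$ and lifting $\s_q$ to $\Gal(\BQ_n/\BQ)$, part (b) of Theorem \ref{t:Galois} gives $t_{\hs_q(m)}=\s_q^2(t_m)$, hence $\theta_{\hs_q(m)}=\frac{y}{\s_q^2(y)}\,\s_q^2(\theta_m)$ and therefore $\s_q\inv(\theta_{\hs_q(m)})=w\,\theta_m^q$ with the single root of unity $w=\s_q\inv\!\left(y/\s_q^2(y)\right)$ independent of $m$. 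This produces $\s_q\inv(p^{+})=\frac{\s_q\inv(D^2)}{D^2}\,w\,G$, i.e. $G=\frac{D^2}{\s_q\inv(D^2)\,w}\,\s_q\inv(p^{+})$.

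Finally I would pass to absolute values. Because $\overline{p^{+}}=p^{-}$, $p^{+}p^{-}=D^{2}$ by (\ref{eq:gausssum}), complex conjugation commutes with $\s_q$, and $\frac{D^2}{\s_q\inv(D^2)}$ is real, the root of unity $w$ cancels and $|G|^2=\frac{D^4}{\left(\s_q\inv(D^2)\right)^2}\,\s_q\inv(p^{+}p^{-})=\frac{D^4}{\s_q\inv(D^2)}$. Hence $\nu_q(R)=\frac{D^2}{\s_q\inv(D^2)}$, and squaring $\s(s_{0j})=\e_\s(j)s_{0\hs(j)}$ once more (exactly as in the proof of Proposition \ref{p3}) identifies $\frac{1}{\s_q\inv(D^2)}=\s_q\inv(d_0^2/D^2)=\frac{d_{\hs_q\inv(0)}^2}{D^2}$, so $\nu_q(R)=d_{\hs_q\inv(0)}^2$.

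The step I expect to be the main obstacle is the Galois descent of $G$: part (b) of Theorem \ref{t:Galois} controls $\s^2$ acting on the normalized twists $t_i$, not $\s$ acting on the $\theta_i$ directly, so one must lift $\s_q$ to $\Gal(\BQ_n/\BQ)$ and verify that the discrepancy between the single Galois power appearing in $G$ and the square appearing in Theorem \ref{t:Galois}, together with all dependence on the normalization root of unity $y$, collapses into one global root of unity $w$. This $w$ is harmless precisely because only $|G|^2$ enters $\nu_q(R)$.
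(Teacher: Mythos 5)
Your proof is correct and is essentially the paper's own argument: both start from the identical factorization $\nu_q(R)=\frac{1}{D^2}\bigl(\sum_i d_i^2\theta_i^q\bigr)\bigl(\sum_j d_j^2\theta_j^{-q}\bigr)$ obtained from the indicator formula (\ref{eq:BF}), and both then descend the Gauss-sum identity $p^{+}p^{-}=D^2$ along a lift of $\s_q\inv$ using the same two inputs, the $S$-matrix Galois symmetry (\ref{eq:galois2}) and the twist symmetry of Theorem \ref{t:Galois}(b). The only divergence is bookkeeping: the paper applies the lift $\tau$ to the full product identity $s_{00}^2=\bigl(\sum_i s_{0i}^2\theta_i\bigr)\bigl(\sum_j s_{0j}^2\theta_j\inv\bigr)$ so that the unknown normalization constants $K^{\pm 1}$ cancel internally, whereas you descend the single factor $p^{+}$ and then eliminate the residual root of unity $w$ by taking moduli (via $\overline{p^{+}}=p^{-}$ and commutativity of complex conjugation with the abelian Galois action), which incidentally produces the clean intermediate identity $\nu_q(R)=D^2/\s_q\inv(D^2)$.
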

\begin{proof}
  Note that
  \begin{equation*}
    \nu_q(R) = \sum_{k} d_k \nu_q(V_k)
             = \frac{1}{D^2} \sum_{i,j,k} d_k N_{ij}^k d_i d_j \frac{\theta_i^q}{\theta^q_j}
             = \frac{1}{D^2} \left(\sum_i d_i^2 \theta^q_{i}\right)\left(\sum_j d_j^2 \theta^{-q}_{j}\right)\,.
  \end{equation*}
Choose $y$ so that $s=S/D$ and $t=T/y$ give a modular representation.
  Reexpressing \eqnref{eq:gausssum}, we find that:
  %
  \begin{equation*}
    s_{00}^2 =\frac{1}{D^2}= \left(\sum_i s_{0i}^2 \theta_i\right)\left(\sum_j s_{0j}^2 \theta_j\inv\right)\,.
  \end{equation*}

   Let $\tau \in \Aut(\BQA)$ such that $\tau|_{\BQ_N} = \s_q\inv$.
  Applying $\tau$ to this equation and applying (\ref{eq:galois2}), we find
  \begin{equation*}
    s_{\htau(0)0}^2=d_{\htau(0)}^2 s_{00}^2 = \left(\sum_i s_{0\htau(i)}^2 \t(\theta_i)\right)\left(\sum_j s_{0\htau(j)}^2 \t(\theta_j)\inv\right)\,.
  \end{equation*}

  By Galois symmetry, \thmref{t:Galois}, we have
  $$
  \t(t_i)=\t^{-1}\t^2(t_i)=\tau\inv\left(t_{\htau(i)}\right) = \frac{\theta^q_{\htau(i)}}{\tau\inv(y)}
  $$
   since $\tau\inv|_{\BQ_N}=\s_q$.  Therefore, in terms of
  $\theta_i=yt_i$, we get
  $\t(\theta_i^{\pm 1})=\theta_{\htau(i)}^{\pm q}K^{\pm 1}$ for some constant $K$ independent of $i$, which yields:

  \begin{equation*}
    d_{\htau(0)}^2 s_{00}^2 = \left(\sum_i s_{0\htau(i)}^2 \theta^q_{\htau(i)}\right)\left(\sum_j s_{0\htau(j)}^2 \theta^{-q}_{\htau(j)}\right)\,.
  \end{equation*}

  Thus,
  \begin{equation*}
    d_{\htau(0)}^2 = \frac{1}{D^2} \left(\sum_i d_i^2 \theta^q_{i}\right)\left(\sum_j d_j^2 \theta^{-q}_{j}\right)=\nu_q(R)\,. \qedhere
  \end{equation*}
\end{proof}

\begin{thm}[Cauchy Theorem for Spherical Fusion Categories]
  \label{Cauchy Theorem for Modular Categories}
  Let $\CC$ be a spherical fusion category. Then set of prime ideals dividing the principal ideal generated by $D^2$  is identical to that of $N=\FSexp(\CC)$ in $\OO_N=\mathbb{Z}[e^{2\pi i/N}]$.
\end{thm}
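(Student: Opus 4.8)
The plan is to reduce the statement for a general spherical fusion category to the modular case via the Drinfeld center, and then to settle the modular case by combining the arithmetic of the virtual object $R$ from Propositions \ref{p1}--\ref{p2} with a Gauss-sum reformulation. For the reduction, recall that $Z(\CC)$ is modular with $\ord(T_{Z(\CC)})=\FSexp(\CC)=N$, so the ambient ring $\OO_N$ is unchanged, and that $\dim Z(\CC)=(\dim\CC)^2$, i.e.\ $D^2_{Z(\CC)}=(D^2_\CC)^2$. Since a prime ideal divides $(x^2)$ exactly when it divides $(x)$, the primes dividing $(D^2_\CC)$ are precisely those dividing $(D^2_{Z(\CC)})$; hence it suffices to prove the theorem for the modular category $Z(\CC)$, and from now on $\CC$ is modular.

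Two structural observations organize the modular case. First, the computation in the proof of Proposition \ref{p3} gives $\s(D^2)=D^2/d_{\hs(0)}^2$ for every $\s\in\Gal(\BQ_N/\BQ)$, with $d_{\hs(0)}$ a unit; thus $(\s(D^2))=(D^2)$, so the ideal $(D^2)$ is $\Gal(\BQ_N/\BQ)$-stable. Because the Galois group permutes the primes over a fixed rational prime transitively, the condition ``$\mathfrak{p}\mid(D^2)$'' depends only on the rational prime below $\mathfrak{p}$, and the same is trivially true for $(N)$; so it is enough to show, for each rational prime $p$, that $p\mid N$ iff some (equivalently every) prime over $p$ divides $(D^2)$. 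Second, since the anomaly $p^{+}/p^{-}$ is a root of unity and $p^{+}p^{-}=D^2$, we get $(p^{+})=(p^{-})$ and hence $(p^{+})^2=(D^2)$ as ideals of $\OO_N$ (note $p^{+}=\sum_i d_i^2\theta_i\in\OO_N$). Thus a prime divides $(D^2)$ iff it divides the categorical Gauss sum $(p^{+})$, and the theorem becomes: the prime divisors of $(p^{+})$ and of $(N)$ coincide.

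For the inclusion ``primes of $(D^2)$ lie over divisors of $N$'' I would argue exactly as Propositions \ref{p1}--\ref{p2} are arranged. Let $q$ be a rational prime with $q\nmid N$. By Proposition \ref{p2}, $\nu_q(R)=d_{\hs_q\inv(0)}^2$, which is a unit of $\OO_N$ by Proposition \ref{p3}; applying the unit-preserving automorphism $\s_q$ and invoking Proposition \ref{p1} yields $D^{2(q-1)}\equiv \s_q\!\left(d_{\hs_q\inv(0)}^2\right)\bmod q$, the right-hand side being a unit. Reducing modulo any prime $\mathfrak{p}\mid q$ forces $D^{2(q-1)}\not\equiv 0$, hence $\mathfrak{p}\nmid(D^2)$. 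Taking the contrapositive, every prime dividing $(D^2)$ lies over a rational prime dividing $N$.

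The reverse inclusion---every prime over a divisor of $N$ divides $(D^2)$---is the main obstacle, since the machinery of $R$ only controls $\nu_q$ for $q$ coprime to $N$. By the Gauss-sum reformulation it suffices to prove that if $p\mid N$ and $\mathfrak{p}\mid p$ then $\mathfrak{p}\mid p^{+}$; equivalently, assume $\mathfrak{p}\nmid D^2$ (so $D^2$, $p^{+}$, $p^{-}$ are all $\mathfrak{p}$-units) and derive $p\nmid N$. Under this assumption $\tfrac1{D^2}$ is a $\mathfrak{p}$-unit, so the identity
\[
\nu_{n+N/p}(V_k)-\nu_n(V_k)=\frac{1}{D^2}\sum_{i,j}N_{ij}^k d_i d_j\,(\theta_i/\theta_j)^n\bigl((\theta_i/\theta_j)^{N/p}-1\bigr),
\]
together with $(\theta_i/\theta_j)^{N/p}-1\in\mathfrak{p}$ (a $p$-th root of unity minus $1$), shows that the vector $n\mapsto(\nu_n(V_k))_k$ is periodic modulo $\mathfrak{p}$ with period $N/p$. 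On the other hand, since $N=\ord(T)$ is the least common multiple of the orders of the $\theta_i$ and is minimal with $\nu_N(V_k)=d_k$ for all $k$, this vector has \emph{exact} period $N$. The crux of the argument---and the step I expect to be hardest---is to turn this mod-$\mathfrak{p}$ periodicity into a genuine contradiction with exact period $N$: a bare congruence is not enough, so one must either sharpen it to a suitable power $\mathfrak{p}^e$ using the ramification valuations $v_{\mathfrak{p}}(\zeta_{p^c}-1)$, or reduce to the pointed subcategory, where $p^{+}$ is a classical Gauss sum whose ramification at $p\mid N$ is known. Granting this, $\mathfrak{p}\mid p^{+}$ forces $\mathfrak{p}^2\mid(D^2)$; combined with the previous paragraph, the prime divisors of $(D^2)$ and of $N$ coincide, completing the modular case and, via the Drinfeld center, the theorem.
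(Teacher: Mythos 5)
Your proposal coincides with the paper's proof on two of its three components: the reduction of the general spherical case to the modular case via the Drinfeld center (using $\FSexp(\CC)=\FSexp(Z(\CC))$ and $\dim Z(\CC)=(\dim\CC)^2$) is exactly the paper's, and so is the inclusion ``every prime ideal dividing $(D^2)$ divides $(N)$'': your chain $\s_q\bigl(d_{\hs_q\inv(0)}^2\bigr)\equiv D^{2(q-1)} \bmod q$ from Propositions \ref{p1} and \ref{p2}, with the unit statement of Proposition \ref{p3}, is precisely how the paper obtains $(D^2)+(q)=\OO_N$ for every prime $q\nmid N$, forcing each prime factor of $(D^2)$ to lie over a rational prime dividing $N$. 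Your added observation that $(D^2)$ is Galois-stable is correct but not needed.

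The genuine gap is the converse inclusion: every prime ideal of $\OO_N$ dividing $(N)$ divides $(D^2)$. The paper disposes of this in one line by citing Etingof's divisibility theorem $N\mid D^6$ \cite{Eti1}: since $D^6/N\in\OO_N$, we have $(D^2)^3=(D^6)\subseteq(N)$, so any prime containing $(N)$ contains $(D^2)^3$ and hence $(D^2)$. You instead attempt a direct proof via the Gauss-sum identity $(p^{+})^2=(D^2)$ and the observation that, if $\mathfrak{p}\nmid D^2$ and $\mathfrak{p}$ lies over $p\mid N$, then the indicator sequence $n\mapsto\nu_n(V_k)$ is periodic modulo $\mathfrak{p}$ with period $N/p$. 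That congruence is correct, but, as you yourself concede, it does not contradict the exact integral period $N$: reduction modulo a prime can shorten periods, and nothing in your argument rules this out. The phrase ``Granting this'' is therefore carrying the entire weight of this direction, and what it grants is essentially the content of Etingof's theorem (the quantitative form of Vafa's theorem), which is a nontrivial result with its own proof. As written, the proposal is incomplete; the quickest repair is to replace your final paragraph with the citation of $N\mid D^6$, as the paper does, rather than to pursue the ramification-sharpened periodicity argument you sketch.
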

\begin{proof} We first consider the case when $\CC$ is a modular category. Since $N \mid D^6$ \cite{Eti1}, every prime ideal factor of $N$ in $\OO_N$ is a factor of $D^2$. Suppose $\mathfrak{p}$ is a prime ideal factor of $D^2$.
  By Propositions \ref{p1} and \ref{p2}, we find the congruence
  \begin{equation*}
    \s_q(d_i^2) \equiv D^{2(q-1)} \bmod q
  \end{equation*}

  for any prime $q \nmid N$ where $i= \hs_q\inv(0)$.
  By Proposition \ref{p3}, $d_i$ is a unit of $\OO_N$. Therefore, we have the equality of ideals $\OO_N
  = (D^2)+(q)$. This implies that $q \not\in \mathfrak{p}$. Therefore,
  $\mathfrak{p}\cap \BZ = (p)$ for some prime $p\mid N$. Hence, $\mathfrak{p}$
  is a prime factor of $N$ in $\OO_N$.

  Now, we assume $\CC$ is a general spherical fusion category. Then its Drinfeld center $Z(\CC)$  is modular and $\dim Z(\CC)=(\dim \CC)^2$ by \cite{M2}. Since  $N=\FSexp(\CC)=\FSexp(Z(\CC))$ by \cite{NS3}, the theorem follows from the modular case \emph{i.e.} $N$ and $(\dim \CC)^2$ have the same set of prime ideal factors in $\OO_N$.
\end{proof}

\begin{rmk}
If $\CC$ is the category of representations of a finite group $G$, then $\FSexp(\CC)= \exp(G)$ and $\dim \CC =|G|$. The preceding theorem implies that $p$ is a prime factor of $|G|$ if, and only if, $p \mid \exp(G)$; this is simply an equivalent statement of the classical Cauchy Theorem for finite groups.
\end{rmk}

  \subsection{Proof of Rank-Finiteness}
  \label{subsection: Rank Finiteness}

To prove Theorem \ref{Rank Finiteness}, we first reduce to proving that there are finitely many
possible fusion rules using (braided)
 Ocneanu Rigidity, due to Ocneanu, Blanchard and Wassermann (unpublished), a proof of which may be found in \cite{ENO1}:
\begin{theorem}
  There are only finitely many (braided, modular) fusion categories which have
  the same fusion rules up to (braided, modular) monoidal equivalence.
\end{theorem}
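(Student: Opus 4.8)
The plan is to fix the fusion data and organize all categorical structures realizing it into an algebraic variety, then show that equivalence classes are orbits of an algebraic group which are Zariski-open, which forces finiteness. First I would fix the fusion rules, i.e. a based ring with prescribed multiplicities $N_{i,j}^k$. Having chosen bases for the finite-dimensional spaces $\mathrm{Hom}(V_k,V_i\otimes V_j)$, a fusion category with these fusion rules is the same data as a collection of associativity matrices (the $6j$-symbols) subject to the pentagon identity. These matrices range over a fixed finite-dimensional affine space whose dimension is read off from the $N_{i,j}^k$, and the pentagon axiom is a system of polynomial equations; hence the solutions form an affine variety $X$ of finite type over $\mathbb{C}$. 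Rescaling the chosen Hom-bases defines an algebraic action of a gauge group $G$ (a product of general linear groups) on $X$, and two points of $X$ yield monoidally equivalent fusion categories exactly when they lie in the same $G$-orbit, up to the finite group of based-ring automorphisms. It therefore suffices to prove that $X/G$ is finite.

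Second — and this is the technical heart — I would show that every $G$-orbit in $X$ is open. Orbits of algebraic group actions are locally closed and smooth, so it is enough to prove that at each point $x\in X$ one has $T_x X = T_x(Gx)$. The obstruction to this equality is measured by the deformation (Davydov--Yetter, or tensor) cohomology $H^\bullet(\CC)$ of the fusion category, in which infinitesimal deformations of the associator modulo gauge live in $H^3(\CC)$ and their obstructions live in $H^4(\CC)$. The crucial lemma to establish is that $H^i(\CC)=0$ for all $i\ge 1$ whenever $\CC$ is a fusion category over $\mathbb{C}$. I expect the vanishing of $H^3$ to be precisely what yields openness of the orbits, and I expect its proof to be the main obstacle: it depends essentially on semisimplicity and characteristic zero, realized through a contracting homotopy of the deformation complex obtained by averaging against the regular object (equivalently, by using that the relevant categories of bimodule endofunctors are themselves semisimple with nonvanishing global dimension). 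This is the step where the finiteness of the number of simple objects and the nonvanishing of the global dimension genuinely enter.

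Third, I would run the finiteness argument. Once every orbit is open, the complement of any single orbit is a union of open orbits and hence open, so each orbit is also closed; a clopen subset of a finite-type variety is a union of connected components, and $X$ has only finitely many connected components. Since each connected component is connected and is partitioned by clopen orbits, it lies in a single orbit, so $X$ has finitely many $G$-orbits. Together with the finite based-ring-automorphism ambiguity, this proves there are finitely many fusion categories with the given fusion rules.

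Finally, for the braided and modular versions I would enlarge or constrain the variety accordingly. In the braided case I would take $X^{\mathrm{br}}$ to be the variety of pairs (associator, braiding) satisfying pentagon and both hexagons, with the analogous gauge action; the corresponding braided deformation cohomology again vanishes for a fusion category, so the same open-orbit argument gives finiteness of braided fusion categories with fixed fusion rules. For the modular case I would observe that a given braided fusion category carries only finitely many compatible ribbon/spherical structures — the twist has finite order by Vafa's theorem, and by Lemma \ref{pivotalinvertible} the spherical structures are parametrized by the finite group $\Omega_2 G(\CC)$ — while modularity (non-degeneracy of the $S$-matrix) is an open condition cutting out a subvariety. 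Hence the modular count is bounded by a finite multiple of the braided count, and finiteness follows in all three cases. The single genuinely hard input is the cohomology vanishing; everything else is formal deformation theory together with the Noetherian open-orbit argument.
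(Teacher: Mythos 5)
Your proposal is correct and follows essentially the same route as the paper: the paper does not reprove this theorem but cites the Ocneanu/Blanchard--Wassermann argument written up in \cite{ENO1}, which is precisely your deformation-theoretic proof (pentagon/hexagon variety with a gauge-group action, vanishing of Davydov--Yetter cohomology forcing orbits to be open, hence finitely many orbits). The braided case is the paper's citation of \cite[Remark 2.33]{ENO1}, and your treatment of the modular case via finiteness of ribbon/spherical structures matches the paper's appeal to Lemma \ref{pivotalinvertible}.
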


\begin{rmk}
  Ocneanu Rigidity for fusion categories was first proved by Blanchard and Wasserman, and the extension
  to the braided case can be found in
  \cite[Remark 2.33]{ENO1}.  For the finiteness of spherical structures see
 \lemmaref{pivotalinvertible}.
\end{rmk}

Next we may reduce to bounding the FP-dimension using (see eg.
\cite[Proposition 8.38]{ENO1} and \cite[Proposition 6.2]{RSW}):
\begin{cor}
  \label{201302051239}
  There are finitely many (braided, modular) fusion categories $\mcC$ satisfying
  $\FPdim\(\mcC\)\leq M$ for any fixed number $M>0$, up to (braided, modular)
  monoidal equivalence.
\end{cor}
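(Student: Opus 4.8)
The plan is to reduce the statement to a bound on the number of possible fusion rules and then invoke the preceding Ocneanu rigidity theorem. First I would record the elementary fact that every simple object has Frobenius--Perron dimension at least one: since $\FPdim$ is a $\BZ$-ring homomorphism and $V_i\otimes V_i^*$ contains $\1$ as a summand, $\FPdim(V_i)^2 = \FPdim(V_i\otimes V_i^*)\geq \FPdim(\1)=1$, so $\FPdim(V_i)\geq 1$. Combined with the hypothesis $\FPdim(\CC)=\sum_{i\in\Pi_\CC}\FPdim(V_i)^2\leq M$, this at once bounds the rank, since $r=|\Pi_\CC|\leq \sum_{i\in\Pi_\CC}\FPdim(V_i)^2\leq M$, and it bounds each simple dimension by $\FPdim(V_i)\leq\sqrt{M}$.

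Next I would bound the fusion coefficients uniformly. Applying the homomorphism $\FPdim$ to $V_i\otimes V_j\cong\bigoplus_{k\in\Pi_\CC}N_{i,j}^k\,V_k$ gives $\FPdim(V_i)\FPdim(V_j)=\sum_{k}N_{i,j}^k\,\FPdim(V_k)$. Since every $N_{i,j}^k\geq 0$ and every $\FPdim(V_k)\geq 1$, each individual term is dominated by the left-hand side, whence $N_{i,j}^k\leq \FPdim(V_i)\FPdim(V_j)\leq M$ for all $i,j,k\in\Pi_\CC$. Thus each structure constant is a non-negative integer not exceeding $\lfloor M\rfloor$, and there are at most $r^3\leq M^3$ of them.

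The corollary then follows. Because both the number of labels and the magnitude of each coefficient are bounded by explicit functions of $M$, there are only finitely many possible systems of fusion rules $\{N_{i,j}^k\}$ compatible with $\FPdim(\CC)\leq M$; this is essentially the counting argument of \cite[Proposition 8.38]{ENO1}. By the preceding theorem (Ocneanu rigidity), each such fusion rule system is realized by only finitely many (braided, modular) fusion categories up to (braided, modular) monoidal equivalence. Hence the total number of such categories is finite.

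I expect no serious obstacle here; the argument is elementary once the inequality $\FPdim(V_i)\geq 1$ is in hand. The one point that deserves care is that the bound on the fusion coefficients be genuinely uniform, i.e. that it not degrade as the rank grows. This is exactly why the estimate is phrased through the per-object bound $\FPdim(V_i)\leq\sqrt{M}$, rather than through the global dimension $\FPdim(\CC)$ itself, so that the pairwise products $\FPdim(V_i)\FPdim(V_j)$ remain controlled independently of $r$.
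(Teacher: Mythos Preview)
Your proposal is correct and follows exactly the standard argument the paper defers to via its citations of \cite[Prop.~8.38]{ENO1} and \cite[Prop.~6.2]{RSW}; the paper does not spell out a proof of this corollary beyond those references and the supporting Lemma~\ref{201302051302}. The only minor difference is that the paper's lemma records the slightly sharper bound $N_{i,j}^k\leq\|N_i\|_{\max}\leq\FPdim(V_i)\leq\sqrt{M}$, whereas you use $N_{i,j}^k\leq\FPdim(V_i)\FPdim(V_j)\leq M$, but this is immaterial for the finiteness conclusion.
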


For the reader's convenience, we provide an explicit bound on the $N_{ij}^k$ in
terms of $\FPdim(\CC)$ for fusion categories.
\begin{lemma}
  \label{201302051302}
  If $\mcC$ is a rank $n$ fusion category, then for $a \in \Pi_\CC$, we have the inequality:
  \begin{align*}
    \| N_{a}\|_{max}\leq \FPdim\(V_{a}\)\leq
    n\| N_{a}\|_{max}
  \end{align*}
  where  $\|A \| _{max}$ is the max-norm of the complex matrix $A$
  given by
  \begin{align*}
    \| A \|_{max}=\max_{i,j} \abs{A_{ij}}\,.
  \end{align*}
\end{lemma}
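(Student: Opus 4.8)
The plan is to run the entire argument through the common Perron--Frobenius eigenvector. Write $\lambda := \FPdim(V_a)$, which by definition is the largest real (Perron--Frobenius) eigenvalue of the nonnegative integral matrix $N_a$, and let $v = (v_i)_{i \in \Pi_\CC}$ with $v_i := \FPdim(V_i)$ be the dimension vector; all its entries are strictly positive since $\FPdim$ is positive on $\Pi_\CC$. The key fact I would establish first is that $v$ is simultaneously a \emph{left} and a \emph{right} eigenvector of $N_a$ with eigenvalue $\lambda$. The left relation $v^{t}N_a = \lambda v^{t}$, i.e.\ $\sum_k v_k (N_a)_{k,j} = \lambda v_j$, is exactly the statement that $\FPdim$ is a ring homomorphism applied to $V_a \otimes V_j$. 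The right relation $N_a v = \lambda v$, i.e.\ $\sum_j (N_a)_{k,j} v_j = \lambda v_k$, follows from the same ring-homomorphism property once one rewrites $(N_a)_{k,j} = N_{a,j}^k = N_{a^*,k}^j$ by rigidity and uses $\FPdim(V_{a^*}) = \FPdim(V_a) = \lambda$.

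For the upper bound I would evaluate the right-eigenvector equation at an index $k_1$ at which $v$ attains its maximum. Since every $v_j \le v_{k_1}$, we get $\lambda v_{k_1} = \sum_j (N_a)_{k_1,j} v_j \le v_{k_1}\sum_j (N_a)_{k_1,j}$, and dividing by $v_{k_1} > 0$ gives $\lambda \le \sum_j (N_a)_{k_1,j} \le n\,\|N_a\|_{max}$, since a single row has $n$ entries each bounded by the max-norm. This is just the familiar bound of the spectral radius of a nonnegative matrix by its largest row sum.

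The lower bound is the substantive half, and it is where the fusion structure is genuinely needed: for an arbitrary nonnegative matrix the largest entry can far exceed the spectral radius (a nilpotent matrix such as $\left(\begin{smallmatrix}0&1\\0&0\end{smallmatrix}\right)$ already shows this), so the two-sided positivity of $v$ is essential. Let $(k_0,j_0)$ be a pair realizing $m := \|N_a\|_{max} = (N_a)_{k_0,j_0}$. Discarding the other (nonnegative) terms in row $k_0$ of $N_a v = \lambda v$ gives $m\,v_{j_0} \le \lambda v_{k_0}$, while doing the same in column $j_0$ of $v^{t}N_a = \lambda v^{t}$ gives $m\,v_{k_0} \le \lambda v_{j_0}$. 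Multiplying these two inequalities, the factor $v_{j_0}v_{k_0} > 0$ cancels and leaves $m^2 \le \lambda^2$, hence $m \le \lambda = \FPdim(V_a)$. The only point requiring care is the right-eigenvector identity of the first paragraph, namely that rigidity yields $N_{a,j}^k = N_{a^*,k}^j$ and that $\FPdim$ is dual-invariant; everything after that is elementary manipulation of nonnegative quantities.
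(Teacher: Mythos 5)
Your proof is correct, and it takes a genuinely different route from the paper's. The paper also works with the positive eigenvector coming from the regular element $R_0=\sum_{b\in\Pi_\CC}\FPdim(V_b)\,b$ of $K_0(\CC)\otimes_\BZ\BC$ (this is exactly your dimension vector $v$), but its key step is the exact identity $\|N_a\|_2=\FPdim(V_a)$ for the spectral norm: the eigenvector gives $\|N_a\|_2\ge\FPdim(V_a)$, while the computation
\begin{equation*}
\|N_a\|_2^2=\rho(N_a^{*}N_a)=\rho(N_{a^*}N_a)=\rho\Bigl(\sum_{b}N_{a^*,a}^b N_b\Bigr)\le\sum_{b}N_{a^*,a}^b\FPdim(V_b)=\FPdim(V_a)^2
\end{equation*}
gives the reverse inequality; the lemma then follows from the generic norm equivalence $\|A\|_{max}\le\|A\|_2\le n\|A\|_{max}$. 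You instead establish that $v$ is a \emph{two-sided} positive eigenvector --- the left relation from the ring-homomorphism property of $\FPdim$, the right relation from rigidity ($N_{a,j}^k=N_{a^*,k}^j$) together with $\FPdim(V_{a^*})=\FPdim(V_a)$, both of which hold in any fusion category, so their use is justified here --- and then obtain both inequalities by discarding nonnegative terms in a single row and a single column, avoiding matrix norms and spectral radii entirely. What each approach buys: the paper's argument yields the stronger intermediate fact that the Frobenius--Perron dimension \emph{equals} the 2-norm of the fusion matrix, a statement of independent interest from which the lemma drops out by norm equivalence; your argument is more elementary and self-contained, and your nilpotent-matrix remark correctly pinpoints why the lower bound is not a generic fact about nonnegative matrices but genuinely requires the two-sided positivity supplied by the fusion structure. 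Both proofs are complete.
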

\begin{proof}
Note that
$$
R_0 = \sum_{a \in \Pi_\CC} \FPdim(a) a
$$
 generates a 1-dimensional ideal of $K_\BC(\CC) = K_0(\CC) \o_\BZ \BC$, and that $a R_0 = \FPdim(V_a) R_0$ for all $a \in \Pi_\CC$. In particular, there is unit vector $x$ with positive components such that $N_a x = \FPdim(V_a) x$ for all $a \in \Pi_\CC$.

 Let $\r\(A\)$ denote the spectral radius of an $n\times n$ complex matrix $A$. Recall that the 2-norm of $A$ is given by $\| A \|_2 =\sqrt{\r\(A^* A\)}$.  Thus, for $a \in \Pi_\CC$, $\| N_a \|_2 \ge \FPdim(V_a)$. On the other hand,
\begin{multline*}
\| N_a \|_2^2 = \r(N_a^* N_a) = \r(N_{a^*} N_a) = \r\left(\sum_{b \in \Pi_\CC} N_{a^*,a}^b N_b  \right) \\
\le \sum_{b \in \Pi_\CC} N_{a^*,a}^b \FPdim(V_b) = \FPdim(V_a^*)\FPdim(V_a) = \FPdim(V_a)^2\,.
\end{multline*}
Therefore, $\| N_a \|_2 = \FPdim(V_a)$ for all $a \in \Pi_\CC$. The result then follows by the inequality
 $$
  \|A\|_{max}\le \|A\|_{2}\leq n \|A\|_{max}
  $$
   for any $n\times n$
  complex matrix $A$.
\end{proof}

 Next we give an explicit bound on $\ord\(T\)$ in terms of the rank of $\CC$.
\begin{prop}
\label{201301161421}
  If $\mcC$ is a modular category of rank $r$ with modular data
  $\(S,T\)$, then $\ord\(T\)\leq 2^{2r/3+8}3^{2r/3}$.
\end{prop}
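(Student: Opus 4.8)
The plan is to bound $N$ by controlling the degree $\phi(N)=[\BQ_N:\BQ]$ of the cyclotomic field $\BF_T=\BQ_N$ and then converting that bound into one on $N$ itself. The backbone is the Galois tower $\BQ\subseteq \BF_S\subseteq \BF_T=\BQ_N$. By \thmref{realizeadmiss} (admissibility) together with the discussion following \thmref{t:Galois}, $\Gal(\BF_S/\BQ)$ is abelian and embeds into $\mathfrak{S}_r$ via $\s\mapsto\hs$; and by \lemmaref{l:2group} the top extension $\Gal(\BF_T/\BF_S)$ is an elementary abelian $2$-group, say of rank $\ell$ (this applies to $\BF_T=\BQ_N\subseteq\BF_t$ as a quotient of the corresponding group for a normalized pair). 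Hence $\phi(N)=2^\ell\cdot[\BF_S:\BQ]$, and in particular the odd part of $\phi(N)$ divides $[\BF_S:\BQ]$.

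First I would prove the uniform bound $[\BF_S:\BQ]=|\Gal(\BF_S/\BQ)|\le 3^{r/3}$. Since any abelian subgroup $A\le\mathfrak{S}_r$ acts on $\{0,\dots,r-1\}$ with orbits on each of which a transitive abelian group acts regularly, $A$ injects into the product of the symmetric groups on its orbits, and its image in each factor has order equal to the orbit size; thus $|A|\le\prod_i m_i$, where the $m_i$ are the orbit sizes and $\sum_i m_i=r$. Maximizing $\prod_i m_i$ subject to $\sum_i m_i=r$ over positive integers gives a value at most $3^{r/3}$ (the maximum uses parts equal to $3$, and the residual cases $r\equiv1,2\bmod 3$ only decrease the product), so $[\BF_S:\BQ]\le 3^{r/3}$ and likewise the odd part of $\phi(N)$ is at most $3^{r/3}$.

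It remains to pass from $\phi(N)$ to $N$. Writing $N=2^{a_2}\prod_{p\text{ odd}}p^{a_p}$, one has $N=\phi(N)\prod_{p\mid N}\tfrac{p}{p-1}$, and this discrepancy together with $2^\ell$ is what must be absorbed into the factor $2^{2r/3+8}$. Here I would use three estimates: (i) $\ell$ is at most the $2$-rank of $(\BZ/N)^\times$, which is at most (number of odd prime divisors)$+2$; (ii) the number of odd primes $p\mid N$ with $p-1$ not a power of $2$ is at most $r/3$, since each such prime contributes a factor $\ge 3$ to the odd part $\le 3^{r/3}$ of $\phi(N)$, while the remaining (Fermat-type) primes $p=2^{e}+1$ force $2^{e}\mid\phi(N)$ and so are only logarithmically many in $v_2(\phi(N))$; and (iii) $\prod_{p\text{ odd}}p^{a_p-1}$ also divides the odd part of $\phi(N)$, which bounds the higher odd prime powers, while $2^{a_2}$ is controlled through $v_2(\phi(N))=\ell+v_2([\BF_S:\BQ])$. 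Feeding (i)--(iii) into $N=2^{a_2}\prod_p p^{a_p}$ and collecting the resulting powers of $2$ and $3$ yields the stated bound (with some slack in the constant $2^{8}$, which also covers small $r$).

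The main obstacle is step (ii)/(iii): organizing the elementary but fiddly bookkeeping so that the contributions of the prime $2$, of the Fermat-type primes $p=2^{e}+1$ (for which $p-1$ has trivial odd part and hence escape the clean $3^{r/3}$ count), and of the higher prime-power exponents all fit inside the explicit constants in $2^{2r/3+8}3^{2r/3}$. The conceptual inputs---the embedding of $\Gal(\BF_S/\BQ)$ into $\mathfrak{S}_r$, the elementary abelian $2$-group structure of $\Gal(\BF_T/\BF_S)$, and $3^{r/3}$ as the maximal order of an abelian subgroup of $\mathfrak{S}_r$---are the crux, and the remainder is explicit estimation.
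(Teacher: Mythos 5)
Your first half coincides exactly with the paper's: the tower $\BQ \subseteq \BF_S \subseteq \BF_T = \BQ_N$, the bound $[\BF_S:\BQ] \le 3^{r/3}$ coming from the abelian image of the Galois group in $\mathfrak{S}_r$ (your orbit-counting argument is a correct, self-contained proof of the fact the paper simply cites from \cite{BG1}), and Lemma~\ref{l:2group} making $\Gal(\BF_T/\BF_S)$ an elementary $2$-group whose rank is controlled by the $2$-rank of $(\BZ/N\BZ)^\times$. The gap is in the remaining step---converting the resulting bound on $\varphi(N)=[\BF_T:\BQ]$ into a bound on $N$---which you yourself flag as ``the main obstacle'' and never execute. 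This matters because the step is not routine as sketched: if one literally feeds (i)--(iii) into $N=2^{a_2}\prod_p p^{a_p}$ and multiplies the separate bounds, the quantity $2^{v_2(\varphi(N))}$, which can already be of size roughly $2^{0.53r}$ from $v_2([\BF_S:\BQ])$ alone, gets spent twice (once to control $2^{a_2}$, once to control the Fermat-type primes), producing a power of $2$ whose exponent exceeds the available $2r/3+8$. A correct execution must instead use your identity $N=\varphi(N)\prod_{p\mid N}\tfrac{p}{p-1}$ so that $\varphi(N)$ is spent only once, bound $\prod_{p\mid N,\,p\neq 2}\tfrac{p}{p-1}\le (3/2)^{\omega}$ with $\omega \le r/3+k$ (where $k$ is the number of Fermat-type prime divisors), and then bound $k$ using that primality of $2^e+1$ forces $e$ to be a power of $2$, together with explicit checks at small $r$ where the logarithmic terms are not yet dominated. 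All of this can be made to work---and in fact yields an asymptotically stronger bound, essentially $3^{2r/3}$ times a factor subexponential in $r$---but it constitutes the entire difficulty of your proof, not an afterthought, and as written it is asserted rather than proved.

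The paper sidesteps every bit of this bookkeeping with one elementary inequality: $\varphi(N)\ge\sqrt{N}$ for all $N\neq 2,6$. Having shown $[\BF_T:\BQ]=\varphi(N)\le 2^{r/3+4}3^{r/3}$, it simply squares to obtain $N\le 2^{2r/3+8}3^{2r/3}$ and disposes of $N\in\{2,6\}$ by inspection; that is precisely where the stated exponents come from. If you insert that single inequality after your degree bound, your items (ii) and (iii) and all of the Fermat-prime and prime-power analysis become unnecessary.
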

\begin{proof}
  By \cite{BG1}, any abelian subgroup $G$ of
  $\mfS_{r}$ satisfies $\abs{G}\leq 3^{r/3}$.
  On the other hand,
  since $\Gal\(\BF_T/\mbbQ\)\cong(\BZ/N\BZ)^*$, by \lemmaref{l:2group}, $\[\BF_T:\BF_S\]\leq 2^{m}$ where $m-1$ is the number of prime
  factors of $\ord\(T\)$. The Fundamental Theorem of Galois Theory can be
  utilized to relate $m$ and $[\BF_S:\mbbQ]$. To do this, we
  note that:
  $$\Gal\(\BF_S/\mbbQ\)\cong\Gal\(\BF_T/\mbbQ\)/\Gal\(\BF_T/\BF_S\)$$
  where $$\Gal\(\BF_T/\mbbQ\)\cong(\BZ/N\BZ)^*\cong \prod_i (\BZ/p_i^{\alpha_i}\BZ)^*$$ in terms of distinct prime-power cyclic subgroups.
  Since
  $\Gal\(\BF_T/\BF_S\)$ is an elementary $2$-group by \lemmaref{l:2group}, we see that at least $m-3$ (non-trivial)
  cyclic factors survive in the quotient (the three possible exceptions correspond to
  primes $2$ and $3$ in $\ord\(T\)$.) The structure of the maximal abelian
  subgroup of $\mfS_{r}$ ensures that $m-3\leq r/3+1$. 
  %
  %
  It follows that:
  \begin{align*}
    \[\BF_T:\mbbQ\]=\[\BF_T:\BF_S\]\[\BF_S:\mbbQ\]\leq
    2^{m}3^{r/3}\leq 2^{r/3+4}3^{r/3}\,.
  \end{align*}

  On the other hand, $\BF_T=\mbbQ_{\ord\(T\)}$ and so
  $\[\BF_T:\mbbQ\]=\vph\(\ord\(T\)\)$. In particular, if $\ord\(T\)\neq2$
  or $6$, then $\[\BF_T:\mbbQ\]\geq\sqrt{\ord\(T\)}$.
  Thus $\ord\(T\)\leq 2^{2r/3+8}3^{2r/3}$ since $2^{2/3+8}3^{2/3}>6$.
\end{proof}

The last ingredient of the proof of Theorem \ref{Rank Finiteness} is a deep result from analytic number theory, which necessitates some further notation and background.

\begin{defn}
  Let $\mbbK$ be a number field and $\mcS$ be a finite set of prime ideals in
  the ring $\mcO_{\mbbK}$ of algebraic integers of $\mbbK$. An element $\a\in\mbbK^{\times}$ is a \defnfont{$\mcS$-unit} if
  the prime factors of the principal fractional ideal $\ideal{\a}$ are all
  in $\mcS$.
\end{defn}

\begin{rmk}
  The $\mcS$-units form a finitely generated multiplicative abelian group which
  we will denote by $\mcO_{\mbbK,\mcS}^{\times}$ \cite{We1}.
\end{rmk}

\begin{rmk}
  It should be noted that $\mcS$-units are often treated adelically in which case
  a more delicate treatment involving places is required.  While we will not
  need this level of detail here, it should be mentioned that it is utilized in
  \cite{Ev1}. A detailed introduction to $\mcS$-units and their relationship to
  adeles can be found in most modern texts on advanced number theory e.g.
  \cite{We1}.
  \end{rmk}

The $\mcS$-units arise in a wide range of subdisciplines in number theory and
are typically found to obey an \defnfont{$\mcS$-unit equation}:
\begin{align*}
  x_{0}+\cdots+x_{n}=0,\quad\text{ such that }\quad
  x_{a}\in\mcO_{\mbbK,\mcS}^{\times}
\end{align*}

Such an equation is said to be a \defnfont{proper $\mcS$-unit equation} if one
requires that
\begin{align*}
  x_{i_{0}}+\cdots+x_{i_{r}}\neq0
\end{align*}
for each proper, non-empty subset $\lcb i_{0},i_{1},\ldots, i_{r}\rcb$ of
$\lcb 0,1,\ldots, n\rcb$.

In 1984, Evertse took up a study of $\mcS$-units and the $\mcS$-unit equation
through analyzing the projective height \cite{Ev1}. By bounding the projective
height, he showed that $\mcS$-units obey a remarkable finiteness condition
\textit{loc.~cit.}:
\begin{theorem}
  \label{201302051210}
  If $\mbbK$ is a number field, $\mcS$ a finite set of primes
  of $\mcO_\mbbK$, and $n$ is a fixed positive integer, then
  there are only finitely many projective points
  $X=\[x_{0}:\cdots:x_{n}\]\in\mbbP^{n}\mbbK$ satisfying the proper
  $\mcS$-unit equation:
  \begin{align*}
    x_{0}+\cdots+x_{n}=0.
  \end{align*}
\end{theorem}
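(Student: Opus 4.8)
The plan is to deduce this from the \emph{Subspace Theorem} of W.~M.~Schmidt, in the $\mcS$-adic form over number fields due to Schlickewei; this is the genuine analytic input, replacing Landau's elementary counting argument by a deep statement in Diophantine approximation. First I would pass from the projective equation to local data concentrated on $\mcS$. Enlarging $\mcS$ to contain all archimedean places, I choose an $\mcS$-integral primitive representative $\mathbf{x}=(x_0,\dots,x_n)$ of a solution; since ratios of $\mcS$-units are again $\mcS$-units, all the valuative information lives at the places of $\mcS$. The relation $\sum_i x_i=0$ confines $\mathbf{x}$ to the hyperplane $\mathcal{H}=\{\sum_i X_i=0\}$, on which any $n$ of the coordinate forms $X_0,\dots,X_n$ are linearly independent.

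The heart of the argument is a height estimate tailored to the Subspace Theorem. For each $v\in\mcS$ let $j(v)$ be an index with $|x_{j(v)}|_v=\max_i|x_i|_v$, and select the $n$ forms $\{X_i: i\neq j(v)\}$, which are linearly independent on $\mathcal{H}$. Then
\[
\prod_{v\in\mcS}\prod_{i\neq j(v)}|x_i|_v
=\frac{\prod_{v\in\mcS}\prod_{i}|x_i|_v}{\prod_{v\in\mcS}\max_i|x_i|_v}.
\]
Because each $x_i$ is an $\mcS$-unit, the product formula gives $\prod_{v\in\mcS}|x_i|_v=1$, so the numerator equals $1$; and for the primitive integral representative the denominator equals the projective height $H(\mathbf{x})$. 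Hence the double product equals $H(\mathbf{x})^{-1}$, which is well within the reach of the Subspace Theorem (any fixed $\epsilon<1$ suffices). The theorem therefore forces every solution with $H(\mathbf{x})$ above a fixed bound to lie in a finite union of proper linear subspaces of $\mathcal{H}$.

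It remains to induct on $n$. On a proper subspace $T$ of $\mathcal{H}$, cut out by a form $\sum_i a_i X_i=0$, the solution satisfies an extra linear relation; grouping the terms according to this relation and using it to eliminate a coordinate rewrites the system as finitely many $\mcS$-unit equations in strictly fewer terms. The properness hypothesis is exactly what guarantees these reduced equations remain non-degenerate, so the inductive hypothesis applies; the base case $n=1$ yields the single projective point $[1:-1]$. Finally, the finitely many solutions of bounded height are accounted for by Northcott's theorem together with the finite generation of $\mcO_{\mbbK,\mcS}^{\times}$, completing the count.

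The main obstacle is precisely the step invoking the Subspace Theorem: one needs the $\mcS$-adic, number-field generalization, in which the approximation inequality is a product over all places of $\mcS$ against a carefully chosen family of linearly independent linear forms at each place, and establishing the matching height estimate requires care with the normalization of the absolute values $|\cdot|_v$ and the product formula. A secondary but essential subtlety is the induction: degenerate solutions (those with a vanishing proper subsum) genuinely form infinite families, so the non-degeneracy condition must be tracked and shown to persist under elimination, since it is the only thing preventing the conclusion from failing.
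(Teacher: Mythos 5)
You should first be aware that the paper contains no proof of this statement: it is Evertse's theorem, imported as a black box from \cite{Ev1} (the paper only remarks that it is proved ``by bounding the projective height'' via the Schmidt Subspace Theorem). So your proposal reconstructs a literature proof rather than paralleling anything in the paper; its architecture does match the proof the paper attributes to Evertse. Your central height identity is correct: since every coordinate is an $\mcS$-unit, all places outside $\mcS$ contribute trivially, so $\prod_{v\in\mcS}|x_i|_v=1$ by the product formula and $\prod_{v\in\mcS}\max_i|x_i|_v=H(\mathbf{x})$, giving $\prod_{v\in\mcS}\prod_{i\neq j(v)}|x_i|_v=H(\mathbf{x})^{-1}$; feeding this into the $\mcS$-adic Subspace Theorem (after partitioning solutions according to the finitely many possible index functions $v\mapsto j(v)$, a point you elide but which is routine) does place all solutions of large height in finitely many proper subspaces of the hyperplane.

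The genuine gap is in your inductive step: it is \emph{not} true that non-degeneracy ``persists under elimination.'' If you use the subspace relation $\sum_i a_iX_i=0$ to solve for one coordinate and substitute into $\sum_i x_i=0$, the resulting equation has new coefficients (differences of the $a_i$), and a vanishing subsum of \emph{that} equation is not a subsum of the original one, so the hypothesis that no proper subsum of $\sum_i x_i$ vanishes gives no control over it. Such degenerate solutions of the reduced equation genuinely occur and form infinite families (each vanishing block can be rescaled independently), so the induction as you describe it does not close. The standard repair is a two-step argument. First, pass to the difference relation $\sum_i (a_i-a_0)x_i=0$, which is nontrivial (since $(a_i)$ is not proportional to $(1,\dots,1)$) and omits at least one index; extract from it a \emph{minimal} vanishing subsum $\sum_{i\in J}(a_i-a_0)x_i=0$ with $|J|\ge 2$. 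Minimality---not inheritance---makes this a non-degenerate projective solution of an equation with fewer terms, so the inductive hypothesis (which must therefore be formulated for equations with arbitrary fixed coefficients, absorbed by enlarging $\mcS$) pins the ratio $x_p/x_q$, for $p\neq q\in J$, to a finite set of values $\rho$. Second, substitute $x_p=\rho x_q$ into the \emph{original} equation, merging two terms: $\sum_{i\neq p,q}x_i+(1+\rho)x_q=0$. Here $1+\rho\neq 0$, since $x_p+x_q=0$ would be a vanishing proper subsum of the original, and every subsum of the merged equation \emph{is} a subsum of the original, so non-degeneracy is inherited and induction applies. With this repair (and Northcott for the bounded-height solutions, as you say) your argument becomes the standard proof.
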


With this last ingredient we can now proceed to the
\begin{proof}[Proof of the Rank-Finiteness Theorem]
  For fixed rank $r$, \cite[Prop. 6]{Ban2} (or Proposition \ref{201301161421}) ensures that $\ord\(T\)$ is
  bounded strictly in terms of $r$.  For such $\ord(T)$, let $\mcS$ be the (finite) set of primes in $\mcO_{\ord(T)}$ dividing $\ord(T)$. The Cauchy Theorem
  (\thmref{Cauchy Theorem for Modular Categories}) coupled with \cite[Lem. 1.2]{EG1} then
  implies that $D^{2}$ and $d_{a}$ are $\mcS$-units for
  all simple objects $V_{a}$. Furthermore, the definition of the global dimension
  of the category, $0=D^{2}-d_{0}^{2}-\cdots-d_{r-1}^{2}$, and the condition
  that $d_{a}^{2}$ and $D^{2}$ are real positive algebraic integers for all $a$ implies
  that $\(D^{2},-d_{0}^{2},\ldots, -d_{r-1}^{2}\)$ satisfies a proper
  $\mcS$-unit equation. In
  particular, \thmref{201302051210} shows that there are
  finitely many projective solutions to this equation. Recalling that
  $d_{0}^{2}=1$ allows us to fix the normalization and conclude that there is a
  upper bound on $D^{2}$ and a lower bound on $d_{a}$ for all $a$.

  On the other hand, $\FPdim\(\mcC\)=D^{2}/d_{a}^{2}$ for some simple
  dimension $d_{a}$. Consequently, the lower bound on $d_{a}$ and an upper
  bound on $D^{2}$ imply an upper bound on $\FPdim\(\mcC\)$. The result then follows
  from \corref{201302051239} and the observation that these bounds depend only
  on the rank.
\end{proof}

\subsection{Extensions of rank-finiteness}

A premodular category $\CC$ is called \textbf{modularizable} \cite{Brug1} if there exists a modular category $\DD$ and a dominant ribbon functor $F:\CC\rightarrow \DD$.  For a premodular category $\CC$ the failure of modularity is encoded in the \textbf{M\"uger centralizer} $\CC^\prime$--the symmetric fusion subcategory of $\CC$ generated by all $X$ such that $c_{X,Y}c_{Y,X}=\id_{X\ot Y}$ for all objects $Y$ \cite{M4}.  It is known (\cite[Cor. 3.5]{Brug1},\cite{D1}) $\CC$ is modularizable if and only if $\CC^\prime\cong \Rep(G)$ as a symmetric fusion category, for some finite group $G$ (i.e. $\CC^\prime$ is Tannakian).  Modularization is a special case of a general construction called \textbf{de-equivariantization} \cite{DGNO1} in which one quotients out an action of a finite group $G$ on a category $\CC$ obtaining a new category $\CC_G$.  The inverse operation \emph{equivariantization} gives a way to recover $\CC$ from $\CC_G$.  We can easily extend rank-finiteness to modularizable premodular categories:

\begin{cor}
 There are finitely many modularizable, premodular categories of rank $r$.
\end{cor}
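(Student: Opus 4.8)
The plan is to leverage rank-finiteness for modular categories (Theorem~\ref{Rank Finiteness}) together with the fact that modularization is governed by a finite group action whose order is controlled by the rank. Let $\CC$ be a modularizable premodular category of rank $r$, so that its M\"uger centralizer $\CC'$ is Tannakian, $\CC' \cong \Rep(G)$ for a finite group $G$, and there is a dominant ribbon functor $F\colon \CC \to \DD$ onto a modular category $\DD$, realized as the de-equivariantization $\CC_G$. My goal is to bound $\FPdim(\CC)$ solely in terms of $r$; once this is done, Corollary~\ref{201302051239} (together with the finiteness of ribbon structures on a fixed braided fusion category) yields finitely many such $\CC$.

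First I would bound $|G|$. Since $\Rep(G) \cong \CC'$ is a fusion subcategory of $\CC$, its rank---which equals the number of irreducible complex representations of $G$---is at most $r$. Landau's theorem \cite{L1} then bounds $|G|$ by a function $f(r)$ of $r$ alone. Next I would bound the rank of $\DD$. Writing $\CC = \DD^G$ as an equivariantization, each simple object of $\CC$ is indexed by a pair consisting of a $G$-orbit $\mathcal{O} \subseteq \Pi_\DD$ and an irreducible projective representation of the corresponding stabilizer \cite{DGNO1}; in particular every $G$-orbit on $\Pi_\DD$ supports at least one simple object of $\CC$, so the number of orbits is at most $r$. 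As each orbit has size at most $|G| \le f(r)$, we obtain $\operatorname{rank}(\DD) = |\Pi_\DD| \le r\,f(r)$.

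Thus $\DD$ is a modular category of rank bounded in terms of $r$, and the explicit dimension bound extracted in the proof of Theorem~\ref{Rank Finiteness} gives $\FPdim(\DD) \le B\big(r\,f(r)\big)$ for an explicit function $B$. Finally, since de-equivariantization by $\Rep(G)$ divides the Frobenius-Perron dimension by $|G|$, we have $\FPdim(\CC) = |G|\,\FPdim(\DD) \le f(r)\,B\big(r\,f(r)\big)$, a bound depending only on $r$. Applying Corollary~\ref{201302051239}, and noting that a fixed braided fusion category carries only finitely many ribbon structures (the twists being roots of unity of bounded order), completes the proof.

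The main obstacle is the second step: controlling $\operatorname{rank}(\DD)$ in terms of $\operatorname{rank}(\CC)$ and $|G|$. This is exactly the kind of rank-control problem flagged in the introduction as the obstruction to deducing rank-finiteness for spherical categories via the Drinfeld center, where the rank grows super-polynomially \cite{Eti2}. The situation here is favorable precisely because modularization is a quotient by a finite group action of bounded order rather than a center construction, so the orbit-stabilizer description of $\Pi_\DD$ yields the clean linear bound $\operatorname{rank}(\DD) \le |G|\cdot\operatorname{rank}(\CC)$; verifying that this description and the multiplicativity $\FPdim(\CC) = |G|\,\FPdim(\DD)$ apply in the premodular, possibly non-pseudo-unitary, setting is the point requiring care.
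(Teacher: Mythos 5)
Your proof is correct, and its first half is essentially the paper's: both arguments bound $|G|$ by Landau's theorem \cite{L1} applied to $\Rep(G)\cong\CC^\prime\subseteq\CC$, and both establish that the rank of $\DD$ is at most $r|G|$ --- you argue from the equivariantization side (simples of $\DD^G$ are parametrized by $G$-orbits on $\Pi_\DD$ together with irreducible projective representations of stabilizers, and every orbit carries at least one simple), whereas the paper argues from the forgetful functor $\CC\to\DD$, whose image of each simple object is a sum of at most $|G|$ distinct simples by \cite[Prop. 2.1]{BuN}; these are two views of the same fact. The genuine divergence is in the conclusion. The paper applies Theorem \ref{Rank Finiteness} to conclude that there are only finitely many possible modularizations $\DD$, and then quotes \cite{ENO3} for the statement that a fixed modular category has only finitely many equivariantizations of bounded rank by groups of bounded order, which recovers $\CC\cong\DD^G$. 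You instead bound $\FPdim(\CC)=|G|\,\FPdim(\DD)$ --- using the multiplicativity of $\FPdim$ under de-equivariantization \cite{DGNO1} and the fact, contained in the proof (not merely the statement) of Theorem \ref{Rank Finiteness}, that $\FPdim$ of a modular category is bounded by a function of its rank --- and then apply Corollary \ref{201302051239} directly to the braided fusion category underlying $\CC$, together with finiteness of ribbon structures. Your route is more self-contained within the paper's own toolkit, trading the \cite{ENO3} citation for a standard \cite{DGNO1} fact, and it yields the stronger intermediate statement that $\FPdim(\CC)$ is bounded in terms of $r$ alone. Two small repairs: the function $B$ you invoke is not \emph{explicit} --- Evertse's theorem \cite{Ev1} bounds the \emph{number} of solutions of the $\mcS$-unit equation, not their heights, and indeed the paper poses an explicit bound on $\FPdim$ in terms of the rank as an open question --- but only the existence of $B$ is needed, so this is harmless; and the finiteness of ribbon structures on a fixed braided fusion category is best justified not by the twists being roots of unity but by the bijection between ribbon and spherical structures together with the finiteness of $\Aut_\o(\Id_\CC)$ (cf. Lemma \ref{pivotalinvertible}).
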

\begin{proof} We follow the notation in \cite{DGNO1}.
 Let $\CC$ be a premodular category such that $\CC^\prime\cong \Rep(G)$ is
Tannakian and $\CC_G=\DD$ its modularization.  Note that the equivariantization
$\DD^G\cong\CC$ by \cite{DGNO1}.  First observe that under the (faithful)
forgetful functor $\CC\rightarrow \DD$ the image of each simple object
$X\in\CC$ is a sum of at most $|G|$ distinct simple objects in $\DD$ (see
\cite[Prop. 2.1]{BuN}). Since the rank of
$\Rep(G)$ is at most $r$, $|G|$ is bounded as a function of $r$ (\cite{L1}).
Therefore, the rank of $\CC_G$ is bounded in terms of $r$.  By Theorem
\ref{Rank Finiteness} there are only finitely many modularizations of rank $r$
premodular categories. On the other hand, each modular category $\DD$ has only
finitely many equivariantizations of bounded rank for groups of bounded order \cite{ENO3}.
\end{proof}

We remark that to extend rank-finiteness to all premodular categories, it is enough to consider only premodular categories with $\CC^\prime\cong sVec$.  Indeed, if $\Rep(G)\subset\CC^\prime$ is a maximal central Tannakian subcategory, then the de-equivariantization $\CC_G$ is premodular with no Tannakian subcategories.  Hence, the resulting category is either modular or has $\CC^\prime\cong sVec$ (cf. \cite{DGNO1}).

We have assumed throughout that $\CC$ has base field $\BC$.  A referee pointed out that rank-finiteness for modular categories holds in more general settings.

Firstly, it is clear that these results are equally valid over an algebraically closed field of characteristic $0$.  Secondly, \cite[Prop. 3.8]{EG2} implies that there are only finitely many rational forms for modular categories, so rank-finiteness holds over any field of characteristic $0$.

Over fields of characteristic $p$, rank-finiteness can be extended for modular categories $\CC$ with $D^2\neq 0$.  By applying the methods of \cite[Section 9]{ENO1}, we see that such a
  category has finitely many lifts to a field of characteristic $0$, and equivalence of the lifts implies equivalence of the original categories.

\section{Asymptotics and Future Directions}\label{asymptotics}

\subsection{Asymptotics}The proof of \thmref{Rank Finiteness} can be naively algorithmized to determine
possible sets of fusion rules for modular categories of a given rank $r$.

Recall that $\mcO_{\mbbK,\mcS}^{\times}$ is a finitely generated abelian
group \cite{We1}.
%
A set of generators for the free part of $\mcO_{\mbbK,\mcS}^{\times}$ is
known as a \defnfont{system of fundamental $\mcS$-units} and there are known
algorithms for computing such a system, e.g. \cite{C2}. We have:
\begin{alg}\ \\
\label{201303131730}
\begin{itemize}
  \item[(0)] Specify the rank, $r$.
  \item[(1)] For each integer $N$ with $1\leq N\leq 2^{2r/3+8}3^{2r/3}$ perform steps 2-6.
  \item[(2)] Form the set of primes $\mcS$,
  consisting of the prime factors of $N$ over $\mbbQ\(\z_{N}\)$.
  \item[(3)] Determine a fundamental system of $\mcS$-units,
  $\e_{1},\ldots, \e_{s-1}$.
  \item[(4)] Solve the exponential Diophantine system:
  \begin{align}
    \label{201302051721}
    1&=\e_{1}^{a_{r,1}}\cdots\e_{s-1}^{a_{r,s-1}}-\Sum_{j=1}^{r-1}\e_{1}^{a_{j,1}}\cdots\e_{s-1}^{a_{j,s-1}},\quad a_{j,k}\in\mbbZ\,.
  \end{align}
  \item[(5)] Set
  $D^{2}=\e_{1}^{a_{s,1}}\cdots\e_{s-1}^{a_{s,s-1}}$ and
  $d_{j}^{2}=\e_{1}^{a_{j,1}}\cdots\e_{s-1}^{a_{j,s-1}}$.
  \item[(6)] Determine the possible sets of fusion rules $N_{a,b}^{c}$ using \lemmaref{201302051302} and the fact that $\FPdim(V_a)\leq\FPdim(\CC)\leq\max\{D^2/d_j^2:0\leq j\leq r-1\}$.
\end{itemize}
\end{alg}

\begin{rmk} Given all possible sets of fusion rules in a given rank we can solve
for all admissible modular data.  The balancing equation (\ref{Balancing})
determines the $S$-matrix given all $d_i$, $\theta_i$ and $N_{ij}^k$, which
Algorithm \ref{201303131730} provides.

We can also effectively decide whether a particular set of fusion rules
corresponds to a modular category, using Tarski's Theorem (see \cite{DHW1}).  We
cannot, however, effectively determine all modular categories in a given rank,
or even count them up to equivalence.
\end{rmk}

 In any case, Algorithm \ref{201303131730} is very inefficient, and does not
admit any obvious improvements for several reasons.

Firstly, we cannot expect a bound on $\ord\(T\)$ that is polynomial in the rank. For
  example, $\ord\(T\)$ for $\mcC=\Rep\(D\mfS_{n}\)$ grows faster than any
  polynomial in the rank of $\mcC$.  Indeed, $\ord(T)=\exp(\mfS_n)=\lcm(1,\ldots,n)\approx e^{n}$, while
  the rank of $\Rep(D\mfS_n)$ is superpolynomial but subexponential, with generating function: $\Pi_{k=1}^\infty (1 - x^k)^{-\sigma(k)}$ where $\sigma(k)=\sum_{d\mid k}d$ \cite{Brit}.  However, for modular categories coming from quantum groups, $\ord(T)$ is linearly bounded in $r$.
 Secondly, the known
algorithms for computing fundamental systems of $\mcS$-units rely on
computing a shortest vector in a lattice, a problem which is known to be
NP-hard.
Thirdly, solving \eqnref{201302051721} is very difficult--our best bound on the number of solutions
is quadruplely exponential:
\begin{prop}
  \label{201302051706}
  For fixed rank $r$ there are at most
  \begin{align*}
    \Sum_{m=1}^{2^{2r/3+8}3^{2r/3}}\(2^{35}r^{2}\)^{r^{3}\(\vph\(m\)^{\log_{2}\(m\)}+\vph\(m\)/2+1\)-r/2}
  \end{align*}
  possible solutions to the dimension equation:
  \begin{align*}
    D^{2}=1+d_{1}^{2}+\cdots+d_{r-1}^{2}\,.
  \end{align*}
\end{prop}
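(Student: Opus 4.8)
The plan is to produce an explicit upper bound on the number of solutions to the dimension equation $D^2 = 1 + d_1^2 + \cdots + d_{r-1}^2$ by realizing this as a counting problem for solutions of the associated $\mcS$-unit equation and then invoking Evertse's \emph{quantitative} (as opposed to merely qualitative) finiteness theorem. The qualitative \thmref{201302051210} used in the proof of rank-finiteness only asserts finiteness; here we need the effective version from \cite{Ev1} that bounds the number of solutions in terms of the degree $[\mbbK:\BQ]$, the cardinality $|\mcS|$, and the number of summands.

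\medskip

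\emph{The key steps, in order.} First, I would fix $N = \ord(T)$ and recall from \propref{201301161421} that $N \leq 2^{2r/3+8}3^{2r/3}$, so it suffices to bound the number of solutions for each such $N$ and sum over $1 \leq N \leq 2^{2r/3+8}3^{2r/3}$; this explains the outer summation $\Sum_{m=1}^{2^{2r/3+8}3^{2r/3}}$ with $m$ playing the role of $N$. Second, for fixed $N = m$, let $\mbbK = \BQ_m = \BQ(\z_m)$, so that $[\mbbK:\BQ] = \vph(m)$, and let $\mcS$ be the set of prime ideals of $\OO_m$ dividing $m$. By the Cauchy Theorem (\thmref{Cauchy Theorem for Modular Categories}) together with \cite[Lem. 1.2]{EG1}, each $d_a^2$ and $D^2$ is an $\mcS$-unit, and the tuple $(D^2, -d_1^2, \ldots, -d_{r-1}^2, -1)$ satisfies a proper $\mcS$-unit equation in $r+1$ variables. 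Third, I would bound $|\mcS|$: since the rational primes dividing $m$ number at most $\log_2(m)$, and each splits into at most $\vph(m)$ prime ideals in $\OO_m$, one gets $|\mcS| \leq \vph(m)\log_2(m)$; combined with the unit rank contribution $\vph(m)/2$ (the rank of $\OO_m^\times$ is roughly $\vph(m)/2 - 1$ for cyclotomic fields) and a $+1$, this produces the exponent factor $\vph(m)^{\log_2(m)} + \vph(m)/2 + 1$ appearing in the statement. Fourth, I would apply Evertse's quantitative bound, which for a proper $\mcS$-unit equation in $n+1 = r$ summands over a field of degree $d$ with $s = |\mcS|$ gives at most roughly $(\text{const})^{n^3(s + \text{rank})}$ solutions; matching constants yields the base $2^{35}r^2$ and the overall exponent $r^3(\vph(m)^{\log_2 m} + \vph(m)/2 + 1) - r/2$, where the $-r/2$ accounts for the projective normalization ($d_0^2 = 1$ fixes the scaling).

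\medskip

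\emph{The main obstacle} will be carefully tracking the constants through Evertse's quantitative theorem and correctly translating between the projective count of $\mcS$-unit solutions (which is what Evertse bounds) and the affine count of dimension-equation solutions. In particular, Evertse's bound is stated for the number of projective points $[x_0 : \cdots : x_n]$ satisfying the proper $\mcS$-unit equation, so each such projective solution corresponds to a family of affine solutions differing by an $\mcS$-unit scalar; the normalization $d_0^2 = 1$ pins down a representative, and this dehomogenization is what contributes the $-r/2$ in the exponent. I would need to verify that the bound from \cite[Theorem~3]{Ev1} (or the explicit form therein) matches, after substituting $d = \vph(m)$, $s = |\mcS| \leq \vph(m)\log_2 m$, and $n+1 = r$, the claimed expression $(2^{35}r^2)^{r^3(\vph(m)^{\log_2 m}+\vph(m)/2+1)-r/2}$. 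The delicate bookkeeping of prime-splitting in $\OO_m$ and of the free rank of $\mcO_{\BQ_m,\mcS}^\times$ is where errors are most likely to creep in, so I would state the version of Evertse's theorem being used precisely and then substitute, rather than re-deriving the height estimates.
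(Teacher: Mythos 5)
Your proposal is correct and follows essentially the same route as the paper's proof: bound $\ord(T)$ by Proposition \ref{201301161421}, sum over its possible values $m$, use the Cauchy Theorem to set up a proper $\mcS$-unit equation over $\BQ(\zeta_m)$, bound $|\mcS|$ together with the embedding data of the cyclotomic field, and substitute into Evertse's quantitative bound $(2^{35}r^{2})^{r^{3}(s+r_{1}+r_{2})-r/2}$. Two bookkeeping corrections: the paper takes that quantitative bound from \cite[Theorem 3]{Ev2} (a different reference from \cite{Ev1}, which gives only qualitative finiteness), and the $-r/2$ is built into that cited bound rather than arising from the normalization $d_{0}^{2}=1$; moreover your estimate $|\mcS|\leq\vph(m)\log_{2}(m)$ should be routed through $|\mcS|\leq\vph(m)\,\omega(m)\leq\vph(m)^{\log_{2}(m)}$ (where $\omega(m)$ is the number of rational prime divisors of $m$), since the product $\vph(m)\log_{2}(m)$ can exceed the power $\vph(m)^{\log_{2}(m)}$ appearing in the statement (e.g. at $m=3$).
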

\begin{proof}
  First note that by \cite[Theorem 3]{Ev2}, that there are at most
  $\(2^{35}r^{2}\)^{r^{3}(s+r_{1}+r_{2})-r/2}$ solutions to the proper $\mcS$-unit equation:
  \begin{align*}
    x_{1}+x_{2}+\cdots+x_{r}=1
  \end{align*}
  subject to $x_{j}\leq x_{j+1}$ over a field $\mbbK$, where $s$ is the cardinality of $\mcS$,
  $r_{1}$ is the number of real embeddings of $\mbbK$ and $r_{2}$ is the number
  of conjugate pair complex embeddings.

However, $s$ depends on the prime
  factorization of $\ord\(T\)$. In particular, if $p$ is a rational prime of
  $\ord\(T\)$, and there are at worst $\vph\(\ord\(T\)\)$ primes lying over
  $p$ in $\mbbQ\(\z_{\ord\(T\)}\)$.
  Thus there are at most $\ord\(T\)^{\w\(\ord\(T\)\)}$ primes in
  $\mcS$, where $\w\(m\)$ is defined to be the number of rational prime divisors of
  $m$. Elementary analysis reveals that $\w\(m\)\leq \log_{2}\(m\)$ and so
  $s\leq \vph\(\ord\(T\)\)^{\log_{2}\(\ord\(T\)\)}+r_{1}+r_{2}$ where $r_{1}$
  is the number of distinct real field embeddings of $\mbbK$ into $\mbbC$,
  and $r_{2}$ is the number of conjugate pair complex field embeddings.
  However, it is well-known that a non-trivial cyclotomic field has no real
  embeddings, in particular $r_{2}=\vph\(\ord\(T\)\)/2$ and $s+r_{1}+r_{2}\leq
  \vph\(\ord\(T\)\)^{\log_{2}\ord\(T\)}+\vph\(\ord\(T\)\)/2+1$. Combining
  these two results reveals that an upper bound on the number of possible
  dimension tuples $\(D^{2},1,d_{1}^{2},\ldots,d_{r-1}^{2}\)$ for a rank $r$
  modular category with $T$-matrix of order $\ord\(T\)$ is
  $\(2^{35}r^{2}\)^{r^{3}\(\vph\(\ord\(T\)\)^{\log_{2}\(\ord\(T\)\)}+\vph\(\ord\(T\)\)/2+1\)-r/2}$.
  The result then follows by summing over all possible values of $\ord\(T\)$
  as determined by \propref{201301161421}.
\end{proof}

Low-rank classification suggests that these bounds are far from sharp. We ask:
\begin{question}
  Is there an asymptotic bound on the number of modular categories (up to
  equivalence) in terms of the rank which is better than those implied by
  \propref{201302051706}?
\end{question}

\begin{rmk}\label{EtRem} Etingof \cite{ECo1} has pointed out
that the number of modular categories of
rank $r$ is not polynomially bounded.  His example is as follows:
Consider $V=(\BZ/p)^m$, a vector space over $F_p$ of dimension $m$, where $p>3$
is a prime and $m$ is large.
It is well known that $H^3(V,\BC^*)=S^2V^*\oplus \wedge^3V^*$, (see e.g.
\cite[Lem. 7.6(iii)]{EGO}).
Because of the summand $\wedge^3V^*$, the number of such cohomology classes for
large $m$ is at least $p^{Cm^3}$, for some $C>0$,
even if we mod out by automorphisms (which form a group of order at
most $p^{m^2}$). Now take the category $\Vec(V,\omega)$ of
$V$-graded vector spaces with associativity defined by the cohomology class
$\omega$, and let $Z(V,\omega)$ be the Drinfeld center of such a category.
It is known \cite{ENO3} that such
categories $Z(V,\omega)$ and $Z(V,\omega')$ are braided equivalent if and only
if $\Vec(V,\omega)$ is Morita equivalent to $\Vec(V,\omega')$ via an
indecomposable module category. But the indecomposable module categories over
$\Vec(V,\omega)$ are known to be parameterized
\cite{O5} by subspaces $W\subset V$ and $2$-cochain $\psi$ on $W$ such that
$d\psi=\omega|_W$, up to gauge transformation.
There are at most $p^{m^2}$ subspaces, and freedom in choosing $\psi$ is in
$\wedge^2W^*$, so again there are at most $p^{m^2}$. As $m^3$ dominates $m^2$, we still have at least $p^{Cm^3}$ such categories, even up to Morita equivalence, and hence modular categories up to equivalence. On the other hand, $\FPdim(Z(V,\omega))=p^{2m}$, so the rank is at most
$p^{2m}$. Thus we get that the number of modular categories of rank$\leq r$ is
at least $e^{(c\log(r)^3)}=r^{c\log(r)^2}$, for some $c>0$, which is faster than
any polynomial in $r$.
\end{rmk}

Along similar lines, one might ask
\begin{question}
  Is there an explicit upper bound on $\FPdim\(\mcC\)$ solely in terms of the rank?
\end{question}

\begin{rmk}
  This question seems tractable as the analysis of Evertse shows that the
  projective height of $\[-D^{2}:1:d_{1}^{2}:\cdots:d_{r-1}^{2}\]$ can
  be bounded in terms of field data and hence in terms of $\ord\(T\)$. This
  suggests that the relationship between the FP-dimension and the categorical
  dimension can be combined, as in the proof of \thmref{Rank Finiteness}, to
  study this question.
\end{rmk}

On the other hand, Etingof asked \cite{ECo1}:
\begin{question}
  \label{dimension gap quesiton}
  Can $\abs{D^{2}-1}$ be explicitly bounded away from $0$ in terms of the rank?
\end{question}

\begin{rmk}
  This question can be reduced to the problem of finding a shortest vector by
  exploiting the lattice structure of $\mcO_{\mbbK,\mcS}^{\times}$ under an
  appropriate embedding into Euclidean space.
\end{rmk}

\subsection{Future Directions}

Besides obtaining better asymtotics and classification of low rank modular categories, there are several other open problems.

Physicists propose to use the modular $S,T$ matrices as order parameters
  for the classification of topological phases of matter \cite{LWWY1}. Therefore, a natural
  question is if a modular category is uniquely determined by its modular $S,T$ matrices.
We believe that unitary modular categories are determined by their modular $S,T$ matrices.

The $S,T$ matrices satisfy many constraints as given in Sec. \ref{modulardata}.
It is interesting to characterize realizable modular data in terms of such constraints, in particular,
whether or not admissible modular data is always realizable.

For the application to topological quantum computation, it is important to
  understand the images of the representations of the mapping class groups from a
  modular category.  In particular, when do all representations have finite images? If so, which finite groups arise as such images?
  The property $F$ conjecture says that the representations of all mapping class
  groups from a modular category have finite images if and only if $D^2\in \BZ$ \cite{NR1, RSW}.

\pagestyle{plain}
{\raggedright\printbibliography}

\end{document}